%%%%%%ARXIV 
\documentclass[10pt]{article}
\usepackage[a4paper, total={5.5in, 8in}]{geometry}
\usepackage[numbers]{natbib}
%%%%
\usepackage{amsthm,mathrsfs,amsmath,amsfonts,amssymb,color,enumitem,tensor,mathtools,graphicx,epstopdf,pgfplotstable,tikz,subcaption,xcolor,comment}
\usepackage[breaklinks,bookmarks=false]{hyperref}
\hypersetup{colorlinks, linkcolor=blue, citecolor=blue,
urlcolor=blue, plainpages=false, pdfwindowui=false,
pdfstartview={FitH}}
\usepackage[only,llbracket,rrbracket,llparenthesis,rrparenthesis]{stmaryrd} 
\usepackage{bm}
\setlength{\marginparwidth}{2cm}
\usepackage{adjustbox}
\usepackage{bm}

\usepackage[medium,compact]{titlesec}
\titleformat*{\section}{\large\bfseries}
\titleformat*{\subsection}{\bfseries}

\numberwithin{equation}{section}

\newtheorem{theorem}{Theorem}[section]{\bfseries}{\it}
{\bfseries}{\it}
\newtheorem{lemma}[theorem]{Lemma}{\bfseries}{\it}
\newtheorem{corollary}[theorem]{Corollary}{\bfseries}{\it}
\newtheorem{definition}{Definition}[section]{\bfseries}{\it}
\newtheorem{eg}{Example}[section]{\bfseries}{\rmfamily}
{\bfseries}{\it}
\theoremstyle{definition}
\newtheorem{remark}{Remark}[section]{\bfseries}{\rmfamily}
\newcommand{\proofbox}{\qed}

\newcommand{\timeVkforward}{\mathbb{V}_{k}^{+}}
\newcommand{\timeVkbackward}{\mathbb{V}_{k}^{-}}
\newcommand{\LOm}{L^2(\Omega)}
\newcommand{\Dp}{\mathcal{D}_p}

\newcommand{\R}{\mathbb{R}}
\newcommand{\N}{\mathbb{N}}

\newcommand{\jump}[1]{\llbracket #1 \rrbracket}

\newcommand{\dd}{\mathrm{d}}
\renewcommand{\dim}{d}
\newcommand{\p}{\partial}
\newcommand{\abs}[1]{\lvert#1\rvert} 
\newcommand{\tends}{\rightarrow}
\newcommand{\norm}[1]{\lVert#1\rVert}
\newcommand{\tak}{\tilde{a}_k}
\newcommand{\TAK}[1]{\tilde{a}_{k,#1}}
\newcommand{\dualk}{k,*}
\newcommand{\Adualk}{A_\dualk}
\newcommand{\Ip }{\mathcal{I}_{+}}
\newcommand{\In}{\mathcal{I}_{-}}
\newcommand{\Omk}{\Omega,k}
\newcommand{\Omkj}{\Omega,k_j}
\newcommand{\QT}{Q_T}
\newcommand{\DpH}{\mathcal{D}_p H}
\newcommand{\Vk}{\mathbb{V}_k}
\newcommand{\Vkp}{\timeVkforward}
\newcommand{\Vkm}{\timeVkbackward}
\newcommand{\Vkpm}{\mathbb{V}_{k}^{\pm}}
\newcommand{\calVk}{\mathcal{V}_k}
\newcommand{\calVki}{\mathcal{V}_{k,\Omega}}
\newcommand{\Tk}{\mathcal{T}_k}
\DeclareMathOperator{\Card}{card}
\DeclareMathOperator{\supp}{supp}
\DeclareMathOperator{\diam}{diam}
\newcommand{\Ipm}{\mathcal{I}_{\pm}}
\newcommand{\su}{\underline{\sigma}}
\newcommand{\so}{\overline{\sigma}}
\newcommand{\rewrite}[1]{}
\renewcommand{\dim}{d}
\newcommand{\Nk}{N_k}
\newcommand{\calE}{\mathcal{E}}
\newcommand{\calEK}{\calE_K}
\newcommand{\Tke}{\mathcal{T}_{k,E}}
\newcommand{\calVkx}{\mathcal{V}_{k,i}}
\newcommand{\Fki}{F_{K,i}}
\newcommand{\Fkj}{F_{K,j}}
\newcommand{\wEk}{{\omega_{k,E}}}
\newcommand{\wtEk}{{\omega_{k,\widetilde{E}}}}
\newcommand{\Dk}{{D_k}}

\newcommand{\calEki}{\calE_{k,\Omega}}
\newcommand{\Xk}{\mathbb{V}_k,A_k}
\newcommand{\Yk}{{\Vkp,A_k}}

\newcommand{\Vkpo}{\mathbb{V}_{k,0}^{+}}

%%% START MACRO FOR ANNOTATION OF TRIANGLE WITH SLOPE %%%.
\newcommand{\logLogSlopeTriangle}[5]
{
	% #1. Relative offset in x direction.
	% #2. Width in x direction, so xA-xB.
	% #3. Relative offset in y direction.
	% #4. Slope d(y)/d(log10(x)).
	% #5. Plot options.
	
	\pgfplotsextra
	{
		\pgfkeysgetvalue{/pgfplots/xmin}{\xmin}
		\pgfkeysgetvalue{/pgfplots/xmax}{\xmax}
		\pgfkeysgetvalue{/pgfplots/ymin}{\ymin}
		\pgfkeysgetvalue{/pgfplots/ymax}{\ymax}
		
		% Calculate auxilliary quantities, in relative sense.
		\pgfmathsetmacro{\xArel}{#1}
		\pgfmathsetmacro{\yArel}{#3}
		\pgfmathsetmacro{\xBrel}{#1-#2}
		\pgfmathsetmacro{\yBrel}{\yArel}
		\pgfmathsetmacro{\xCrel}{\xArel}
		%\pgfmathsetmacro{\yCrel}{ln(\yC/exp(\ymin))/ln(exp(\ymax)/exp(\ymin))} % REPLACE THIS EXPRESSION WITH AN EXPRESSION INDEPENDENT OF \yC TO PREVENT THE 'DIMENSION TOO LARGE' ERROR.
		
		\pgfmathsetmacro{\lnxB}{\xmin*(1-(#1-#2))+\xmax*(#1-#2)} % in [xmin,xmax].
		\pgfmathsetmacro{\lnxA}{\xmin*(1-#1)+\xmax*#1} % in [xmin,xmax].
		\pgfmathsetmacro{\lnyA}{\ymin*(1-#3)+\ymax*#3} % in [ymin,ymax].
		\pgfmathsetmacro{\lnyC}{\lnyA+#4*(\lnxA-\lnxB)}
		\pgfmathsetmacro{\yCrel}{\lnyC-\ymin)/(\ymax-\ymin)} % THE IMPROVED EXPRESSION WITHOUT 'DIMENSION TOO LARGE' ERROR.
		
		% Define coordinates for \draw. MIND THE 'rel axis cs' as opposed to the 'axis cs'.
		\coordinate (A) at (rel axis cs:\xArel,\yArel);
		\coordinate (B) at (rel axis cs:\xBrel,\yBrel);
		\coordinate (C) at (rel axis cs:\xCrel,\yCrel);
		
		% Draw slope triangle.
		\draw[#5]   (A)-- node[pos=0.5,anchor=north] {1}
		(B)-- 
		(C)-- node[pos=0.5,anchor=west] {$1$}
		cycle;
	}
}
%%% END MACRO FOR ANNOTATION OF TRIANGLE WITH SLOPE %%%.
%---------------------------------------------------------------------------------

%%% START MACRO FOR ANNOTATION OF TRIANGLE WITH SLOPE %%%.

%%% END MACRO FOR ANNOTATION OF TRIANGLE WITH SLOPE %%%.

 \title{Finite element approximation of time-dependent mean field games with nondifferentiable Hamiltonians}
  \author{Yohance A. P. Osborne\footnotemark[1]~ and Iain Smears\footnotemark[2]}

\begin{document}

\maketitle

 \renewcommand{\thefootnote}{\fnsymbol{footnote}}

\footnotetext[1]{Department of Mathematical Sciences, Durham University, Stockton Road, DH1 3LE Durham, United Kingdom (\texttt{yohance.a.osborne@durham.ac.uk}).}
\footnotetext[2]{Department of Mathematics, University College London, Gower
	Street, WC1E 6BT London, United Kingdom (\texttt{i.smears@ucl.ac.uk}).}

\begin{abstract}
The standard formulation of the PDE system of Mean Field Games (MFG) requires the differentiability of the Hamiltonian. However in many cases, the structure of the underlying optimal problem leads to a convex but nondifferentiable Hamiltonian.
For time-dependent MFG systems, we introduce a generalization of the problem as a Partial Differential Inclusion (PDI) by interpreting the derivative of the Hamiltonian in terms of the subdifferential set.
In particular, we prove the existence and uniqueness of weak solutions to the resulting MFG PDI system under standard assumptions in the literature.
We propose a monotone stabilized finite element discretization of the problem, using conforming affine elements in space and an implicit Euler discretization in time with mass-lumping.
We prove the strong convergence in $L^2(H^1)$ of the value function approximations, and strong convergence in $L^p(L^2)$ of the density function approximations, together with strong $L^2$-convergence of the value function approximations at the initial time.
\end{abstract}

\section{Introduction}\label{sec1}
Mean Field Games (MFG), which were introduced by Lasry and Lions \cite{lasry2006jeux,lasry2006jeux1,lasry2007mean} and independently by Huang, Caines and Malham\'e \cite{huang2006large}, describe the Nash equilibria of large numbers of players involved in a game of stochastic optimal control. 
This leads to a system of a Hamilton--Jacobi--Bellman (HJB) equation  coupled with a  Kolmogorov--Fokker--Planck (KFP) equation, where the unknowns are the value function of the underlying stochastic optimal control problem, and the density function of the player distribution within the state space of the game.
 For extensive reviews on the theory and applications of MFG problems, we refer the reader to \cite{gomes2016regularity,achdou2020mean,GueantLasryLions2003,GomesSaude2014}.

We consider here the model problem
\begin{equation}\label{mfg-classical}
	\begin{aligned}
		-\partial_tu- \nu\Delta u+{H}(t,x,\nabla u)&={F}[m](t,x) &&\text{ in }(0,T)\times\Omega, 
		\\
		\partial_tm-\nu\Delta m -  \mathrm{div}\left(m \frac{\partial H}{\partial p}(t,x,\nabla u)\right) &= G(t,x) &&\text{ in }(0,T)\times\Omega, 
	\\
m(0,x)=m_0(x),\quad u(T,x)&=S[m(T,\cdot)](x)&&\text{ on }\Omega, 
		\\
m=0, \qquad u&=0 &&\text{ on } (0,T)\times\partial\Omega, 
	\end{aligned}
\end{equation}
where $\Omega\subset\R^\dim$, {$\dim\geq 2$,} denotes a bounded polyhedral open connected set with Lipschitz boundary $\partial\Omega$, $T>0$ is the time horizon, and $\nu>0$ is constant. The assumptions on the coupling $F$, the terminal cost $S$, the source term $G,$ and the initial density $m_0$ are specified in Section~\ref{sec2} below. We note from the onset that we allow for wide classes of both local and nonlocal operators $F$ and $S$.
The Dirichlet boundary conditions and the source term $G$ arise in models where players may enter or exit the game.
The Hamiltonian $H$ in \eqref{mfg-classical} is determined by the underlying optimal control problem, and is defined by
\begin{equation}\label{H-time-homogeneous}
	{H}(t,x,p):=\sup_{\alpha\in\mathcal{A}}\left({b}(t,x,\alpha)\cdot p-{f}(t,x,\alpha)\right),
\end{equation}
where $\mathcal{A}$ is the control set, where $b:[0,T]\times\overline{\Omega}\times \mathcal{A}\to\R^\dim$ is the opposite control-dependent drift, and $f:[0,T]\times\overline{\Omega}\times\mathcal{A}\to\mathbb{R}$ is a control-dependent running-cost component. Note that $p\mapsto H(t,x,p)$ is convex on $\mathbb{R}^d$. 
It is well-known in the optimal control literature that in many cases, the optimal controls are not necessarily unique, and are often of \emph{bang-bang} type. 
In these cases the Hamiltonian $H$ may be nondifferentiable, thus raising the question of how to interpret the KFP equation in~\eqref{mfg-classical} when $\frac{\partial H}{\partial p}$ is not well-defined as a function.
From a modelling perspective, this connects to the question as to how players of the game should choose among multiple optimal controls, and how this determines the evolution of the player density.

Whereas existing works on the analysis of MFG often assume that the Hamiltonian is differentiable or even $C^1$-regular with respect to the gradient, there are currently only a handful of works on the nondifferentiable case. In particular the special case of Hamiltonians of eikonal type, but also possibly dependent on $m$, is studied in~\cite{ducasse2020second}.
In~\cite{osborne2022analysis,osborne2024erratum}, we analysed the steady-state analogue of~\eqref{mfg-classical}, in the setting of Lipschitz continuous, but not necessarily differentiable Hamiltonians. We showed that the KFP equation in~\eqref{mfg-classical} can be generalized to a \emph{Partial Differential Inclusion} (PDI) which, in the current setting, is
\begin{equation}
\begin{aligned}
\partial_t m-\nu\Delta m - G \in \mathrm{div}\left( m\partial_p H(t,x,\nabla u )\right)  &&&\text{in }(0,T)\times\Omega, 
\end{aligned}
\end{equation}
where $\partial_pH$ denotes the Moreau--Rockafellar pointwise partial subdifferential of $H$ w.r.t.\ $p$. 
From a modelling perspective, the PDI corresponds to the natural intuition that,  in order to maintain a Nash equilibrium, players in the same state might be required to make distinct choices of optimal controls relative to one another. This entails that the drift term for the density is expected to be a (weighted) average of the chosen drift directions of the individual players, which can be shown to be an element of the subdifferential of the Hamiltonian, {see also \cite{osborne2024regularization} for a heuristic derivation.}
In terms of the analysis, we showed in~\cite{osborne2022analysis,osborne2024erratum} that, for steady-state problems, the MFG PDI system admits a notion of weak solution, with the existence of a solution guaranteed under very mild assumptions on the data, along with uniqueness of the solution guaranteed under a similar monotonicity condition on the coupling as made in~\cite{lasry2007mean}. {Moreover, in \cite{osborne2024regularization} we analysed the connection between MFG PDI systems and their classical PDE counterparts via regularization of the Hamiltonians.}

Our first main contribution in this work is to develop the analysis of the MFG PDI in the time-dependent setting, in extension of our work on the elliptic problem in~\cite{osborne2022analysis,osborne2024erratum}. We show the existence of a weak solution to the problem, and we show that the uniqueness of the weak solution holds under standard monotonicity conditions on the problem data.
 Note that it is known from examples in~\cite{BardiFischer19} that uniqueness of the solution does not necessarily hold if the couplings are not monotone.
The second main contribution of this work is the design and analysis of a monotone Finite Element Method (FEM) of the MFG PDI system.
For problems with differentiable Hamiltonians, a range of numerical methods have been proposed and studied.
This includes monotone finite difference methods on Cartesian grids~\cite{achdou2010mean,achdou2013mean,achdou2016convergence,bonnans2022error}, methods based on optimization reformulations~\cite{AriasKaliseSilva2018}, as well as semi-Lagrangian scheme~\cite{CarliniSilva14,CarliniSilve15}. {In addition, in \cite{osborne2024near} we established the near and full quasi-optimality of a monotone stabilized FEM for stationary MFG PDE systems with ${C}^{1,1}$ Hamiltonians and derived results on optimal convergence rates for the method.}
For problems with nondifferentiable Hamiltonians, the only work so far is~\cite{osborne2022analysis,osborne2024erratum} which covers the steady-state case.

The monotone FEM that we propose here involves continuous piecewise affine finite element spaces for the spatial discretization, and an implicit Euler with mass-lumping for the temporal discretization.
The discretized system uses the exact Hamiltonian and its subdifferential, which greatly helps the consistency and convergence analysis. 
The first-order terms of the operators are stabilized in order to satisfy a Discrete Maximum Principle (DMP) and ensure the positivity of the density approximations, and also to maintain stability in the small diffusion regime.
Although there exist many choices of stabilization to achieve a DMP for FEM \cite{ciarlet1973maximum,tabata1977finite,MIZUKAMI1985181,burman2002nonlinear,burman2005stabilized,barrenechea2017edge,barrenechea2018unified}, we construct an original volume-based stabilization that has a simple analytical form. This helps to simplify the stability analysis in some discrete Bochner--Sobolev norms, and it is also easy to implement in practice.
The analysis allows for unstructured shape-regular simplicial meshes in arbitrary dimensions, requiring only the standard Xu--Zikatanov condition~\cite{xu1999monotone}. This is not restrictive in practice, as it allows for many examples of meshes with nonacute elements, local refinements, etc.
There is no restriction linking the time-step size to the mesh-size, and there is also no restriction on the spatial mesh-grading.

We show that the existence and uniqueness of the discrete approximations hold under the same assumptions as for their continuous counterparts, see Theorems~\ref{discrete-mfg-existence} and \ref{discrete-mfg-uniqueness} below.
We aim here to show the basic convergence of the method without requiring stronger assumptions on the problem data or on the regularity of the exact solution. This is motivated by the fact that examples in~\cite{osborne2022analysis,osborne2024erratum} show that the solution regularity can be quite limited, even in the interior of the domain.
Our main convergence result is in Theorem \ref{conv-main-thm}, which shows the convergence of numerical approximations under the same assumptions on the data as required for the uniqueness results.
In particular, we prove strong convergence of the value function approximations to $u$ in $L^2(0,T;H_0^1(\Omega))$ and the strong convergence of the density function approximations to $m$ in $L^p(0,T;L^2(\Omega))$ for any $p\in [1,\infty)$, along with weak convergence to $m$ in $L^2(0,T;H_0^1(\Omega))$. 
These results do not require any additional regularity assumptions on the solution.
The overall strategy of the analysis is to use the uniform boundedness of the numerical solutions in discrete Bochner--Sobolev norms, including negative-order norms for the time derivatives, to extract subsequences with suitable weak convergence properties, and with suitable strong convergence properties by discrete compact embedding results. Then we use the stability and consistency of the discretization to augment weak convergence to strong convergence in the appropriate norms.
  One of the key challenges that we address here is the problem of showing that the limit of the approximations satisfies the nonlinear final time coupling $u(T,x)=S[m(T,\cdot)](x)$, which we handle by showing strong convergence in $L^2(\Omega)$ of the density approximations at the final time $T$ through a duality argument and stability bounds in weighted discrete Bochner--Sobolev norms.

The paper is organized as follows.
{In Section~\ref{sec2} we describe the basic notation and setting of this work.} The definition the MFG PDI generalization and its notion of weak solution, along with the main theorems on existence and uniqueness,  are presented in Section~\ref{sec3}.
 Section~\ref{sec4} introduces the setting for discretization, including the mass-lumping and spatial stabilization.
The FEM is then introduced in Section \ref{sec5}, along with the statements of the theorems on existence, uniqueness, and convergence of the discrete approximations.
Section~\ref{sec6} gives the proof of the existence, uniqueness and stability of the numerical methods. Section~\ref{sec7} details the discrete functional analytic compactness tools in a self-contained exposition. The proof of convergence of the method is shown in~Section~\ref{sec8:proofs_of_convergence}.
We present the results of a numerical experiment in~Section~\ref{sec9}.
Appendices~\ref{sec:app:dmp} and~\ref{sec:app:discrete_wellposedness} give proofs of some of the supporting results, such as the proof of the DMP under the proposed stabilization.

\section{Setting and notation}\label{sec2}
{\subsection{Basic notation}}
Let $\Omega$ be a bounded, open connected subset of $\R^\dim$ with Lipschitz boundary $\partial \Omega$.
We use the following notation for $L^2$-norms {and inner products}: for a Lebesgue measurable set $\omega \subset \R^\dim$, $d\geq 2$, let $\lVert \cdot \rVert_\omega$ and $(\cdot,\cdot)_{\omega}$denote respectively the standard $L^2$-norm and $L^2$-inner product for scalar- and vector-valued functions on $\omega$. 

Given $1\leq p,q <\infty$, let $L^p(0,T;L^q(\Omega))$ denote the space of mappings $w:(0,T)\to L^q(\Omega)$ that are strongly measurable (see \cite[Ch.\ V]{yosida1980functional}) with $\int_0^T\|w\|_{L^q(\Omega)}^p\mathrm{d}t<\infty$. In particular, we denote the norm on $L^2(0,T;L^2(\Omega))$ by  $\|\cdot\|_{Q_T}$ where $Q_T\coloneqq (0,T)\times\Omega$. When using notation for $L^2$-norms and the $L^2$-inner product, the dependence of functions on the state variable $x\in\Omega$ will be suppressed. It is known  that the spaces $L^p(Q_T)$ and $L^p(0,T;L^p(\Omega))$ are isometrically isomorphic to each other for every $1\leq p<\infty$ {(see \cite[Ex.\ 5.0.32]{fattorini1999infinite}).}
As such, when $1\leq p<\infty$, we identify the spaces $L^p(Q_T)$ and $L^p(0,T;L^p(\Omega))$.
Note that $L^{\infty}(Q_T)$ is not isomorphic to $L^{\infty}(0,T;L^{\infty}(\Omega))$.

Let {$\langle\cdot,\cdot\rangle$} denote the duality pairing between $H_0^1(\Omega)$ and {its dual space} $H^{-1}(\Omega)$. 
{Let the norm $\|\cdot\|_{H^{-1}(\Omega)}$ be defined by} 
\begin{equation}
\|w\|_{H^{-1}(\Omega)}\coloneqq \sup_{v\in H_0^1(\Omega)\setminus\{0\}}\frac{\langle w,v\rangle}{\|\nabla v\|_{\Omega}} \quad \forall w \in H^{-1}(\Omega).
\end{equation}
Let ${X}\coloneqq L^2(0,T;H_0^1(\Omega))$ 
and {$Y\coloneqq L^2(0,T;H^1_0(\Omega))\cap H^1(0,T;H^{-1}(\Omega))$, with respective norms 
\begin{equation}\label{eq:continuous_norms}
\begin{aligned}
\norm{v}_{X}^2\coloneqq \int_{0}^{T}{\norm{\nabla v}_{\Omega}^2}\mathrm{d}t, &&&
\norm{w}_{Y}^2\coloneqq \int_0^T \norm{\partial_t w}_{H^{-1}(\Omega)}^2 + \norm{\nabla w}_{\Omega}^2 \mathrm{d}t,
\end{aligned}
\end{equation}
for all $v \in X$ and all $w\in Y$. Let $Y_{0}\subset Y$ denote the subspace of functions that vanish at $t=0$.} 

Let the source term ${G}\in L^2(0,T;H^{-1}(\Omega))$ and the initial distribution $m_0\in \LOm$. We will say that ${G}$ is nonnegative in the sense of distributions if $\int_0^T\langle {G},v\rangle_{}\mathrm{d}t\geq 0$ whenever $v\in L^2(0,T,H_0^1(\Omega))$ is such that $v\geq 0$ a.e.\ in $\QT$.

Next, let the coupling term $ {F}: L^2(0,T;L^2(\Omega))\to L^2(0,T;H^{-1}(\Omega))$ be a continuous operator.
 Moreover, we assume that there exists a constant $C_F>0$ such that  
\begin{equation}
		\|F[v]\|_{L^2(0,T;H^{-1}(\Omega))}\leq C_F\left(\|v\|_{Q_T}+1\right) \quad\forall v\in L^2(0,T;L^2(\Omega)).\label{F-boundedness}
\end{equation}
We say that ${F}$ is \emph{strictly monotone on $L^2(0,T;H_0^1(\Omega))$} if
$$\int_{0}^T\langle {F}[w]-{F}[v], w-v\rangle\mathrm{d}t\leq 0\quad \Longrightarrow \quad w=v\quad \text{in }L^2(0,T;H_0^1(\Omega))$$ whenever $w,v\in  L^2(0,T;H_0^1(\Omega))$.
{Note that although $F$ is defined on $L^2(0,T;\LOm)$, strict monotonicity will only be used later on the smaller space $L^2(0,T;H^1_0(\Omega))$}.
Finally, we suppose that the terminal cost operator ${S}:\LOm\to \LOm$ is continuous and that there exists a constant $C_S\geq 0$ such that
\begin{equation}\label{S2}
		\|S[v]\|_{\Omega}\leq C_S(\|v\|_{\Omega}+1)  \quad \forall v\in \LOm.
\end{equation}
We say that ${S}$ is \emph{monotone on $\LOm$} if 
$$\left({S}[w]-{S}[v],w-v\right)_{\Omega}\geq 0\quad \forall w,v\in \LOm.$$

\begin{eg}
	The above hypotheses allow for a broad class of coupling operators $F$. This class includes for example local operators $F:L^2(0,T;L^2(\Omega))\to L^2(Q_T)$ of the form $F[z](t,x)\coloneqq f(z(t,x))$, for a.e.\ $(t,x)\in Q_T$, where $z\in L^2(0,T;L^2(\Omega))$ is given and the function $f:\mathbb{R}\to\mathbb{R}$ is strictly monotone and Lipschitz continuous. This class also includes nonlocal smoothing operators of the form $F\coloneqq (\partial_t-\nu\Delta)^{-1}: L^2(0,T;L^2(\Omega))\to Y$ where, for each $z\in L^2(0,T;L^2(\Omega))$, $F[z]$  is defined as the unique function in $Y$ satisfying $\int_0^T\langle\partial_t F[z],\phi\rangle +\nu(\nabla F[z],\nabla \phi)_{\Omega}\mathrm{d}t = \int_0^T(z,\phi)_{\Omega}\mathrm{d}t$ for all $\phi\in X$.
	Moreover, this class admits examples of differential-type operators, e.g.\ operators $F:L^2(0,T;L^2(\Omega))\to L^2(0,T;H^{-1}(\Omega))$ which take the form $ \langle F[z](t),\phi \rangle\coloneqq -(z\bm{v}\cdot \nabla \phi)_{\Omega}$ for $z\in L^2(0,T;L^2(\Omega))$, $\phi\in H_0^1(\Omega)$ and for a.e.\ $t\in (0,T)$, where $\bm{v}\in  C^{1}(\overline{\Omega};\mathbb{R}^d)$ is some vector field that satisfies $\text{div}(\bm{v})>0$ in $\Omega$. In this case, for all $z_1,z_2\in X$, we have $\int_0^T\langle F[z_1](t) - F[z_2](t),z_1-z_2 \rangle\mathrm{d}t= \frac{1}{2}\int_0^T(\text{div}(\bm{v}),(z_1-z_2)^2)_{\Omega}\mathrm{d}t$ so $F$ is strictly monotone on $L^2(0,T;H^1_0(\Omega))$.
	 
\end{eg}

\subsection{Subdifferential of the Hamiltonian}

Recall that the Hamiltonian $H$ is defined in~\eqref{H-time-homogeneous}.
We assume that $\mathcal{A}$ is a compact metric space, and that ${b}:[0,T]\times\overline{\Omega}\times \mathcal{A}\to\R^\dim$ and ${f}:[0,T]\times\overline{\Omega}\times\mathcal{A}\to\mathbb{R}$ are uniformly continuous functions.
It then follows that the Hamiltonian $H$ given by \eqref{H-time-homogeneous} is Lipschitz continuous with respect to its third argument, i.e.\
\begin{equation}\label{eq:Lipschitz_H}
\abs{H(t,x,p_1)-H(t,x,p_2)}\leq L_H \abs{p_1-p_2} \quad \forall p_1,\,p_2\in \R^\dim,\; \forall (t,x)\in (0,T)\times\Omega,
\end{equation}
where $L_H \coloneqq \|b\|_{C([0,T]\times\overline{\Omega}\times\mathcal{A};\R^\dim)}$.

Let {the set-valued map $ \partial_p {H}\colon Q_T\times\R^\dim\rightrightarrows\R^\dim$ denote the pointwise Moreau-Rockafellar partial subdifferential of $H$} with respect to {$p\in \R^\dim$}, which is defined by 
\begin{equation}\label{subdifferential-space-time}
	\partial_p {H}(t,x,p)\coloneqq\left\{z\in\R^\dim:{H}(t,x,q)\geq {H}(t,x,p)+z\cdot(q-p)\quad\forall q\in\R^\dim\right\}.
\end{equation}
Note that $\partial_p {H}(t,x,p)$ is nonempty for all $x\in \Omega$ and $p\in\R^\dim$ because $H$ is real-valued and convex in $p$ for each fixed $(t,x)\in Q_T$.

Let $w\in L^1(0,T;W^{1,1}(\Omega))$ be given.
Following \cite{osborne2022analysis,osborne2024erratum}, we give notation for a collection of measurable selections of $\partial_pH(\cdot,\cdot,\nabla w)$. We say that a vector field $\tilde{b}\in L^{\infty}(Q_T;\R^\dim)$ is a \emph{measurable selection of $\partial_p{H}(\cdot,\cdot,\nabla w)$} if $\tilde{b}(t,x)\in \partial_p {H}(t,x,\nabla {w}(t,x))$ for a.e.\ $(t,x)\in Q_T$. 
\begin{definition}\label{DpH-space-time}
	Let ${H}$ be the function given by \eqref{H-time-homogeneous}. We define the set-valued map ${\Dp}{H}\colon L^1(0,T;W^{1,1}(\Omega))\rightrightarrows L^{\infty}(Q_T;\R^\dim)$ via
	$${\Dp}{H}[v]\coloneqq \left\{\tilde{b}\in L^{\infty}(Q_T;\R^\dim):\tilde{b}(t,x)\in  \partial_p {H}(t,x,\nabla v(t,x)) \text{ \emph{for a.e.\ }}(t,x)\in Q_T\right\}.$$
\end{definition}

We begin by showing that ${\Dp}H$ {has nonempty images for every argument in $L^1(0,T;W^{1,1}(\Omega))$}.
Define the set-valued map $\Lambda\colon Q_T\times\R^\dim\rightrightarrows \mathcal{A}$ via
\begin{equation}\label{eq:Lamda_set}
\Lambda(t,x,p)\coloneqq\text{argmax}_{\alpha\in\mathcal{A}}\{b(t,x,\alpha)\cdot p-f(t,x,\alpha)\}\quad\forall (t,x,p)\in Q_T\times\R^\dim.
\end{equation}
Observe that $\Lambda(t,x,p)$ is nonempty for all $(t,x,p)\in Q_T\times\R^\dim$ since $\mathcal{A}$ is compact and the functions $b$ and $f$ are uniformly continuous on $[0,T]\times\overline{\Omega}\times \mathcal{A}$. For a given $v\in L^1(0,T;W^{1,1}(\Omega))$, let $\Lambda[v]$ denote the set of all Lebesgue measurable functions $\alpha^*:Q_T\to\mathcal{A}$ that satisfy $\alpha^*(t,x)\in\Lambda(t,x,\nabla {v}(t,x))$ for a.e.\ $(t,x)\in Q_T$. We will refer to each element of $\Lambda[v]$ as \emph{a measurable selection of }$\Lambda(\cdot,\cdot,\nabla {v}(\cdot,\cdot))$. 
It can be shown that $\Lambda[v]$ is nonempty for each $v\in L^1(0,T;W^{1,1}(\Omega))$, see e.g.\ \cite[Theorem~10]{smears2014discontinuous}, where the proof of the existence of measurable selections ultimately rests upon the Kuratowski and Ryll--Nardzewski Selection Theorem~\cite{kuratowski1965general}.  

\begin{lemma}\label{Prop1-time}
{For every $v\in L^1(0,T;W^{1,1}(\Omega))$, the set~$\DpH[v]$ is a nonempty subset of $L^\infty(Q_T;\R^\dim)$ and}

	\begin{equation}\label{Hsubdiff-bound-time}
		\sup_{\tilde{b}\in {\Dp}{H}[v]}\|\tilde{b}\|_{L^{\infty}(Q_T;\R^\dim)}\leq {L_H}.
	\end{equation}
\end{lemma}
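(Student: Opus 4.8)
The plan is to prove the two assertions separately: the nonemptiness of $\DpH[v]$ by explicitly constructing an element from a measurable selection of the argmax map $\Lambda$, and the uniform bound \eqref{Hsubdiff-bound-time} as a pointwise consequence of the Lipschitz continuity \eqref{eq:Lipschitz_H} of $H$ in its third argument.

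For nonemptiness, I would start from the fact, recalled just before the statement, that $\Lambda[v]$ is nonempty, so that I may fix a measurable selection $\alpha^*\in\Lambda[v]$ and set $\tilde b(t,x)\coloneqq b(t,x,\alpha^*(t,x))$. Writing $p\coloneqq\nabla v(t,x)$, the key algebraic step is that $\alpha^*(t,x)$ attains the supremum in \eqref{H-time-homogeneous} defining $H(t,x,p)$, so that for every $q\in\R^\dim$,
$$H(t,x,q)\geq b(t,x,\alpha^*(t,x))\cdot q - f(t,x,\alpha^*(t,x)) = H(t,x,p) + \tilde b(t,x)\cdot(q-p),$$
which is precisely the subgradient inequality in \eqref{subdifferential-space-time}; hence $\tilde b(t,x)\in\partial_p H(t,x,\nabla v(t,x))$ for a.e.\ $(t,x)$. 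Measurability of $\tilde b$ follows because $(t,x)\mapsto(t,x,\alpha^*(t,x))$ is measurable and $b$ is continuous on $[0,T]\times\overline{\Omega}\times\mathcal{A}$, while the pointwise bound $\abs{\tilde b(t,x)}\leq \|b\|_{C([0,T]\times\overline{\Omega}\times\mathcal{A};\R^\dim)}=L_H$ shows $\tilde b\in L^\infty(\QT;\R^\dim)$. Thus $\tilde b\in\DpH[v]$ and the set is nonempty.

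For the uniform bound I would invoke the standard fact that every subgradient of an $L_H$-Lipschitz convex function has Euclidean norm at most $L_H$. Concretely, fix $(t,x)$ and let $z\in\partial_p H(t,x,\nabla v(t,x))$ with $z\neq 0$; testing \eqref{subdifferential-space-time} with $q=\nabla v(t,x)+z/\abs{z}$ gives $H(t,x,q)\geq H(t,x,\nabla v(t,x))+\abs{z}$, whereas \eqref{eq:Lipschitz_H} gives $H(t,x,q)-H(t,x,\nabla v(t,x))\leq L_H\,\abs{q-\nabla v(t,x)}=L_H$, so that $\abs{z}\leq L_H$. Applying this to an arbitrary $\tilde b\in\DpH[v]$ at a.e.\ $(t,x)$ and taking the essential supremum over $\QT$ yields $\|\tilde b\|_{L^\infty(\QT;\R^\dim)}\leq L_H$; taking the supremum over $\tilde b\in\DpH[v]$ then gives \eqref{Hsubdiff-bound-time}.

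The genuinely hard part is essentially imported rather than proved here: it is the existence of a measurable selection $\alpha^*\in\Lambda[v]$, which rests on the Kuratowski--Ryll-Nardzewski selection theorem as cited. Granting that, the only point requiring a little care is the measurability of the composition $(t,x)\mapsto b(t,x,\alpha^*(t,x))$, and everything else reduces to the convexity and the Lipschitz structure of $H$ and is routine.
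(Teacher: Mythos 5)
Your proposal is correct and follows essentially the same route as the paper: both construct an element of $\DpH[v]$ from a measurable selection $\alpha^*\in\Lambda[v]$ via $\tilde b = b(\cdot,\cdot,\alpha^*)$, and both derive \eqref{Hsubdiff-bound-time} from the Lipschitz bound \eqref{eq:Lipschitz_H} on $H$. You merely spell out two steps the paper leaves implicit (the verification of the subgradient inequality from the argmax property, and the elementary argument that subgradients of an $L_H$-Lipschitz convex function lie in the ball of radius $L_H$), which is fine.
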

\begin{proof}
	Let $v\in L^1(0,T;W^{1,1}(\Omega))$ be given. 
{As explained above, the set $\Lambda[v]$ is nonempty}, so after choosing some $\alpha^* \in \Lambda[v]$, we deduce from~\eqref{eq:Lamda_set} that
 \begin{equation}
 H(t,x,\nabla {v}(t,x))=b(t,x,\alpha^*(t,x))\cdot\nabla {v}(t,x)-f(t,x,\alpha^*(t,x)),
 \end{equation}
 for a.e.\ $(t,x)\in Q_T$.
Let $b^*\colon(t,x)\mapsto b(t,x,\alpha^* (t,x))$ for a.e.\ $(t,x)\in \QT$. It then follows that $ b^*(t,x)$ is Lebesgue measurable and $b^*(t,x)\in \partial_p H(t,x,\nabla {v}(t,x))$ for a.e.\ $(t,x)\in Q_T$.
Furthermore, we have that $b^* \in L^\infty(Q_T;\R^\dim)$ since $\abs{b^*(t,x)}\leq \|b\|_{C(\overline{Q_T}\times\mathcal{A};\R^\dim)}{=L_H}$ for a.e.\ $(t,x)\in \QT$.
Hence, $b^*\in {\Dp}H[v]$, thus establishing the nonemptiness of ${\Dp}H[v]$ for each $v\in L^1(0,T;W^{1,1}(\Omega))$.
To obtain the uniform bound~\eqref{Hsubdiff-bound-time}, {note that the Lipschitz continuity of $H$, c.f.\ \eqref{eq:Lipschitz_H}, readily implies that, for all $(t,x,p)\in (0,T)\times\Omega\times \R^\dim$, the subdifferential $\partial_p H(t,x,p)$ is contained in a closed ball in $\R^\dim$ centred at the origin and of radius $L_H$. This implies that  any $\tilde{b} \in \DpH[v]$ satisfies $\abs{\tilde{b}(t,x)}\leq L_H$ for a.e.\ $(t,x)\in \QT$. }
\end{proof}

The second preliminary result describes a closure property of the set-valued map ${\Dp}{H}$ that will be key to the foregoing analysis. 
{Recall that the space $X=L^2(0,T;H^1_0(\Omega))$ was defined above.}
\begin{lemma}\label{closure}
{Let $\{v_k\}_{k\in\N}$ be a sequence in $X$ that converges strongly to a limit $v \in X$. %=\lim_{k\tends\infty}v_k$.
If a sequence $\{\tilde{b}_k\}_{k\in\N}$ satisfies $\tilde{b}_k\in \DpH[v_k]$ for all $k\in \N$, and if $\{\tilde{b}_k\}_{k\in\N}$ converges weakly to some $\tilde{b}$ in $L^{2}(\QT;\R^\dim)$ as $k\tends \infty$, then $\tilde{b}\in \DpH[v]$.} 

\end{lemma}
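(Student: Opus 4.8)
The plan is to characterize membership in $\DpH[v]$ through the defining subdifferential inequality \eqref{subdifferential-space-time}, tested first against \emph{constant} vectors $q\in\R^\dim$ and integrated over an arbitrary measurable subset of $\QT$, and then to pass to the limit in $k$ by exploiting the interplay between the strong convergence of $v_k$ and the weak convergence of $\tilde b_k$. First I would note that strong convergence of $\{v_k\}$ in $X$ means precisely that $\nabla v_k\to\nabla v$ strongly in $L^2(\QT;\R^\dim)$. Since $\tilde b_k\in\DpH[v_k]$, for each fixed $q\in\R^\dim$ the definition \eqref{subdifferential-space-time} gives the pointwise inequality
\[
H(t,x,q)\geq H(t,x,\nabla v_k(t,x))+\tilde b_k(t,x)\cdot\bigl(q-\nabla v_k(t,x)\bigr)\quad\text{for a.e.\ }(t,x)\in\QT,
\]
which, integrated over any measurable $E\subset\QT$, yields an inequality between integrals that is stable under weak limits.

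Second, I would pass to the limit $k\to\infty$ in each of the three resulting terms. The term $\int_E H(t,x,q)\,\dd x\,\dd t$ is independent of $k$. For $\int_E H(t,x,\nabla v_k)\,\dd x\,\dd t$, the Lipschitz bound \eqref{eq:Lipschitz_H} gives $\abs{H(\cdot,\cdot,\nabla v_k)-H(\cdot,\cdot,\nabla v)}\leq L_H\abs{\nabla v_k-\nabla v}$, so strong $L^2$-convergence of the gradients forces convergence to $\int_E H(t,x,\nabla v)\,\dd x\,\dd t$. The term $\int_E\tilde b_k\cdot q\,\dd x\,\dd t$ converges by weak convergence of $\tilde b_k$ tested against $q\,\mathbf 1_E\in L^2$. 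The crucial term is the product $\int_E\tilde b_k\cdot\nabla v_k\,\dd x\,\dd t$: splitting it as $\int_E\tilde b_k\cdot(\nabla v_k-\nabla v)\,\dd x\,\dd t+\int_E(\tilde b_k-\tilde b)\cdot\nabla v\,\dd x\,\dd t+\int_E\tilde b\cdot\nabla v\,\dd x\,\dd t$, the first integral vanishes because $\{\tilde b_k\}$ is bounded in $L^2$ (indeed in $L^\infty$ by $L_H$, via Lemma~\ref{Prop1-time}) while $\nabla v_k\to\nabla v$ strongly, and the second vanishes by weak convergence tested against $\nabla v\,\mathbf 1_E\in L^2$. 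Hence the product converges to $\int_E\tilde b\cdot\nabla v\,\dd x\,\dd t$, giving in the limit
\[
\int_E H(t,x,q)\,\dd x\,\dd t\geq \int_E H(t,x,\nabla v)+\tilde b\cdot(q-\nabla v)\,\dd x\,\dd t.
\]

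Third, since $E\subset\QT$ is arbitrary, the standard argument (taking $E$ to be the set on which the integrand is negative) upgrades this to the pointwise inequality $H(t,x,q)\geq H(t,x,\nabla v)+\tilde b\cdot(q-\nabla v)$ for a.e.\ $(t,x)$, for each fixed $q$. Running this over a countable dense set $\{q_j\}\subset\R^\dim$ and discarding a single null set, the inequality then holds for all $q_j$ simultaneously; by the Lipschitz continuity of $p\mapsto H(t,x,p)$ and the continuity of the affine right-hand side in $q$, it extends to every $q\in\R^\dim$. This is exactly $\tilde b(t,x)\in\partial_pH(t,x,\nabla v(t,x))$ for a.e.\ $(t,x)$, while $\tilde b\in L^\infty(\QT;\R^\dim)$ with $\abs{\tilde b}\leq L_H$ follows from Lemma~\ref{Prop1-time}, so $\tilde b\in\DpH[v]$. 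I expect the main obstacle to be the cross term $\int_E\tilde b_k\cdot\nabla v_k\,\dd x\,\dd t$, whose convergence is precisely where the strong (rather than merely weak) convergence of $v_k$ in $X$ is indispensable; weak convergence of both factors would not close the argument.
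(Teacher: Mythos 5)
Your argument is correct and complete: the weak--strong limit passage in the integrated subdifferential inequality, followed by localization over arbitrary measurable sets and a countable dense family of test vectors $q$, is exactly the mechanism behind this closure property, and the cross term $\int_E \tilde b_k\cdot\nabla v_k\,\dd x\,\dd t$ is indeed the only delicate point, handled correctly by your splitting. The paper itself omits the proof and defers to \cite[Lemma 3]{osborne2022analysis}, whose argument is essentially the one you give, so your proposal simply supplies the details (including the observation that the pointwise inclusion $\tilde b(t,x)\in\partial_pH(t,x,\nabla v(t,x))$ automatically yields $\abs{\tilde b}\leq L_H$ a.e., hence $\tilde b\in L^\infty(\QT;\R^\dim)$).
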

This is proved similarly to the approach taken in \cite[Lemma 3]{osborne2022analysis}.

%%%%%%%%%%%%%%%%%%%%%%%%%%%%%%%%%%%%%%%%%%%%%%%%%%%%%%%%%%%%%%%%%%%%%%%%%%%%%%%%%%%%%%%%
%SECTION CONTINUOUS PROBLEM AND MAIN RESULTS
%%%%%%%%%%%%%%%%%%%%%%%%%%%%%%%%%%%%%%%%%%%%%%%%%%%%%%%%%%%%%%%%%%%%%%%%%%%%%%%%%%%%%%%%

\section{Continuous Problem}\label{sec3}
We now give the definition of the weak solution of the MFG PDI problem.
\begin{definition}[Weak Solution]\label{weakdef-space-time}
	We say that a pair $(u,m)\in X\times Y$ is a weak solution of~\eqref{mfg-classical} if $m(0)=m_0$ in $\LOm$
	and there exists {a} $\tilde{b}_*\in {\Dp}H[u]$ such that, for all $(\psi,\phi)\in Y_{0}\times X$,
	\begin{subequations}\label{eq:weakform-space-time}
		\begin{gather}
			\begin{split} 
				\int_0^T\left\langle\partial_t\psi,u\right\rangle +\nu(\nabla u,\nabla  \psi)_{\Omega}+(H[\nabla u],\psi)_{\Omega} \mathrm{d}t
				=&\int_0^T\langle {F}[m],\psi\rangle\mathrm{d}t
				\\
				&+(S[m(T)],\psi(T))_{\Omega},
			\end{split}\label{weakform1-space-time}
			\\ 
			\int_0^T\left\langle\partial_tm,\phi\right\rangle +\nu(\nabla m,\nabla \phi)_{\Omega}+(m\tilde{b}_*,\nabla \phi)_{\Omega} \mathrm{d}t=\int_0^T\langle {G}(t),\phi\rangle_{}\mathrm{d}t.\label{weakform2-space-time} 
		\end{gather} 
	\end{subequations}
\end{definition}

\begin{remark}
In order to see how~Definition~\ref{weakdef-space-time} leads to a PDI, define the set
\begin{equation}
\mathrm{div}\left( m \mathcal{D}_pH[u] \right)\coloneqq \left\{ g\in L^2(0,T;H^{-1}(\Omega));\; \exists \tilde{b}\in \mathcal{D}_pH[u] \text{ s.t. } \; g =  \mathrm{div}(\tilde{b} m) \right\},
\end{equation}
where the equality above is understood in the sense of distributions. Then equation~\eqref{weakform2-space-time} can be written as $\p_t m -\nu \Delta m -G \in \mathrm{div}\left( m \mathcal{D}_pH[u] \right)$ in the sense of distributions in $L^2(0,T;H^{-1}(\Omega))$.
We stress that Definition~\ref{weakdef-space-time} does not require uniqueness of the term $\tilde{b}_*$ appearing in~\eqref{eq:weakform-space-time}, although in some special cases its uniqueness can be shown, see~\cite{osborne2022analysis,osborne2024erratum}.
It is furthermore known from examples in~\cite{osborne2022analysis,osborne2024erratum} that the regularity of the solution can be quite limited even in the interior of the domain, in particular for the steady-state problem, there are examples where $m \in H^{3/2-\epsilon}$ for arbitrarily small $\epsilon>0$ but $m\notin H^{3/2}$. 
\end{remark}

\begin{remark}
	The choice of test and trial spaces here, together with the condition on the transport vector $\tilde{b}_*$, are sufficient to ensure that each term in~\eqref{eq:weakform-space-time} is well-defined and finite. It is also possible to write other formulations of the problem that are ultimately equivalent, such as casting the time-derivative onto $u$ in~\eqref{weakform1-space-time}, since if~$(u,m)\in X\times Y$ is a solution of \eqref{eq:weakform-space-time}, then $u$ admits a distributional time derivative $\p_t u \in L^2(0,T;H^{-1}(\Omega))$ so that $u\in Y$, and moreover that $u(T)=S[m(T)]$ a.e.\ in~$\Omega$.
\end{remark}

We now state the main result on the existence of a solution of~\eqref{eq:weakform-space-time}.
\begin{theorem}[Existence of Solutions]\label{existence-space-time}
	There exists at least one solution $(u,m)\in Y\times Y$ of~\eqref{eq:weakform-space-time} in the sense of Definition \ref{weakdef-space-time}. 
\end{theorem}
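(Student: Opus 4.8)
The plan is to recover a solution as a fixed point of the coupled HJB--KFP map, organising the argument around the subdifferential selection $\tilde b_*$ so that the relevant image sets are convex. Concretely, I would introduce two solution operators. First, for a given density (see below), solve the backward semilinear HJB equation \eqref{weakform1-space-time} for $u$: this is a monotone parabolic problem with the Lipschitz, convex lower-order term $H[\nabla u]$ and data $F[m]$, $S[m(T)]$, well-posed in $Y$ with $u(T)=S[m(T)]$. Second, for each drift $\tilde b$ in the ball $K\coloneqq\{\tilde b\in L^\infty(\QT;\R^\dim):\norm{\tilde b}_{L^\infty(\QT;\R^\dim)}\le L_H\}$, solve the linear forward KFP equation \eqref{weakform2-space-time} for $m$ with initial datum $m_0$; since $\tilde b$ enters only through the bounded transport term, this is a standard linear parabolic problem, uniquely solvable in $Y$. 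The key point is that the energy estimates for both operators use only $\norm{\tilde b}_{L^\infty}\le L_H$ (Lemma~\ref{Prop1-time}) together with the growth bounds \eqref{F-boundedness} and \eqref{S2}, so they yield a priori bounds in $Y$ for $u$ and $m$ that are uniform over $K$.

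Next I would set up the Kakutani--Fan--Glicksberg fixed-point theorem for the set-valued map $\Gamma\colon K\rightrightarrows K$ defined by $\Gamma(\tilde b)\coloneqq\DpH[u]$, where $m=m(\tilde b)$ and $u=u(m)$ are produced by the two operators above. The set $K$ is convex and weak-$*$ compact in $L^\infty(\QT;\R^\dim)$; the values $\Gamma(\tilde b)$ are nonempty and contained in $K$ by Lemma~\ref{Prop1-time} and \eqref{Hsubdiff-bound-time}, and they are convex because $\partial_p H(t,x,\cdot)$ is convex-valued, so that $\DpH[u]$ is a convex set of selections --- this is precisely why I fix the point on the selection rather than on $m$. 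It remains to check that $\Gamma$ has closed graph for the weak-$*$ topology on $K$: given $\tilde b_n\rightharpoonup^*\tilde b$ with $c_n\in\Gamma(\tilde b_n)$ and $c_n\rightharpoonup^* c$, the uniform $Y$-bounds and the Aubin--Lions lemma give $m_n\to m(\tilde b)$ strongly in $L^2(\QT)$; continuity of $F$ then yields $F[m_n]\to F[m]$, and a coercivity estimate for the HJB difference upgrades the value-function convergence to strong convergence $u_n\to u$ in $X$. The closure property Lemma~\ref{closure} then gives $c\in\DpH[u]=\Gamma(\tilde b)$, and a fixed point $\tilde b_*\in\Gamma(\tilde b_*)$ furnishes the triple $(u,m,\tilde b_*)$ solving \eqref{eq:weakform-space-time} in the sense of Definition~\ref{weakdef-space-time}.

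The main obstacle, which I would isolate as a separate lemma, is the convergence of the terminal coupling $S[m_n(T)]\to S[m(T)]$ needed for the strong $X$-convergence of $u_n$ in the closed-graph step. Since the trace map $Y\ni w\mapsto w(T)\in\LOm$ is linear and bounded, the $Y$-bounds give only weak convergence $m_n(T)\rightharpoonup m(T)$ in $\LOm$, whereas the mere continuity of $S$ requires strong convergence; compactness of $Y$ in $C([0,T];\LOm)$ is unavailable, so this cannot be had for free. I would instead upgrade to strong $L^2(\Omega)$-convergence of $m_n(T)$ through a duality argument: representing $\norm{m_n(T)-m(T)}_\Omega$ via the solution of an adjoint backward parabolic problem carrying the transport field $\tilde b_n$, and controlling it by time-weighted stability estimates in Bochner--Sobolev norms, so that the strong $L^2(\QT)$-convergence of $m_n-m$ and the uniform drift bound force the trace difference to zero. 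This is the analytic heart of the proof; once it is in place, passing to the limit in every term of \eqref{eq:weakform-space-time}, including the bilinear transport term $(m_n\tilde b_n,\nabla\phi)_\Omega$ (strong $L^2$ against weak-$*$ $L^\infty$), is routine, and the construction delivers $(u,m)\in Y\times Y$ as claimed.
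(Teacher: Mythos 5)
Your proposal is correct in outline, but it follows a genuinely different route from the one the paper actually takes. The paper proves Theorem~\ref{existence-space-time} \emph{constructively}: it applies Kakutani's fixed point theorem only at the discrete level (Section~\ref{sec6}, with the map $\mathcal{K}_k[\tilde{b}]=\DpH[\mathcal{U}_k[\mathcal{M}_k[\tilde{b}]]]$ on the same ball $\mathcal{B}$ of drifts you call $K$), where the closed-graph step is essentially free because $\Vkpm$ is finite-dimensional; it then derives uniform stability bounds in the $\norm{\cdot}_{\Vkpm}$ norms and passes to the limit $k\to\infty$ using the discrete compactness machinery of Section~\ref{sec7} and the duality argument of Lemma~\ref{lem:discrete_dual_convergence} and Theorem~\ref{main-thm-2} for the terminal trace. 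You instead run Kakutani directly at the continuous level, so all of the hard infinite-dimensional analysis (Aubin--Lions, the adjoint/weighted-norm argument for $m_n(T)\to m(T)$ in $\LOm$, the upgrade of $u_n\rightharpoonup u$ to strong $X$-convergence) is concentrated in the closed-graph verification rather than in a single limit passage. Both routes hinge on the same two structural ingredients --- fixing the point on the subdifferential selection so that the images $\DpH[u]$ are convex, and the duality argument to get strong $L^2(\Omega)$-convergence of the terminal traces, which you correctly isolate as the analytic heart --- and your identification of the trace issue exactly mirrors the paper's Theorem~\ref{main-thm-2}. What each approach buys: yours works on general Lipschitz domains and is independent of any discretization (the paper explicitly acknowledges this ``more direct analysis'' and defers it to the first author's thesis for brevity), whereas the paper's route delivers existence and the convergence of a computable scheme in one stroke, which is the point of the article. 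Two small points you should make explicit if you write this up: the weak-$*$ topology on $K$ is metrizable (separability of $L^1(\QT;\R^\dim)$), which justifies the sequential closed-graph argument; and the adjoint problem in your duality step carries the varying drifts $\tilde{b}_n$, so identifying the limit of the transport term there requires the same time-weighted strong convergence of the adjoint gradients away from $t=T$ that the paper establishes in \eqref{eq:discrete_dual_convergence_5}.
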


{In this paper, we give a constructive proof of Theorem~\ref{existence-space-time} in the case where the domain~$\Omega$ is a polytope by showing the convergence of finite element approximations of the problem. The proof of Theorem~\ref{existence-space-time} is therefore given later in Section~\ref{sec8:proofs_of_convergence}.
We can also prove the existence of a solution of~\eqref{eq:weakform-space-time} for more general domains by a more direct analysis, although we omit this second approach for the sake of brevity; see the PhD thesis~\cite{YohancePhD} of the first author for a complete analysis.

{Under standard monotonicity assumptions, we recover the uniqueness of the solution in the sense of Definition~\ref{weakdef-space-time}.}
{Crucially, by choosing the subdifferential set $\DpH$ as the natural generalization of the partial derivative of the Hamiltonian to the case of nondifferentiable Hamiltonians, we obtain a straightforward generalization of the uniqueness result of~\cite{lasry2007mean}, see also~\cite{osborne2022analysis,osborne2024erratum}.}

\begin{theorem}[Uniqueness of Solutions]\label{uniqueness-space-time} 
	If  the initial density $m_0\in \LOm$ is nonnegative a.e.\ in $\Omega$, the source term $G$ is nonnegative in the sense of distributions in $L^2(0,T;H^{-1}(\Omega))$, the coupling term $F$ is strictly monotone on $X$ and the terminal cost $S$ is monotone on $\LOm$, then there is at most one weak solution $(u,m)$ to \eqref{eq:weakform-space-time} in the sense of Definition \ref{weakdef-space-time}.
\end{theorem}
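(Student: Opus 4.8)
The plan is to adapt the Lasry--Lions uniqueness argument to the present subdifferential (PDI) setting; the new ingredient, replacing the usual identity involving $\nabla_p H\cdot(\nabla u_1-\nabla u_2)$, is a pair of pointwise convexity inequalities for the measurable selections $\tilde b_{*,i}$, used together with the nonnegativity of the densities. Suppose $(u_1,m_1)$ and $(u_2,m_2)$ are two weak solutions in the sense of Definition~\ref{weakdef-space-time}, with associated transport fields $\tilde b_{*,1}\in\DpH[u_1]$ and $\tilde b_{*,2}\in\DpH[u_2]$. Two preliminary facts are needed. First, by the second remark following Definition~\ref{weakdef-space-time}, each $u_i\in Y$, so in particular $\partial_t m_i,\partial_t u_i\in L^2(0,T;H^{-1}(\Omega))$ and every duality pairing below is well defined. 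Second, I would show that any weak solution satisfies $m_i\ge0$ a.e.\ in $\QT$: testing~\eqref{weakform2-space-time} with $-m_i^-\in X$, which lies in $Y_0$ because $m_0\ge0$ forces $m_i^-(0)=0$, and using $\tilde b_{*,i}\in L^\infty(\QT;\R^d)$ together with the nonnegativity of $G$ and a Gronwall estimate, yields $m_i^-=0$. This maximum-principle step is precisely where the hypotheses $m_0\ge0$ and $G\ge0$ are consumed, and it is indispensable for the convexity argument below.

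Next I would cross-test. Since $m_1(0)=m_2(0)=m_0$, the difference $m_1-m_2$ lies in $Y_0$ and is admissible in~\eqref{weakform1-space-time}; and $u_1-u_2\in X$ is admissible in~\eqref{weakform2-space-time}. Testing the difference of the two HJB equations with $\psi=m_1-m_2$ and the difference of the two KFP equations with $\phi=u_1-u_2$, I would subtract the two resulting identities. The contributions $\int_0^T\langle\partial_t(m_1-m_2),u_1-u_2\rangle\,\dd t$ and $\nu(\nabla(u_1-u_2),\nabla(m_1-m_2))_\Omega$ occur identically in both and cancel; crucially this requires no integration by parts in time, only that $\partial_t(m_1-m_2)\in L^2(0,T;H^{-1}(\Omega))$ pairs with $u_1-u_2\in X$. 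What survives is the identity $\mathcal L=\mathcal R$, where
\[
\mathcal L\coloneqq\int_0^T (H[\nabla u_1]-H[\nabla u_2],\,m_1-m_2)_{\Omega}-(m_1\tilde b_{*,1}-m_2\tilde b_{*,2},\,\nabla u_1-\nabla u_2)_{\Omega}\,\dd t,
\]
\[
\mathcal R\coloneqq\int_0^T\langle F[m_1]-F[m_2],\,m_1-m_2\rangle\,\dd t+(S[m_1(T)]-S[m_2(T)],\,m_1(T)-m_2(T))_{\Omega}.
\]

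The heart of the proof is to show $\mathcal L\le0$. Rearranging the integrand of $\mathcal L$, for a.e.\ $(t,x)\in\QT$ it equals
\[
m_1\big(H[\nabla u_1]-H[\nabla u_2]-\tilde b_{*,1}\cdot(\nabla u_1-\nabla u_2)\big)+m_2\big(H[\nabla u_2]-H[\nabla u_1]-\tilde b_{*,2}\cdot(\nabla u_2-\nabla u_1)\big).
\]
Applying the defining inequality~\eqref{subdifferential-space-time} of the subdifferential for $\tilde b_{*,i}(t,x)\in\partial_pH(t,x,\nabla u_i)$ at the comparison point $q=\nabla u_j$ with $j\ne i$ shows that each of the two bracketed factors is $\le0$; since $m_1,m_2\ge0$ by the first step, the whole integrand is $\le0$, hence $\mathcal L\le0$. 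Turning to $\mathcal R$, monotonicity of $S$ on $\LOm$ gives $(S[m_1(T)]-S[m_2(T)],m_1(T)-m_2(T))_\Omega\ge0$, so $\int_0^T\langle F[m_1]-F[m_2],m_1-m_2\rangle\,\dd t=\mathcal L-(S\text{-term})\le0$. As $m_1,m_2\in Y\subset X$, strict monotonicity of $F$ on $X$ then forces $m_1=m_2$ in $X$.

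It remains to deduce $u_1=u_2$. With $m_1=m_2$, the two HJB equations share the same right-hand side (same $F[m_i]$ and same terminal data $S[m_i(T)]$), so $w\coloneqq u_1-u_2\in Y$ solves the homogeneous problem with $w(T)=0$. Recasting the HJB with the time derivative on $u$ (as justified in the second remark) and testing with $\phi=w(t)\in H^1_0(\Omega)$ gives, for a.e.\ $t$, the energy identity $-\tfrac12\tfrac{\dd}{\dd t}\norm{w}_\Omega^2+\nu\norm{\nabla w}_\Omega^2=-(H[\nabla u_1]-H[\nabla u_2],w)_\Omega$. Bounding the right-hand side by $L_H\norm{\nabla w}_\Omega\norm{w}_\Omega$ via the Lipschitz property~\eqref{eq:Lipschitz_H} and Young's inequality, absorbing $\tfrac{\nu}{2}\norm{\nabla w}_\Omega^2$, and invoking $w(T)=0$ in a backward Gronwall argument yields $w\equiv0$, i.e.\ $u_1=u_2$. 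I expect the main obstacle to be the sign analysis of $\mathcal L$ together with its prerequisite, the nonnegativity of $m_1$ and $m_2$: this is exactly where the choice of $\DpH$ as the generalized derivative, the two-sided use of the subdifferential inequality, and the maximum principle for the KFP equation must all interlock, and it is the step with no counterpart in the differentiable theory beyond the classical Lasry--Lions computation.
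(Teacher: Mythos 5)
Your proof is correct and follows essentially the same route as the paper: nonnegativity of the densities, cross-testing the two equations with $\psi=m_1-m_2$ and $\phi=u_1-u_2$, the two-sided subdifferential inequality to show the surviving term is nonpositive, monotonicity of $F$ and $S$ to conclude $m_1=m_2$, and then uniqueness for the HJB equation to get $u_1=u_2$. The only difference is that you supply self-contained truncation/Gr\"onwall arguments for the two auxiliary steps (the weak maximum principle for $m_i$ and the uniqueness of the HJB solution) that the paper handles by citation.
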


\begin{proof}

	Suppose that there are two solutions $(u_i,m_i)$, $i\in\{1,2\}$, in the sense of Definition \ref{weakdef-space-time}. Then, we have, for each $i\in\{1,2\}$,
	\begin{equation}\label{mdiff}
		\begin{split} 
			\int_0^T\left\langle \partial_tm_i,\phi\right\rangle_{}+\nu(\nabla m_i,\nabla \phi)_{\Omega}+(m_i\tilde{b}_i,\nabla \phi)_{\Omega}\mathrm{d}t=\int_0^T\langle {G}(t),\phi\rangle_{}\mathrm{d}t,
		\end{split} 	
	\end{equation} 
	for all $\phi\in X$, for some $ {\tilde{b}_i}\in {\Dp}{H}[u_i]$, {and}
	\begin{equation}\label{udiff}
		\begin{split}
			\int_0^T\left\langle \partial_t\psi,u_i\right\rangle_{}+\nu(\nabla u_i,\nabla \psi)_{\Omega}+(H[\nabla u_i],\psi)_{\Omega}\mathrm{d}t=&\int_0^T\langle F[m_i],\psi\rangle\mathrm{d}t
			\\
			&+(S[m_i(T)],\psi(T))_{\Omega},
		\end{split}
	\end{equation}
	for all $\psi\in Y_{0}$, with $m_i({0})=m_0$ in $L^2(\Omega)$. {Since $m_0\geq 0$ a.e.\ in $\Omega$, since $G$ is nonnegative in the sense of distributions and since $m_i|_{\partial \Omega} =0$ for a.e.\ $t\in (0,T)$, the weak maximum principle (see, e.g.\ \cite{aronson1967local}) implies that} $m_i \geq 0$ a.e.\ {in $\QT$, for each} $i\in\{1,2\}$. After choosing test functions $\psi = m_1-m_2\in Y_0$ and $\phi = u_1-u_2$ in \eqref{udiff} and \eqref{mdiff}, respectively, and subtracting the two equations, we deduce that
	\begin{equation}\label{7'-time}
		\begin{split} 
			\int_0^{T}\int_{\Omega}m_1{\lambda_{12}}+m_2{\lambda_{21}}\mathrm{d}x\mathrm{d}{t}=& \int_0^{T}\left\langle {F}[m_1]-{F}[m_2], m_1-m_2\right\rangle\mathrm{d}t
			\\
			&+({S}[m_1(T)]-{S}[m_2(T)], m_1(T)-m_2(T))_{\Omega},
		\end{split} 
	\end{equation}
	where {$\lambda_{ij}\in L^2(0,T;\LOm)$, $i,j\in\{1,2\}$, is defined by
	$$\lambda_{ij}:=-H[\nabla {u_j}]+H[\nabla {u_i} ]+ \tilde{b}_i \cdot\nabla ({u_j}-{u_i}). $$
B}y definition of ${\Dp}{H}$ {in~Definition~\ref{DpH-space-time}}, it follows that ${\lambda_{ij}}\leq 0$ a.e.\ in $\QT$. 
{Using the nonnegativity of $m_i$, $i\in \{1,2\}$, we then} deduce from \eqref{7'-time} that 
	\begin{equation}
		\begin{split}
			 0\geq 
			 \int_0^{T}\langle {F}[m_1]-&{F}[m_2], m_1-m_2\rangle\mathrm{d}t
			\\
			&+({S}[m_1(T)]-{S}[m_2(T)], m_1(T)-m_2(T))_{\Omega}.
		\end{split}
	\end{equation}
{The strict monotonicity of $F$ and the monotonicity of $S$ then imply that $m_1=m_2$ in $\QT$.}
{Since $Y$ is continuously embedded in $C([0,T];L^2(\Omega))$, it follows that $m_1(T)=m_2(T)$ a.e.\ in $\Omega$.}
Consequently, {it follows that $u_1$ and $u_2$ solve~\eqref{weakform1-space-time} with common right-hand side and common boundary conditions,  and thus $u_1=u_2$ in $\QT$ by uniqueness of the solution of the corresponding HJB equation}.
\end{proof}

%----------------------------------------------------------

%%%%%%%%%%%%%%%%%%%%%%%%%%%%%%%%%%%%%%%%%%%%%%%%%%%%%%%%%%%%%%%%%%%%%%%%%%%%%%%%%%%%%%%%
%SECTION NUMERICAL SCHEME
%%%%%%%%%%%%%%%%%%%%%%%%%%%%%%%%%%%%%%%%%%%%%%%%%%%%%%%%%%%%%%%%%%%%%%%%%%%%%%%%%%%%%%%%

\section{Notation and setting for discretization}\label{sec4} 
In this section we introduce a monotone finite element scheme for approximating solutions to the weak formulation \eqref{eq:weakform-space-time}. In the sequel,  we shall further assume that $\Omega$ is a polyhedron, in addition to the earlier assumption that it is a bounded connected open set with Lipschitz boundary.
%\subsection{Notation}

\subsection{Meshes}
 
Let $\{\mathcal{T}_k\}_{k\in\mathbb{N}}$ be a {shape-regular} sequence of conforming simplicial meshes of the domain $\Omega$, c.f.\ \cite[p.~51]{Ciarlet1978}.
In addition we assume that the meshes $\{\mathcal{T}_k\}_{k\in\mathbb{N}}$ are nested, {i.e.\ every element in $\mathcal{T}_{k+1}$ is either in $\Tk$ or is a subdivision of an element in $\Tk$.}
 {For each $k\in\N$, define $h_{\Tk}\in L^\infty(\Omega)$ the mesh-size function by $h_{\Tk}|_K\coloneqq\diam K$ for each element $K\in\Tk$, where $\diam K$ denotes the diameter $K$.
The maximum element size in $\Tk$ is denoted by $h_k\coloneqq \norm{h_{\Tk}}_{L^\infty(\Omega)}$.}
We assume that $h_k\to 0$ as $k\to \infty$.
{Shape-regularity of $\{\Tk\}_{k\in\N}$} refers to the condition that there exists a real-number $\delta>1$, independent of $k\in\mathbb{N}$, such that $ h_{\Tk}|_K\leq \delta \rho_{K}$ for all $K\in\Tk$, for all $k\in\N$, where $\rho_K$ denotes the radius of the largest inscribed ball in the element $K$.

{\paragraph{Notation for inequalities}
For real numbers $a$, $b$, we write $a\lesssim b$ if $a\leq C b$ for some constant $C$ that depends only on the problem data $\Omega$, $T$, $\mathcal{A}$, $b$, $\nu$, $H$, $F$ and $S$, and on the shape-regularity constant $\delta$, but is otherwise independent of the mesh $\Tk$ and the mesh-size $h_k$. We write $a \eqsim b$ if $a\lesssim b$ and $b\lesssim a$.}

\paragraph{Sets of vertices and edges} {For each $k\in \N$, let $\calVk$ denote the set of all vertices of $\Tk$ and let $\calVki=\calVk\cap \Omega$ denote the set of interior vertices of $\Tk$.} 
Let $\{x_i\}_{i=1}^{\Card\calVk}$ be an enumeration of $\calVk$. Without loss of generality, we may choose the ordering such that $x_i\in \calVki$ if and only if $i\leq M_k\coloneqq \Card\calVki$.
Two distinct vertices in $\calVk$ are called neighbours if they belong to a common element of $\Tk$. For a vertex $x_i \in \calVk $, the set of neighbouring vertices of $x_i$ is denoted by $\calVkx$.

For each $k\in\mathbb{N}$, let $\calE_k$ denote the set of edges of the mesh $\mathcal{T}_k$, i.e.\ the set of all closed line segments formed by all pairs of neighbouring vertices.
Given an edge $E\in\mathcal{E}_k$, let $\Tke\coloneqq \{K\in\mathcal{T}_k:E\subset K\}$ denote the set of elements of $\Tk$ containing~$E$.
 We say that an edge $E\in\calE_k$ is an internal edge if $E$ contains at least one vertex in~$\calVki$.
The set of all internal edges is denoted by $\calEki$.
For each vertex $x_i {\in \calVk}$, let $\mathcal{E}_{k,i}\coloneqq\{E\in\mathcal{E}_k:x_i\in E\}$ denote the set of edges containing $x_i$.
For each $K\in\Tk$, let $\calE_K\coloneqq \{E\subset K: E\in\mathcal{E}_{k,\Omega}\}$ denote the set of edges of the simplex $K$ that are internal edges.

For each element $K\in\mathcal{T}_k$ and a given vertex $x_i\in K$, let $F_{K,i}$ denote the $(d-1)$-dimensional face of $K$ that is opposite $x_i$, i.e.\ $F_{K,i}$ is the convex hull of all vertices of $K$ except $x_i$.
Let $\theta_{ij}^K$ denote the dihedral angle between the faces $\Fki$ and $\Fkj$. 
We assume that the family of meshes $\{\mathcal{T}_k\}_{k\in\mathbb{N}}$ satisfies the following hypothesis of Xu and Zikatanov (cf.\ \cite{xu1999monotone}): for any $k\in\mathbb{N}$ and for any internal edge $E\in\mathcal{E}_k$ formed by neighbouring vertices $x_i$ and $x_j$, there holds
\begin{equation}\label{XZ-condition}
	\sum_{K\in \Tke} |\Fki\cap \Fkj|_{d-2}\cot(\theta_{ij}^K)\geq 0,
\end{equation}
where $|\cdot|_{d-2}$ denotes the $d-2$ dimensional Hausdorff measure (counting measure for $d=2$).
This condition ensures that the stiffness matrix for the Laplacian on $V_k$ is an $M$-matrix~\cite{xu1999monotone}.
In the case where $d=2$, this condition requires that the sum of the angles opposite to any edge should be less than or equal to $\pi$. {Furthermore, meshes satisfying the condition \eqref{XZ-condition} in the case $d=2$ are equivalently Delaunay triangulations \cite{xu1999monotone}. This includes, for instance, meshes with nonobtuse triangles and also some meshes with obtuse triangles. Note also that such meshes can be often be obtained by standard mesh refinement routines. For example, if the initial mesh $\mathcal{T}_1$ consists of right-angled triangles then the newest vertex bisection method creates a sequence of meshes of only right-angled triangles, or if the initial mesh is acute then red refinement yields a sequence of meshes of acute triangles \cite[p.\ 66]{Verfurth2013}.}  

\subsection{Spatial finite element spaces}
Given an element $K\subset\R^\dim$, we let $\mathcal{P}_1(K)$ denote the vector space of $d$-variate real-valued polynomials on $K$ of total degree at most one. 
For each $k\in\N$, let the spatial finite element space $V_k$ be defined by
\begin{equation}
V_k \coloneqq \{v \in H^1_0(\Omega),\; v|_K \in \mathcal{P}_1(K) \quad \forall K\in\mathcal{T}_k \}.
\end{equation}
For each $k\in\N$, let $\{ \xi_i \}_{i=1}^{\Card\calVk}$ denote the standard nodal Lagrange basis for the space of all continuous piecewise linear functions on $\overline{\Omega}$ with respect to $\mathcal{T}_k$, where $\xi_i(x_j)=\delta_{ij}$ for all $i,\,j \in \{1,\dots,\Card\calVk\}$ for the chosen enumeration $\{x_i\}_{i=1}^{\calVk}$ of $\calVk$.
As there is no risk of confusion, we omit the dependence of the nodal basis on the index $k$ of the mesh in the notation.
Note that $\{\xi_i\}_{i=1}^{\Card\calVk}$ form a partition of unity on $\Omega$.
Recalling that $x_i \in \calVki$ if and only if $i\leq M_k\coloneqq \Card\calVki$, we see that $\{\xi_i\}_{i=1}^{M_k}$ is the standard nodal basis of $V_k$.

Let $V_k^*$ denote the space of continuous linear functionals on $V_k$, where we let the duality pairing between $V_k$ and $V_k^*$ be denoted by $\langle\cdot,\cdot\rangle_{V_k^*\times V_k}$. 
We equip $V_k^*$ with the {standard dual norm $\norm{\cdot}_{V_k^*}$}.
For any operator $\mathcal{L}:V_k\to V_k^*$ we define the adjoint operator $\mathcal{L}^*:V_k\to V_k^*$ by $\langle \mathcal{L}^*w, v\rangle_{V_k^*\times V_k}\coloneqq\langle \mathcal{L} v, w\rangle_{V_k^*\times V_k}$ for all $w,v\in V_{k}$.

\subsection{Mass lumping}\label{stabilisation-temporal}
{In order to obtain a numerical scheme satisfying a discrete maximum principle, we will use mass-lumping of the $L^2$-inner products in the discrete setting. 
We use the standard mass-lumping technique of defining a discrete inner-product by a quadrature approximation of the integral appearing in the $L^2$-inner product.
This leads to a diagonal (lumped) mass matrix for the standard nodal basis of $V_k$.}
{In particular, for each $k\in\N$,} let the inner product $(\cdot,\cdot)_{\Omk}$ on $V_k$ {be defined} by 
\begin{equation}\label{mass-lump-formula}
	(w,v)_{\Omk}\coloneqq\int_{\Omega}I_k(wv)\mathrm{d}x \quad \forall w,v\in V_k,
\end{equation}
where $I_k\colon C(\overline{\Omega})\cap H^1_0(\Omega)\tends V_k$ denotes {the canonical Lagrange interpolation operator, i.e.\ $I_k v = \sum_{i=1}^{M_k}v(x_i)\xi_i $ for all $v\in C(\overline{\Omega})\cap H^1_0(\Omega)$, where $\calVki=\{x_i\}_{i=1}^{M_k}$.}
It is straightforward to check that the mass-lumped inner product satisfies 
\begin{equation}
(w,v)_{\Omk} = \sum_{i=1}^{M_k}(\xi_i,1)_{\Omega}w(x_i)v(x_i)\quad\forall w,v\in V_k.
\end{equation}
Moreover, $(\cdot,\cdot)_{\Omk}$ induces a norm on $V_k$ that is given by $\|w\|_{\Omk}\coloneqq \sqrt{(w,w)_{\Omk}}$ for all $w\in V_k$.
{It is straightforward to show that
\begin{equation}\label{eq:L^2-mass_lumped_bound}
\norm{v}_{\Omega}\leq \norm{v}_{\Omk} \lesssim \norm{v}_\Omega \quad\forall v \in V_k,
\end{equation}
where the hidden constant in the second inequality depends only on the dimension $d$ and the shape-regularity of $\{\Tk\}_{k\in\N}$. {The first inequality follows immediately from the fact that $v^2\leq I_k(v^2)$ in $\Omega$ for all $v\in V_k$. Indeed, since the basis functions $\{\xi_i:1\leq i\leq M_k\}$ are nonnegative everywhere and satisfy $\sum_{i=1}^{M_k}\xi_i|_{K}=1$ for each element $K\in\mathcal{T}_k$, we deduce from Jensen's inequality that 
$$v^2|_{K}=\left(\sum_{i=1}^{M_k}v(x_i)\xi_i|_{K}\right)^2\leq \sum_{i=1}^{M_k}v(x_i)^2\xi_i|_{K}=I_k(v^2)|_K\quad\forall K\in\mathcal{T}_k,$$ for each $v\in V_k$.} As such, $v^2\leq I_k(v^2)$ in $\Omega$ holds for all $v\in V_k$, thereby showing the first inequality in~\eqref{eq:L^2-mass_lumped_bound}. 
We stress that there is no generic constant in the first inequality in~\eqref{eq:L^2-mass_lumped_bound}, which will be important for the weak compactness arguments that will follow in later sections.
The second inequality in~\eqref{eq:L^2-mass_lumped_bound} is easily obtained by a standard scaling and equivalence-of-norms argument.
}
 
For each $k\in\mathbb{N}$, let $R_k:L^2(\Omega)\to V_k$ be the {linear operator} defined by 
\begin{equation}\label{quasi-interp-op}
	R_k w \coloneqq \sum_{i=1}^{M_k}\frac{(\xi_i,w)_{\Omega}}{(\xi_i,1)_{\Omega}}\xi_i\quad\forall w\in L^2(\Omega).
\end{equation}
It is clear that {$R_k$ is a Riesz-map between the $L^2$-inner product and $(\cdot,\cdot)_{\Omk}$,} with 
\begin{equation}\label{eq:Rk_riesz}
(v,R_kw)_{\Omk} = (v,w)_{\Omega} \quad \forall v \in V_k, \quad \forall w \in L^2(\Omega).
\end{equation}
Furthermore, $R_k$ is $L^2$-stable with the bound $\|R_kw\|_{\Omega} \lesssim \|w\|_{\Omega}$ for all $w\in L^2(\Omega)$.
{We also have the well-known bound (see e.g.~\cite[p.~108]{Verfurth2013})
\begin{equation}\label{quasi-interp-approx-property}
	\norm{ 	h_{\Tk}^{-1} (w - R_kw)}_{\Omega} + \norm{\nabla(w-R_k w)}_{\Omega}\lesssim \norm{\nabla w}_{\Omega}\quad\forall w\in H_0^1(\Omega).
\end{equation}
It follows that $R_k$ is also $H^1_0(\Omega)$-stable, i.e.\ $\norm{ \nabla R_k w}_{\Omega}\lesssim \norm{\nabla w}_\Omega$ for all $w \in H^1_0(\Omega)$. }

We now introduce the following discrete dual norm $\norm{\cdot}_{\dualk}\colon V_k\tends \R_{\geq 0}$ defined by
 \begin{equation}\label{eq:dualknorm}
	\|w\|_{\dualk}\coloneqq \sup_{v\in V_k\backslash \{0\}}\frac{(w,v)_{\Omk}}{\|\nabla v\|_{\Omega}} \quad \forall w\in V_k.
\end{equation}

{\begin{lemma}\label{mass-lump-H10-dual-bound}
We have $\|v\|_{H^{-1}(\Omega)}  \eqsim \|v\|_{\dualk}$ for all $v\in V_k$, where the hidden constants depend only on the dimension $d$ and the shape-regularity of $\{\Tk\}_{k\in\N}$.
\end{lemma}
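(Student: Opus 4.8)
The plan is to identify each $v\in V_k\subset\LOm\subset H^{-1}(\Omega)$ with the functional $\phi\mapsto(v,\phi)_\Omega$, so that $\norm{v}_{H^{-1}(\Omega)}=\sup_{\phi\in H^1_0(\Omega)\setminus\{0\}}(v,\phi)_\Omega/\norm{\nabla\phi}_\Omega$, and then to compare this with $\norm{v}_{\dualk}$ from \eqref{eq:dualknorm}. The two quantities differ only in the test space ($H^1_0(\Omega)$ versus $V_k$) and in the inner product appearing in the numerator (the $L^2$-inner product versus the mass-lumped inner product $(\cdot,\cdot)_{\Omk}$). The whole argument rests on the Riesz-map identity \eqref{eq:Rk_riesz} together with the stability and approximation properties of $R_k$, and I would prove the two inequalities separately.

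For the bound $\norm{v}_{H^{-1}(\Omega)}\lesssim\norm{v}_{\dualk}$ I would simply test with $R_k\phi$. Given $\phi\in H^1_0(\Omega)$, the function $R_k\phi$ lies in $V_k$ and is therefore admissible in the supremum defining $\norm{v}_{\dualk}$. Using \eqref{eq:Rk_riesz} we have $(v,\phi)_\Omega=(v,R_k\phi)_{\Omk}\leq\norm{v}_{\dualk}\norm{\nabla R_k\phi}_\Omega$, and the $H^1_0(\Omega)$-stability of $R_k$ (recorded after \eqref{quasi-interp-approx-property}) gives $\norm{\nabla R_k\phi}_\Omega\lesssim\norm{\nabla\phi}_\Omega$. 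Taking the supremum over $\phi$ yields this direction, which is straightforward.

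For the reverse bound $\norm{v}_{\dualk}\lesssim\norm{v}_{H^{-1}(\Omega)}$, which is the more delicate one, I would fix $w\in V_k$ and rewrite the mass-lumped pairing using \eqref{eq:Rk_riesz} in the form $(v,w)_\Omega=(v,R_kw)_{\Omk}$, giving the splitting $(v,w)_{\Omk}=(v,w)_\Omega+(v,w-R_kw)_{\Omk}$. The first term is bounded directly by $\norm{v}_{H^{-1}(\Omega)}\norm{\nabla w}_\Omega$ since $w\in V_k\subset H^1_0(\Omega)$. For the second term, which quantifies the effect of the mass-lumping, I would apply a mesh-weighted Cauchy--Schwarz inequality -- legitimate because $(\cdot,\cdot)_{\Omk}$ is elementwise equivalent to the $L^2$-inner product, c.f.\ \eqref{eq:L^2-mass_lumped_bound}, and neighbouring elements have comparable diameter by shape-regularity -- followed by the approximation bound \eqref{quasi-interp-approx-property}, to obtain $(v,w-R_kw)_{\Omk}\lesssim\norm{h_{\Tk}v}_\Omega\,\norm{h_{\Tk}^{-1}(w-R_kw)}_\Omega\lesssim\norm{h_{\Tk}v}_\Omega\,\norm{\nabla w}_\Omega$.

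It then remains to absorb the weighted term, i.e.\ to establish the local inverse estimate $\norm{h_{\Tk}v}_\Omega\lesssim\norm{v}_{H^{-1}(\Omega)}$ for all $v\in V_k$; this is the main obstacle and the only place where I expect to need an argument beyond the stated properties of $R_k$. I would prove it by a localized duality argument using interior element-bubble functions: setting $\phi\coloneqq\sum_{K\in\Tk}h_K^2\,v\,b_K$, where $b_K$ is the interior bubble supported on $K$ and $h_K\coloneqq\diam K$, one finds $(v,\phi)_\Omega\eqsim\norm{h_{\Tk}v}_\Omega^2$ (since $\int_K v^2 b_K\eqsim\norm{v}_K^2$ on the finite-dimensional space $\mathcal{P}_1(K)$ by scaling) and, using elementwise inverse inequalities, $\norm{\nabla\phi}_\Omega\lesssim\norm{h_{\Tk}v}_\Omega$, whence $\norm{h_{\Tk}v}_\Omega\lesssim(v,\phi)_\Omega/\norm{\nabla\phi}_\Omega\leq\norm{v}_{H^{-1}(\Omega)}$. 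Because this construction is purely elementwise, it uses only shape-regularity and, crucially, requires no quasi-uniformity, consistent with the paper's allowance of arbitrary mesh-grading. Combining the two directions gives $\norm{v}_{H^{-1}(\Omega)}\eqsim\norm{v}_{\dualk}$ with constants depending only on $d$ and the shape-regularity constant.
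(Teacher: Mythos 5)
Your proposal is correct and follows essentially the same route as the paper: the first direction via $(v,\phi)_\Omega=(v,R_k\phi)_{\Omk}$ and the $H^1_0$-stability of $R_k$, and the second via the splitting $(v,w)_{\Omk}=(v,w)_\Omega+(v,w-R_kw)_{\Omk}$, the weighted Cauchy--Schwarz bound with \eqref{quasi-interp-approx-property}, and the inverse estimate $\norm{h_{\Tk}v}_\Omega\lesssim\norm{v}_{H^{-1}(\Omega)}$. The only difference is that you prove that last inverse estimate directly with an element-bubble duality argument, whereas the paper cites the local bound $\norm{h_{\Tk}v}_K\lesssim\norm{v}_{H^{-1}(K)}$ from the literature; your bubble construction is exactly the standard proof of that cited fact and is valid under shape-regularity alone.
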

\begin{proof}
For any $v\in V_k$, $k\in\N$, and $w\in H^1_0(\Omega)$, we have $(v,w)_\Omega =(v,R_k w)_{\Omk}$ and thus the definition of $\norm{\cdot}_{\dualk}$ and the $H^1_0$-stability of $R_k$ imply that $(v,w)_\Omega \leq \norm{v}_{\dualk}\norm{\nabla R_k w}_\Omega \lesssim \norm{v}_{\dualk}\norm{\nabla w}_\Omega$. This shows that $\norm{v}_{H^{-1}(\Omega)}\lesssim \norm{v}_{\dualk}$. To show the converse bound, now consider $w\in V_k$, and write $(v,w)_{\Omk}=(v,w)_\Omega+(v,w - R_k w )_{\Omk}$.
The Cauchy--Schwarz inequality and standard inverse inequalities imply that $(v,w - R_k w )_{\Omk} = \int_\Omega I_k(  v(w-R_kw ) )\mathrm{d}x \lesssim \norm{h_{\Tk} v}_\Omega \norm{h_{\Tk}^{-1}(w-R_k w)}_\Omega $. Note that $\norm{h_{\Tk} v}_\Omega \lesssim \norm{v}_{H^{-1}(\Omega)}$, which is deduced from the local bound $\norm{h_{\Tk} v}_K \lesssim \norm{v}_{H^{-1}(K)}$ for all $K\in\Tk$, see e.g.~\cite[p.~113]{Verfurth2013}, and from $\sum_{K\in\Tk}\norm{v}_{H^{-1}(K)}^2\leq \norm{v}_{H^{-1}(\Omega)}^2$.
In conclusion, we then use~\eqref{quasi-interp-approx-property} to find that $(v,w - R_k w )_{\Omk} \leq  \norm{v}_{H^{-1}(\Omega)} \norm{\nabla w}_\Omega$ for all $w\in V_k$, which implies that $\norm{v}_{\dualk}\lesssim \norm{v}_{H^{-1}(\Omega)}$.
\end{proof}}

%%%%%%%%%%%%%%%%%%%%%%%%%%%%%%%%%%%%%%%%%%%%%%%%%%%%%%%
%%%%%%%%%%%%%%%%%%%%%%%%%%%%%%%%%%%%%%%%%%%%%%%%%%%%%%%
%%%%%%%%%%%%%%%%%%%%%%%%%%%%%%%%%%%%%%%%%%%%%%%%%%%%%%%
%%%%%%%%%%%%%%%%%%%%%%%%%%%%%%%%%%%%%%%%%%%%%%%%%%%%%%%
%%%%%%%%%%%%%%%%%%%%%%%%%%%%%%%%%%%%%%%%%%%%%%%%%%%%%%%
%%%%%%%%%%%%%%%%%%%%%%%%%%%%%%%%%%%%%%%%%%%%%%%%%%%%%%%
%%%%%%%%%%%%%%%%%%%%%%%%%%%%%%%%%%%%%%%%%%%%%%%%%%%%%%%
%%%%%%%%%%%%%%%%%%%%%%%%%%%%%%%%%%%%%%%%%%%%%%%%%%%%%%%
%%%%%%%%%%%%%%%%%%%%%%%%%%%%%%%%%%%%%%%%%%%%%%%%%%%%%%%
%%%%%%%%%%%%%%%%%%%%%%%%%%%%%%%%%%%%%%%%%%%%%%%%%%%%%%%
%%%%%%%%%%%%%%%%%%%%%%%%%%%%%%%%%%%%%%%%%%%%%%%%%%%%%%%
%%%%%%%%%%%%%%%%%%%%%%%%%%%%%%%%%%%%%%%%%%%%%%%%%%%%%%%
%%%%%%%%%%%%%%%%%%%%%%%%%%%%%%%%%%%%%%%%%%%%%%%%%%%%%%%
%%%%%%%%%%%%%%%%%%%%%%%%%%%%%%%%%%%%%%%%%%%%%%%%%%%%%%%

\subsection{Spatial stabilization}\label{stabilisation-spatial}
We say that a linear operator $L:V_k\to V_k^*$ {satisfies} the \emph{discrete maximum principle} (DMP) provided that the following condition holds: if $v\in V_{k}$ and $\langle Lv,\xi_i\rangle_{V_k^*\times V_k}\geq 0$ for all $ i\in\{1,\cdots,M_k\}$, then $v\geq 0$ in $\Omega$.
Recall that under the condition~\eqref{XZ-condition}, the stiffness matrix of the Laplace operator on $V_k$ is an $M$-matrix, which implies monotonicity and the discrete maximum principle for the Laplace operator on $V_k$.
In order to handle the advective terms of the problem, we construct in this section an original volume-based self-adjoint linear stabilization that maintains the adjoint relationship between the equations of the MFG system.
For each $k\in\N$ and each edge $E\in\calEki$, let $\wEk\geq 0$ be a nonnegative weight to be chosen below, and let $\bm{t}_E$ be a chosen unit tangent vector to the edge $E$. The choice of orientation of $\bm{t}_E$ does not have any effect on the following.
Then, we define the stabilization diffusion matrix $\Dk\in L^\infty(\Omega;\R^{d\times d})$ elementwise over $\Tk$ by
\begin{equation}\label{edge-tensor-formula}
	\Dk|_K\coloneqq \sum_{E\in\mathcal{E}_K}\wEk\bm{t}_E\otimes\bm{t}_E\quad \forall K\in\mathcal{T}_k,
\end{equation}
where $\otimes$ denotes the outer-product of vectors, i.e.\ $\bm{t}_E\otimes\bm{t}_E\coloneqq \bm{t}_E\bm{t}_E^T\in\mathbb{R}^{d\times d}$, and where we recall that $\calEK$ denotes the set of edges of $K$ that are internal. 
Note that $\Dk$ is independent of the choice of orientations of the tangent vectors $\bm{t}_E$.
Since the weights $\wEk$ are nonnegative, it follows that $\Dk|_K$ is a symmetric, positive semi-definite matrix in $\mathbb{R}^{d\times d}$ for each $K\in\mathcal{T}_k$.
Then, let $W(V_k,\Dk)$ denote the set of all linear operators $L:V_{k}\to V_{k}^*$ of the form 
\begin{equation}\label{L-operator}
	\langle L v, w\rangle_{V_k^*\times V_k}\coloneqq \int_{\Omega}A_k\nabla v\cdot\nabla w+\tilde{b}\cdot\nabla v w\,\mathrm{d}x\quad \forall w,v\in V_k,
\end{equation}
where $A_k\coloneqq \nu\mathbb{I}+\Dk$ and where $\tilde{b} \in L^\infty(\Omega;\R^d)$ is some vector field that satisfies $\|\tilde{b}\|_{L^{\infty}(\Omega;\R^\dim)}\leq L_H$. 
{Observe also that $A_k \geq \nu \mathbb{I}$ in the sense of semi-definite matrices since $\Dk$ is positive semi-definite.}
Recall that $\delta$ denotes the shape-regularity parameter of the meshes $\{\mathcal{T}_k\}_{k\in\N}$.
\begin{theorem}[DMP]\label{DMP-edge-stabilisation-result}
	Suppose that the weights in~\eqref{edge-tensor-formula} are chosen such that
	\begin{equation}\label{eq:weight_condition}
			\frac{\delta L_H\diam E}{2(d+1)} < \wEk \lesssim L_H \diam E \quad \forall E \in \mathcal{E}_{k,\Omega}.
	\end{equation}
	Then, the following properties hold:
	\begin{itemize}
		\item $\|\Dk|_K\|_{L^{\infty}(K;\mathbb{R}^{d\times d})}\lesssim h_K$ for all $K\in\mathcal{T}_k$,
 		\item if $L\in W(V_k,\Dk)$ then $L$ and its adjoint $L^*$ satisfy the discrete maximum principle.
	\end{itemize}
\end{theorem}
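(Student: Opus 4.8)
The plan is to reduce the discrete maximum principle to the monotonicity of the stiffness matrix of $L$ over the interior vertices and to verify the latter by showing that this matrix is a nonsingular M-matrix. Throughout, write $L_{ij}\coloneqq\langle L\xi_j,\xi_i\rangle_{V_k^*\times V_k}$ for the stiffness entries over all vertices $x_i,x_j\in\calVk$, and recall that for $v=\sum_j v(x_j)\xi_j\in V_k$ the property $v\ge 0$ in $\Omega$ is equivalent to nonnegativity of the nodal values, while $\langle Lv,\xi_i\rangle$ equals the $i$th entry of the interior matrix applied to the interior nodal vector. The first bullet is immediate: each rank-one matrix $\bm{t}_E\otimes\bm{t}_E$ has unit norm, so the triangle inequality and the upper bound in \eqref{eq:weight_condition} give $\norm{\Dk|_K}_{L^\infty(K;\R^{d\times d})}\le\sum_{E\in\calEK}\wEk\lesssim L_H\sum_{E\in\calEK}\diam E\lesssim h_K$, the number of edges of a simplex depending only on $d$.

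For the DMP, the main work is to prove that $L_{ij}<0$ for every pair of neighbouring vertices $x_i\ne x_j$ (with $L_{ij}=0$ for non-neighbours). I would split $L_{ij}$ into diffusion, stabilization and advection contributions according to $A_k=\nu\mathbb{I}+\Dk$. The Laplacian part $\nu\int_\Omega\nabla\xi_j\cdot\nabla\xi_i$ is nonpositive, which is exactly the content of the Xu--Zikatanov condition \eqref{XZ-condition} through the cotangent representation of the stiffness entries. The decisive observation is that the stabilization part is not only negative but explicitly computable: if $E$ denotes the unique edge joining $x_i$ and $x_j$, then for any other edge $E'$ of $K$ at least one of $\bm{t}_{E'}\cdot\nabla\xi_i$ or $\bm{t}_{E'}\cdot\nabla\xi_j$ vanishes on $K$ (since an affine nodal function vanishes identically along any edge not containing its vertex), so only $E'=E$ survives; using $\bm{t}_E\cdot\nabla\xi_i=-1/\diam E$ and $\bm{t}_E\cdot\nabla\xi_j=+1/\diam E$ on $K$, the stabilization contribution equals $-\sum_{K\in\Tke}\wEk\abs{K}/(\diam E)^2<0$.

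It then remains to show that the stabilization strictly dominates the advection contribution element by element, which is precisely where the weight condition is calibrated. On each $K\in\Tke$, using $\xi_i\ge 0$, $\int_K\xi_i=\abs{K}/(d+1)$, $\abs{\tilde{b}}\le L_H$ and $\abs{\nabla\xi_j}=1/h_{K,j}$ on $K$, where $h_{K,j}$ is the height of $K$ from $x_j$, the advection term is bounded by $L_H\abs{K}/((d+1)h_{K,j})$. The comparison reduces to the purely geometric estimate $h_{K,j}>2\rho_K$, valid in every dimension because each facet of a simplex has area strictly less than the sum of the remaining facet areas, so that $h_{K,j}/\rho_K=\abs{\p K}/\abs{\Fkj}>2$; combined with shape-regularity $\diam E\le h_K\le\delta\rho_K$ this yields $(\diam E)^2/h_{K,j}<\delta\,\diam E/2$. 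The strict lower bound $\wEk>\delta L_H\diam E/(2(d+1))$ in \eqref{eq:weight_condition} is exactly what forces $\wEk\abs{K}/(\diam E)^2>L_H\abs{K}/((d+1)h_{K,j})$ on each element, so that the stabilization outweighs the advection; adding the nonpositive Laplacian part gives $L_{ij}<0$. The same bound applies with the roles of $i$ and $j$ interchanged, so $L_{ji}<0$ as well.

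Finally, I would close the M-matrix argument. Since $\{\xi_j\}$ is a partition of unity, $\sum_j\nabla\xi_j\equiv 0$, so every full row sum vanishes: $\sum_j L_{ij}=0$. Restricting to interior indices and using nonpositivity of the off-diagonals, the boundary terms give $\sum_{j\le M_k}L_{ij}=-\sum_{j>M_k}L_{ij}\ge 0$, strictly so whenever $x_i$ is adjacent to $\partial\Omega$, and in particular $L_{ii}=-\sum_{j\ne i}L_{ij}>0$. Thus the interior matrix has positive diagonal, nonpositive off-diagonals, and is weakly diagonally dominant and weakly chained to the strictly dominant boundary-adjacent rows (each interior vertex connecting within the interior-vertex graph to such a row, by connectedness of $\Omega$); it is therefore a nonsingular M-matrix with entrywise nonnegative inverse, which is the monotonicity encoding the DMP for $L$. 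For the adjoint, the stiffness matrix of $L^*$ is the transpose, and since $A_k$ is symmetric the only asymmetry lies in the advection term, whose bound we have established in both orderings; equivalently, entrywise nonnegativity of the inverse is preserved under transposition, so $L^*$ satisfies the DMP as well. I expect the geometric comparison underlying the sharp constant $\delta/(2(d+1))$ to be the principal obstacle, together with the care needed to make the chained diagonal-dominance argument apply to the interior matrix.
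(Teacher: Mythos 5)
Your proposal is correct, and its quantitative core coincides with the paper's: the exact evaluation of the stabilization off-diagonal entries $\int_\Omega \Dk\nabla\xi_j\cdot\nabla\xi_i\,\mathrm{d}x=-\wEk(\diam E)^{-2}\sum_{K\in\Tke}|K|_d$ (the paper isolates this as a separate lemma), the bound $(\tilde b\cdot\nabla\xi_j,\xi_i)_K\leq L_H|K|_d/((d+1)h_{K,j})$, the use of the Xu--Zikatanov condition to dispose of the Laplacian part, and the calibration of the strict lower bound on $\wEk$ are all identical; your derivation of $\|\nabla\xi_j\|_{L^\infty(K)}\le\delta/(2\diam E)$ via $h_{K,j}/\rho_K=|\partial K|_{d-1}/|\Fkj|_{d-1}>2$ is a clean justification of a step the paper labels ``straightforward.'' Where you genuinely diverge is the concluding mechanism: the paper fixes $v$ with $\langle Lv,\xi_i\rangle\ge 0$, expands $\nabla v=\sum_{j}\nabla\xi_j(v(x_j)-v(x_i))$ at a global minimizing vertex, and propagates the minimum from neighbour to neighbour until it reaches $\partial\Omega$, whereas you assemble the interior stiffness matrix and invoke M-matrix theory (nonpositive off-diagonals, zero full row sums from the partition of unity, weakly chained diagonal dominance). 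Both routes are valid and rest on the same connectivity fact; the paper's propagation argument uses only connectedness of the full vertex-adjacency graph directly, while your weakly-chained argument additionally requires (as you note) that every interior vertex be joined \emph{within the interior-vertex subgraph} to a boundary-adjacent interior vertex --- this does follow by truncating a path in the full graph at its first boundary vertex, but you should state that reduction explicitly rather than attributing it only to connectedness of $\Omega$. Your treatment of the adjoint (transposition preserves monotonicity, and you verified $L_{ji}<0$ symmetrically) matches the paper's implicit use of the same equivalence. The one payoff of your route is that it yields nonsingularity and the entrywise nonnegativity of the inverse, slightly more than the DMP statement itself; the payoff of the paper's route is that it avoids any global matrix-theoretic machinery.
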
 
The proof of Theorem~\ref{DMP-edge-stabilisation-result} is postponed to Appendix~\ref{sec:app:dmp}.
We remark that $\Dk$ can be chosen independently of the diffusion parameter $\nu$ of the problem.
Furthermore, it is clear that a monotone discretization of the parabolic operators of the MFG system can be obtained by combining the spatial stabilization above with a mass-lumped implicit Euler discretization for the time derivative. Indeed, this is easily seen from the fact that if $L \in W(V_k,\Dk)$, then the operator $V_k\ni w\mapsto \langle L w,\cdot \rangle_{V_k^*\times V_k} + \frac{1}{\tau_k}(w,\cdot)_{\Omk}$ also satisfies the discrete maximum principle for any $\tau_k>0 $.
Finally, note that if $L\colon V_k\tends V_k^*$ satisfies the DMP, then $L$ is necessarily a bijection since $V_k$ is finite dimensional.

\subsection{Space-time finite element spaces}
Given $k\in\mathbb{N}$, let $\mathcal{J}_k\coloneqq \{I_n\}_{n=1}^{\Nk}$ denote the partition of $[0,T]$ where $I_n\coloneqq (t_{n-1},t_n)$ with $t_n\coloneqq n\tau_k$ for $n\in\{1,\cdots,\Nk\}$, $t_0\coloneqq 0$, and the time-step $\tau_k\coloneqq T/\Nk$, with $\Nk \tends \infty$ as $k\tends \infty$. 
To simplify the analysis, we assume that {the temporal meshes are} nested, and thus $\tau_{k+1}\leq \tau_k$ for all $k\in\N$, and we also assume that the initial time-step satisfies
\begin{equation}\label{time-step-size}
	\tau_1<\nu L_H^{-2}.
\end{equation} 
{The above assumption \eqref{time-step-size} on the initial time-step is not essential as we introduce it solely for the sake of brevity of the analysis. For treatment of the analysis in the case where this assumption is not satisfied see \cite[Ch.\ 6]{YohancePhD}.}

For each $k\in \N$, we define $\Vk$ the space of piecewise constant $V_k$-valued functions defined on $[0,T] $ by
\begin{equation}
\mathbb{V}_k\coloneqq \left\{v\in  X,\; v|_{I_n}\in V_k \text{ is constant } \forall n\in\{1,\cdots,\Nk\}\right\}.
\end{equation}
For a function $v\in\Vk$ and $n\in\{1,\cdots,\Nk-1\}$, we let $v(t_n^{-})$ and $v(t_n^{+})$  denote respectively the left- and right-limits of $v$ at time $t_n$.
{A function $v\in\Vk $ also admits a right-limit $v(t_{0}^+)=v|_{I_1}$ at $t_0=0$ and a left-limit $v(t_{\Nk}^-)=v|_{I_{\Nk}}$ at $t_{\Nk}=T$.}
Note that since $v\in \mathbb{V}_k$ is piecewise constant, we have $v(t_n^{-})=v|_{I_n}$ for all $n\in\{1,\dots,\Nk\}$, and that $v(t_{n}^+)=v|_{I_{n+1}}$ for all $n\in\{0,\dots,\Nk-1\}$.
{In the following, it will help to also consider spaces of functions that are defined everywhere on $[0,T]$ by left- or right-continuity.
For a function $v\colon [0,T]\tends V_k$ with $v|_{(0,T)}\in \Vk$, we say that $v$ is left-continuous if $v(t_n)=v(t_{n}^-)$ for all $n=1,\dots,\Nk$, and that $v$ is right-continuous if $v(t_n)=v(t_{n}^+)$ for all $n=0,\dots,\Nk-1$.}
We then define
\begin{subequations}
\begin{align}
&\timeVkforward\coloneqq \left\{v\colon[0,T]\tends V_k,\; v|_{(0,T)}\in \Vk, \text{ and }v\text{ is {left}-continuous}\right\},\\
&\timeVkbackward\coloneqq \left\{v\colon[0,T]\tends V_k,\; v|_{(0,T)}\in \Vk, \text{ and }v\text{ is {right}-continuous}\right\}.
\end{align}
\end{subequations}
Note that here we adopt the point of view that functions in $\Vkpm$ are defined at all points in $[0,T]$, and two functions in $\Vkpm $ that agree up to a subset of measure zero of $[0,T]$ are not identified. When $v\in\timeVkforward$ and $w\in\timeVkbackward$, we set $v(t_0^{-})\coloneqq v(0)$ and $w(t_{\Nk}^+)\coloneqq w(T)$. For $v\in \mathbb{V}_k$, we define the jump $\jump{v}_{n}$ of $v$ at time $t_n\in(0,T)$ by
\begin{equation}
\jump{v}_n\coloneqq v(t_n^{-})-v(t_n^{+}).
\end{equation}
We define also the jump $\jump{v}_{0}=v(0)-v(0^+)$ for $v\in\Vkp$ , and $\jump{w}_{\Nk}=w(T^-)-w(T)$ for $w\in \Vkm$.

\paragraph{Reconstruction of time derivatives}

Let the \emph{forward-in-time} reconstruction operator $\mathcal{I}_+:\timeVkforward\to {C[0,T;V_k]}$ be defined by 
\begin{equation}
(\Ip v)(t)\coloneqq {v(t)+\frac{t_n-t}{\tau_k}\jump{v}_{n-1}},\quad t\in {(t_{n-1},t_n]},\;n\in\{1,\cdots,\Nk\}, \;v \in \Vkp,
\end{equation}
{where $C[0,T;V_k]$ is the space of continuous functions from $[0,T]$ to $V_k$.}
We also define the \emph{backward-in-time} reconstruction operator $\In :\timeVkbackward\to C[0,T;V_k]$ by 
\begin{equation}
 (\In w)(t)\coloneqq {w(t)-\frac{t-t_{n-1}}{\tau_k}\jump{w}_{n}},\quad {t\in [t_{n-1},t_n)},\; n\in\{1,\cdots,\Nk\}, \; w\in\Vkm.
 \end{equation}
As there is no risk of confusion, we omit the dependence of $\Ip$ and $\In$ on $k$ to simplify the notation.
Since $v\in\Vkp$ is by definition left-continuous, it is easy to check that $\Ip v$ is continuous on $[0,T]$ and satisfies $\Ip v(t_{n})=v(t_n)=v(t_{n}^-)$ for all $v\in\Vkp$ and $n\in\{0,\dots,\Nk\}$. In particular, $\Ip v(0)=v(0)$. Similarly, $\In w$ is continuous on $[0,T]$ and $\In w(t_n)=w(t_n)=w(t_{n}^+)$ for all $n\in\{0,\dots,\Nk\}$ and $\In v(T)=v(T)$.
It is also straightforward to verify that $\Ip v$ and $\In w$ are piecewise continuously differentiable with respect to $\mathcal{J}_k$ for any $v\in\Vkp$ and any $w\in\Vkm$, and that
  \begin{equation}\label{derivatives_Ip}
 \begin{aligned}
\p_t \Ip v|_{I_n}= -\frac{1}{\tau_k}\jump{v}_{n-1} , && \p_t \In w|_{I_n} = -\frac{1}{\tau_k} \jump{w}_n, && n\in\{1,\dots,\Nk\}.
\end{aligned}
 \end{equation}
This shows that $\p_t \Ip$ and $\p_t \In$ are respectively related to the first-order backward and forward difference operators.
Furthermore, the operators $\Ip$ and $\In$ satisfy the discrete integration-by-parts formula
	\begin{equation}\label{eq:discrete_ibp}
		\begin{split}
			\int_0^T\left(\partial_t\Ip v,w\right)_{\Omk} + \left(v ,\p_t \In w\right)_{\Omk}\mathrm{d}t=(v(T),w(T))_{\Omk}-(v(0),w(0))_{\Omk}.
		\end{split} 
	\end{equation}
	for	all $(v,w)\in \timeVkforward\times\timeVkbackward$.

\paragraph{Discrete norms}
	Let the norms $\norm{\cdot}_{\Vkp}$ on $\Vkp$ and $\norm{\cdot}_{\Vkm}$ on $\Vkm$ be defined by
\begin{subequations}\label{eq:discrete_vkp_combined}
\begin{align}
&\norm{v}_{\Vkp}^2 \coloneqq \int_0^T \norm{\p_t \Ip v}_{\dualk}^2 + \norm{\nabla v}_\Omega^2 \mathrm{d}t + \norm{v(0)}_{\Omk}^2 \quad \forall v \in \Vkp,\label{eq:discrete_vkp_norm} \\
& \norm{w}_{\Vkm}^2 \coloneqq \int_0^T \norm{\p_t \In w}_{\dualk}^2 + \norm{\nabla w}_\Omega^2 \mathrm{d}t + \norm{w(T)}_{\Omk}^2 \quad \forall w \in \Vkm. \label{eq:discrete_vkm_norm}
\end{align}
\end{subequations}

The following result shows that the norms $\norm{\cdot}_{\Vkp}$ and $\norm{\cdot}_{\Vkm}$ are upper bounds on the $L^\infty(L^2)$ norm.
\begin{lemma}[Boundedness in $L^\infty(L^2)$]\label{lem:vkp_infty_bound}
We have $\norm{v(t)}_{\Omega}\leq \norm{v}_{\Vkpm}$ for all $v\in\Vkpm$, and all $t\in[0,T]$.
\end{lemma}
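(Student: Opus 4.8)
The plan is to prove the bound for functions in $\Vkp$ first; the case of $\Vkm$ is entirely symmetric under time reversal. The overall strategy is a discrete energy (telescoping) argument. Since $v\in\Vkp$ is piecewise constant in time, the quantity $\norm{v(t)}_\Omega$ takes only finitely many values, namely $\norm{v(t_n)}_\Omega$ for $n\in\{0,\dots,\Nk\}$ (with $v(t_0)=v(0)$), so it suffices to bound these nodal values. Moreover, by the first inequality in~\eqref{eq:L^2-mass_lumped_bound}, which crucially carries no hidden constant, we have $\norm{v(t_n)}_\Omega\leq \norm{v(t_n)}_{\Omk}$, so it is enough to control the mass-lumped norm $\norm{v(t_n)}_{\Omk}$ by $\norm{v}_{\Vkp}$. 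The case $n=0$ is immediate since $\norm{v(0)}_{\Omk}^2$ appears as a summand in $\norm{v}_{\Vkp}^2$, cf.~\eqref{eq:discrete_vkp_norm}.

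First I would telescope, writing $\norm{v(t_n)}_{\Omk}^2 = \norm{v(0)}_{\Omk}^2 + \sum_{j=1}^n \bigl(\norm{v(t_j)}_{\Omk}^2 - \norm{v(t_{j-1})}_{\Omk}^2\bigr)$ and applying the elementary identity $\norm{a}_{\Omk}^2-\norm{b}_{\Omk}^2 = 2(a-b,a)_{\Omk} - \norm{a-b}_{\Omk}^2 \leq 2(a-b,a)_{\Omk}$ with $a=v(t_j)$ and $b=v(t_{j-1})$. This yields $\norm{v(t_n)}_{\Omk}^2 \leq \norm{v(0)}_{\Omk}^2 + 2\sum_{j=1}^n (v(t_j)-v(t_{j-1}),v(t_j))_{\Omk}$. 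The crucial step is then to recognise the right-hand sum as a time integral of $(\p_t\Ip v, v)_{\Omk}$: by~\eqref{derivatives_Ip} the backward difference satisfies $v(t_j)-v(t_{j-1}) = \tau_k\,\p_t\Ip v|_{I_j}$, and since $v\in\Vkp$ is left-continuous it is constant and equal to $v(t_j)$ on $I_j$, so $\tau_k\,(v(t_j)-v(t_{j-1}),v(t_j))_{\Omk} = \int_{I_j}(\p_t\Ip v, v)_{\Omk}\,\dd t$. Summing over $j$ gives $\norm{v(t_n)}_{\Omk}^2 \leq \norm{v(0)}_{\Omk}^2 + 2\int_0^{t_n}(\p_t\Ip v, v)_{\Omk}\,\dd t$.

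Finally I would estimate the integrand through the discrete dual norm: by the definition~\eqref{eq:dualknorm}, $(\p_t\Ip v, v)_{\Omk} \leq \norm{\p_t\Ip v}_{\dualk}\,\norm{\nabla v}_\Omega$, and Young's inequality bounds this by $\tfrac12\norm{\p_t\Ip v}_{\dualk}^2 + \tfrac12\norm{\nabla v}_\Omega^2$. Extending the integral from $[0,t_n]$ to $[0,T]$ and comparing with~\eqref{eq:discrete_vkp_norm} then gives $\norm{v(t_n)}_{\Omk}^2 \leq \norm{v(0)}_{\Omk}^2 + \int_0^T \norm{\p_t\Ip v}_{\dualk}^2 + \norm{\nabla v}_\Omega^2\,\dd t = \norm{v}_{\Vkp}^2$, which completes the argument. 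For $w\in\Vkm$ the same computation, run backwards from $t=T$ and telescoping down from $\norm{w(T)}_{\Omk}^2$, applies verbatim, now using right-continuity so that $w$ equals $w(t_{n-1})$ on $I_n$ and invoking~\eqref{eq:discrete_vkm_norm}.

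The main obstacle, though minor, is the bookkeeping: one must carefully match the constant value of the function on each interval $I_j$ to the correct nodal value, i.e.\ the right endpoint $v(t_j)$ for the left-continuous functions in $\Vkp$ and the left endpoint $w(t_{n-1})$ for the right-continuous functions in $\Vkm$, so that the telescoped cross-term coincides \emph{exactly} with the time integral of $(\p_t\Ip v, v)_{\Omk}$ (respectively $-(\p_t\In w, w)_{\Omk}$). The discarded term $-\norm{v(t_j)-v(t_{j-1})}_{\Omk}^2\leq 0$ is precisely what turns the identity into an inequality in the favourable direction, and it is the absence of a constant in the first bound of~\eqref{eq:L^2-mass_lumped_bound} that ensures the final estimate holds with constant exactly one, as stated.
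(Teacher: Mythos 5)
Your proposal is correct and follows essentially the same route as the paper: reduce to the nodal values using piecewise constancy and the constant-free first inequality in~\eqref{eq:L^2-mass_lumped_bound}, identify the telescoped cross-terms with $2\int_0^{t_n}(\p_t\Ip v,v)_{\Omk}\,\dd t$, and finish with the dual-norm Cauchy--Schwarz bound and Young's inequality. The only cosmetic difference is that the paper records the exact energy identity (with the nonnegative jump terms $\sum_i\norm{\jump{v}_i}_{\Omk}^2$ written out) and then discards them, whereas you discard the equivalent term $-\norm{v(t_j)-v(t_{j-1})}_{\Omk}^2$ at each step of the telescoping; these are the same computation.
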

\begin{proof}
It is enough to prove the result in the case of $\Vkp$ since the case of $\Vkm$ follows the same argument up to a change o{}f the time variable. Let $v\in \Vkp$ be arbitrary.
The inequality $\norm{v(0)}_{\Omega}\leq \norm{v}_{\Vkp}$ follows immediately from~\eqref{eq:L^2-mass_lumped_bound} and~\eqref{eq:discrete_vkp_norm}. It remains only to consider the case $t\in(0,T]$. Since $v$ is piecewise constant in time and left-continuous, there exists a $n\in\{1,\dots, \Nk\}$ such that $t\in (t_{n-1},t_n]$ and~$v(t)=v|_{I_n}$.
The first inequality in~\eqref{eq:L^2-mass_lumped_bound} and the  identity $2\int_0^{t_n} (\p_t \Ip v,v)_{\Omk}\mathrm{d}t = \norm{v|_{I_n}}_{\Omk}^2 + \sum_{i=0}^{n-1} \norm{\jump{v}_i}_{\Omk}^2 - \norm{v(0)}_{\Omk}^2$ for any $n\in\{1,\dots,\Nk\}$ imply that
\[
\norm{v(t)}_{\Omega}^2=\norm{v|_{I_n}}_\Omega^2\leq \norm{v|_{I_n}}_{\Omk}^2\leq 2 \int_0^{t_n} (\p_t \Ip v,v)_{\Omk}\mathrm{d}t+ \norm{v(0)}_{\Omk}^2 \leq \norm{v}_{\Vkp}^2,
\] 
where in the final inequality we have used $(\p_t \Ip v,v)_{\Omk}\leq \norm{\p_t \Ip v}_{\dualk}\norm{\nabla v}_\Omega$ for a.e $t\in (0,T)$ and Young's inequality.
\end{proof}

\section{Finite element approximation and main results}\label{sec5}
We now introduce the finite element discretization of~\eqref{eq:weakform-space-time}.
For each $k\in \N$, let $\Dk \in L^\infty(\Omega;\R^{\dim\times\dim})$ of the form~\eqref{edge-tensor-formula} be as in~Theorem~\ref{DMP-edge-stabilisation-result}, and recall that $A_k\coloneqq \nu \mathbb{I}+\Dk$ and that $\norm{\Dk}_{L^\infty(\Omega;\R^{\dim\times\dim})}\lesssim h_k$.
For each $k\in\mathbb{N}$, the discrete problem is to find $(u_{k},m_{k})\in {\Vk}\times\timeVkforward$ such that, {for some} $\tilde{b}_{k}\in {\Dp} H[u_{k}]$, 
\begin{subequations}\label{weakform-space-time-discrete}
	\begin{gather}
		\begin{split} 
			\int_0^T\left(\partial_t\mathcal{I}_+\psi,u_{k}\right)_{\Omk}+(A_k\nabla u_{k},\nabla \psi)_{\Omega}+&(H[\nabla u_{k}],\psi)_{\Omega} \mathrm{d}t
			\\
			=&\int_0^T\langle {F}[m_{k}],\psi\rangle\mathrm{d}t
			+\left(R_kS[m_{k}(T)],\psi(T)\right)_{\Omk},
		\end{split}\label{weakform1-space-time-discrete}
		\\ 
		\begin{split}
		\int_0^T\left(\partial_t\mathcal{I}_+m_{k},\phi\right)_{\Omk}+(A_k\nabla m_{k},\nabla \phi)_{\Omega}+(m_{k}\tilde{b}_{k},\nabla \phi)_{\Omega} \mathrm{d}t
		=\int_0^T\langle {G},\phi\rangle_{}\mathrm{d}t,
		\end{split}\label{weakform2-space-time-discrete}
	\end{gather} 
\end{subequations}
for all $(\psi,\phi)\in {\mathbb{V}_{k,0}^{+}}\times \mathbb{V}_{k}$, and such that {$m_{k}(0) =  R_k m_0 \in V_k$.} 

\begin{remark}
The FEM is presented above in a space-time variational form, as this reflects the coupled space-time nature of the problem, offers a succinct notation, and is also most convenient for the discrete functional analytic framework set out below.
By choosing test functions supported on individual time-step intervals, it is easily seen that the discretization consists of a backward-in-time implicit Euler discretization  for the HJB equation and a forward-in-time implicit Euler discretization for the KFP equation. Therefore, for practical computation the system can be re-written in a coupled forward-backward time-stepping form. 
The efficient solution of these coupled discrete systems remains an important challenge which is beyond the scope of this work, however one promising approach is to consider time- or space-time-parallel solvers, see~\cite{NeumullerSmears19} and the references therein. 
\end{remark}

	Observe that if $(u_k,m_k)\in \Vk\times \Vkp $ solves~\eqref{weakform-space-time-discrete}, then we can adopt a different point of view and re-define $u_k$ to a right-continuous function in $\Vkm$ (without change of notation) by setting $u_k(T)\coloneqq R_k S[m_k(T)]$ and $u_k(t_{n}^{+})\coloneqq u_k|_{I_n}$ for all $n\in\{0,\dots,M_{k}-1\}$.
	 Using the discrete integration-by-parts identity~\eqref{eq:discrete_ibp}, we can equivalently rewrite~\eqref{weakform1-space-time-discrete} as
	\begin{equation}\label{eq:strong_in_time_discrete_HJB}
	\int_0^T - \left(\psi,\p_t\In u_k\right)_{\Omk}+(A_k\nabla u_{k},\nabla \psi)_{\Omega}+(H[\nabla u_{k}],\psi)_{\Omega} \mathrm{d}t= \int_0^T\langle {F}[m_{k}],\psi\rangle \mathrm{d}t
	\end{equation}
	for all $\psi \in \mathbb{V}_{k,0}^+$. Since the terms in~\eqref{eq:strong_in_time_discrete_HJB} depend only on $\psi|_{(0,T)}$, it is clear that the set of test functions in~\eqref{eq:strong_in_time_discrete_HJB} can then be extended to all $\psi \in \Vk$.
	In practical terms for computation, there is no difference between these two points of view.

\subsection{Main results}
The first main result states the existence of a  solution to the discrete problem and gives a stability bound for any such discrete solution in terms of the data of the problem.
 \begin{theorem}[Existence of discrete solutions {and stability bound}]\label{discrete-mfg-existence}
	{F}or every $k\in\mathbb{N}$ there exists a discrete pair $(u_{k},m_{k})\in \timeVkbackward\times\timeVkforward$ satisfying \eqref{weakform-space-time-discrete}.
{Any solution $(u_k,m_k)$ of \eqref{weakform-space-time-discrete} satisfies
\begin{equation}\label{eq:discrete_mfg_existence}
\norm{u_k}_{\Vkm}+\norm{m_k}_{\Vkp} \lesssim 1 + \norm{G}_{L^2(0,T;H^{-1}(\Omega))}+\norm{m_0}_\Omega.
\end{equation}}
\end{theorem}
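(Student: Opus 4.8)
The plan is to establish the a~priori bound~\eqref{eq:discrete_mfg_existence} first, since it holds for \emph{any} solution of~\eqref{weakform-space-time-discrete} and simultaneously furnishes the bounded set on which a fixed-point argument for existence can be run. The estimate decouples: I would bound $\norm{m_k}_{\Vkp}$ first, which is possible because the drift $\tilde{b}_k$ enters the KFP equation~\eqref{weakform2-space-time-discrete} only through a term controlled uniformly by $\norm{\tilde{b}_k}_{L^\infty}\le L_H$ (Lemma~\ref{Prop1-time}), \emph{independently of} $u_k$; the resulting bound on $m_k$ in terms of $m_0$ and $G$ alone is what will make the self-mapping property in the fixed-point step automatic. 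I would then bound $\norm{u_k}_{\Vkm}$ using the data together with the already-established control of $m_k$.

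For the KFP bound I would test~\eqref{weakform2-space-time-discrete} with $\phi=m_k$ restricted to $(0,t_n)$. The time-derivative term is handled by the energy identity recorded in the proof of Lemma~\ref{lem:vkp_infty_bound}, the diffusion term is coercive since $A_k\ge\nu\mathbb{I}$, the advective term $(m_k\tilde{b}_k,\nabla m_k)_\Omega$ is bounded by $L_H\norm{m_k}_\Omega\norm{\nabla m_k}_\Omega$, and the source by $\norm{G}_{H^{-1}(\Omega)}\norm{\nabla m_k}_\Omega$. Absorbing the gradient contributions into $\nu\norm{\nabla m_k}_\Omega^2$ by Young's inequality leaves a lower-order $\int\norm{m_k}_\Omega^2$ term, which a discrete Gr\"onwall inequality in $n$ controls; here the implicit-Euler Gr\"onwall factor stays bounded away from zero precisely because of the time-step assumption~\eqref{time-step-size}. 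Using $\norm{m_k(0)}_{\Omk}=\norm{R_km_0}_{\Omk}\le\norm{m_0}_\Omega$ (from~\eqref{eq:Rk_riesz}) this yields uniform control of $\norm{m_k}_{L^\infty(L^2)}$ and of $\int_0^T\norm{\nabla m_k}_\Omega^2\,\mathrm{d}t$, and the dual-norm term $\int_0^T\norm{\partial_t\mathcal{I}_+ m_k}_{\dualk}^2\,\mathrm{d}t$ is recovered from the equation by testing with arbitrary $v\in V_k$ on each interval and using $\norm{A_k}_{L^\infty}\lesssim1$ and $\norm{\tilde{b}_k}_{L^\infty}\le L_H$.

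For the HJB bound I would use the right-continuous form~\eqref{eq:strong_in_time_discrete_HJB}, test with $u_k$ on $(t_n,T)$, and apply a \emph{backward} discrete Gr\"onwall. The one delicate point is the Hamiltonian term $(H[\nabla u_k],u_k)_\Omega$: since $\abs{H(t,x,\nabla u_k)}\lesssim1+\abs{\nabla u_k}$ by~\eqref{eq:Lipschitz_H}, I would bound it by $(1+\norm{\nabla u_k}_\Omega)\norm{u_k}_\Omega$ and use Young's inequality so that only a small multiple of $\norm{\nabla u_k}_\Omega^2$ is generated, \emph{deliberately retaining} $\norm{u_k}_\Omega^2$ rather than invoking Poincar\'e's inequality --- the latter would reintroduce $\norm{\nabla u_k}_\Omega^2$ and destroy coercivity, whereas the former is lower order in time and is absorbed by the Gr\"onwall step. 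The coupling data are controlled by the KFP bounds already obtained: $\norm{F[m_k]}_{L^2(0,T;H^{-1}(\Omega))}\lesssim1+\norm{m_k}_{\QT}$ by~\eqref{F-boundedness}, and $\norm{u_k(T)}_{\Omk}=\norm{R_kS[m_k(T)]}_{\Omk}\lesssim1+\norm{m_k(T)}_\Omega$ by~\eqref{S2} and the $L^2$-stability of $R_k$. The dual-norm contribution to $\norm{u_k}_{\Vkm}$ is recovered from the equation as before, which completes~\eqref{eq:discrete_mfg_existence}.

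For existence, the bound confines the problem to a closed ball $B_R\subset\Vkp$ whose radius $R$ is the right-hand side of~\eqref{eq:discrete_mfg_existence}. For fixed $m\in\Vkp$, the stabilized mass-lumped implicit-Euler HJB equation~\eqref{weakform1-space-time-discrete} admits a unique solution $u=u(m)$, by the monotonicity of $H$ together with the bijectivity of the stabilized operators from Theorem~\ref{DMP-edge-stabilisation-result}, and $m\mapsto u(m)$ is continuous in the finite-dimensional topology. The crux is the set-valued, nonsmooth dependence of the admissible drifts $\tilde{b}\in\DpH[u(m)]$, which renders the induced density map set-valued. I would prefer the regularization route, which sidesteps verifying convexity of images: replace $H$ by a $C^1$ convex Hamiltonian $H_\varepsilon$ with single-valued $L_H$-Lipschitz gradient, solve the resulting single-valued finite-dimensional problem by Brouwer's theorem (the self-mapping being immediate from the input-independent KFP bound), and pass to the limit $\varepsilon\to0$; the uniform bound~\eqref{eq:discrete_mfg_existence} gives a convergent subsequence, the drifts $\nabla_pH_\varepsilon[u_\varepsilon]$ are uniformly bounded and converge weakly in $L^2(\QT;\R^d)$ along a subsequence, and a closure property for the subdifferential (proved as in Lemma~\ref{closure}) guarantees that the limiting drift lies in $\DpH[u]$, while continuity of $F$ and $S$ and uniform convergence $H_\varepsilon\to H$ handle the remaining terms. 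Alternatively one may apply Kakutani's theorem directly to the set-valued density map, the only nontrivial hypothesis being convexity of its values; this passage --- transferring the fixed point back to the nonsmooth inclusion via stability and closure of the subdifferential selections --- is the step I expect to be the main obstacle, whether it is circumvented by regularization or verified directly.
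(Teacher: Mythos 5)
Your proposal is correct in substance but takes a genuinely different route from the paper on the existence part. The paper applies Kakutani's fixed point theorem \emph{directly to the drift}: it works with the set-valued map $\tilde{b}\mapsto \mathcal{D}_pH\big[\mathcal{U}_k[\mathcal{M}_k[\tilde{b}]]\big]$ on the weak-$*$ compact ball $\mathcal{B}=\{\tilde{b}:\|\tilde{b}\|_{L^\infty(Q_T;\R^d)}\leq L_H\}$, so that convexity and closedness of the images come for free from the subdifferential structure (Lemma~\ref{Prop1-time}, convexity of $\partial_pH$, and Lemma~\ref{closure}), and the only real work is the closed-graph verification of upper semicontinuity, done via the finite-dimensionality of $\Vkp$ and the continuous-dependence bounds of Lemmas~\ref{KFP_wellposedness_discrete} and~\ref{HJB_wellposedness_discrete}. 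Your observation that fixed-pointing on the \emph{density} map would founder on convexity of its values is exactly right --- the map $\tilde{b}\mapsto\mathcal{M}_k[\tilde{b}]$ is nonlinear in the drift, so the image of the convex set $\DpH[u]$ need not be convex --- and this is precisely why the paper fixed-points on the drift instead; your preferred regularization-plus-Brouwer route sidesteps the issue entirely and is also viable (it is the discrete analogue of the strategy in the cited regularization paper), at the cost of constructing $H_\varepsilon$, an extra limit passage $\varepsilon\to 0$, and a mildly generalized closure lemma (harmless here since everything lives in a fixed finite-dimensional space, so $\nabla u_\varepsilon\to\nabla u_k$ pointwise). For the stability bound the paper simply invokes the drift-uniform KFP estimate \eqref{eq:KFP_apriori_bound} and the HJB continuous-dependence estimate \eqref{HJB_cont_dep_discrete}, both proved in Appendix~\ref{sec:app:discrete_wellposedness} via weighted inf-sup identities in the norms $\|\cdot\|_{\Xk}$, $\|\cdot\|_{\Yk}$ rather than by your energy-plus-discrete-Gr\"onwall argument; both approaches rest on the same structural point you identify, namely that the density bound is independent of $u_k$ because $\|\tilde{b}_k\|_{L^\infty}\leq L_H$, and both consume the time-step restriction \eqref{time-step-size} in the same place (absorbing the zeroth-order term at the current time level). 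What the paper's route buys is brevity once the well-posedness lemmas are in place; what yours buys is elementarity (Brouwer rather than Kakutani) and an explicit link to the regularized PDE systems.
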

The proof of Theorem~\ref{discrete-mfg-existence} is given in Section~\ref{sec6}.
The next main result shows that the uniqueness of the numerical solution holds under the same assumptions as those that were considered in the continuous setting of Theorem~\ref{uniqueness-space-time}.
\begin{theorem}[Uniqueness]\label{discrete-mfg-uniqueness}
	Suppose that the initial density $m_0\in \LOm$ is nonnegative a.e.\ in $\Omega$, the source term $G$ is nonnegative in the sense of distributions in $L^2(0,T;H^{-1}(\Omega))$, the coupling term $F$ is strictly monotone on $X$, and that the terminal cost $S$ is monotone on $\LOm$. Then, for every $k\in\mathbb{N}$, there exists a unique solution  $(u_{k},m_{k})\in \timeVkbackward\times\timeVkforward$ to \eqref{weakform-space-time-discrete}. 
\end{theorem}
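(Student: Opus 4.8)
The plan is to mirror the proof of the continuous uniqueness result, Theorem~\ref{uniqueness-space-time}, replacing each of its analytic ingredients by a discrete counterpart. Since existence is already provided by Theorem~\ref{discrete-mfg-existence}, it suffices to show that there is at most one solution. Suppose then that $(u_1,m_1)$ and $(u_2,m_2)$ are two solutions of~\eqref{weakform-space-time-discrete}, with associated transport fields $\tilde{b}_1\in\DpH[u_1]$ and $\tilde{b}_2\in\DpH[u_2]$.

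First I would establish the discrete analogue of the weak maximum principle, namely that $m_i\geq 0$ in $\QT$ for $i\in\{1,2\}$. Writing the KFP equation~\eqref{weakform2-space-time-discrete} in per-time-step form by choosing test functions supported on a single interval $I_n$, the update for $m_i|_{I_n}$ is governed by the operator $w\mapsto \frac{1}{\tau_k}(w,\cdot)_{\Omk} + \langle L_i^* w,\cdot\rangle$, where $L_i\in W(V_k,\Dk)$ is the operator of the form~\eqref{L-operator} with vector field $\tilde{b}_i$ (noting that the convective term $(w\tilde b_i,\nabla\cdot)_\Omega$ is precisely the adjoint form). By Theorem~\ref{DMP-edge-stabilisation-result} and the remark that follows it, this operator satisfies the DMP. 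Since $R_k$ preserves nonnegativity, the initial datum satisfies $m_i(0)=R_k m_0\geq 0$; moreover $(w,\xi_j)_{\Omk}=(\xi_j,1)_\Omega\,w(x_j)\geq 0$ whenever $w\geq 0$, and $\langle G,\xi_j\rangle\geq 0$ since $G$ is nonnegative in the sense of distributions and $\xi_j\geq 0$. Hence the right-hand side of each step tested against $\xi_j$ is nonnegative whenever the previous step is nonnegative, and forward induction from $m_i(0)\geq 0$ yields $m_i\geq 0$ in $\QT$.

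Then I would choose $\psi=m_1-m_2$ and $\phi=u_1-u_2$ as test functions in the differences of the HJB equation~\eqref{weakform1-space-time-discrete} and the KFP equation~\eqref{weakform2-space-time-discrete}, respectively. Note that $m_1-m_2\in\Vkpo$ because both densities share the initial value $R_k m_0$, while $(u_1-u_2)|_{(0,T)}\in\Vk$. Subtracting the two resulting identities cancels the discrete time-derivative terms $(\p_t\Ip(m_1-m_2),u_1-u_2)_{\Omk}$ and, using the symmetry of $A_k$, the diffusion terms $(A_k\nabla(\cdot),\nabla(\cdot))_\Omega$ as well. Converting the terminal coupling via the Riesz identity~\eqref{eq:Rk_riesz}, so that $(R_k S[m_i(T)],(m_1-m_2)(T))_{\Omk}=(S[m_i(T)],m_1(T)-m_2(T))_\Omega$, I obtain exactly the discrete analogue of~\eqref{7'-time},
\[
\int_0^T\int_\Omega m_1\lambda_{12}+m_2\lambda_{21}\,\dd x\,\dd t = \int_0^T\langle F[m_1]-F[m_2],m_1-m_2\rangle\dd t + (S[m_1(T)]-S[m_2(T)],m_1(T)-m_2(T))_\Omega,
\]
with $\lambda_{ij}:=-H[\nabla u_j]+H[\nabla u_i]+\tilde b_i\cdot\nabla(u_j-u_i)$. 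The definition of the subdifferential gives $\lambda_{ij}\leq 0$ a.e., and combined with $m_i\geq 0$ the left-hand side is nonpositive; the monotonicity of $S$ makes the terminal term nonnegative, so strict monotonicity of $F$ on $X$ forces $m_1=m_2$ in $X$, hence $m_1=m_2$ in $\Vkp$ since both are piecewise constant in time.

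Finally, with $m_1=m_2$ the two value functions $u_1,u_2$ solve~\eqref{weakform1-space-time-discrete} with identical data and terminal condition, so it remains to invoke uniqueness of the discrete HJB solution, which I expect to be the main obstacle as it is the nonlinear part of the argument. I would handle it through the per-time-step structure: each implicit Euler step is a stationary problem for the operator $z\mapsto \frac{1}{\tau_k}(z,\cdot)_{\Omk}+(A_k\nabla z,\nabla\cdot)_\Omega+(H[\nabla z],\cdot)_\Omega$, which is strictly monotone. Indeed, testing the difference of two solutions with their difference, the mass and diffusion terms dominate the Lipschitz perturbation $(H[\nabla z_1]-H[\nabla z_2],z_1-z_2)_\Omega$, using $\abs{H[\nabla z_1]-H[\nabla z_2]}\leq L_H\abs{\nabla(z_1-z_2)}$, the constant-free bound $\norm{\cdot}_\Omega\leq\norm{\cdot}_{\Omk}$ from~\eqref{eq:L^2-mass_lumped_bound}, Young's inequality, and the time-step restriction $\tau_k\leq\tau_1<\nu L_H^{-2}$ from~\eqref{time-step-size}. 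Stepping backward from the common terminal value $u_i(T)=R_k S[m(T)]$ then yields $u_1=u_2$, completing the argument.
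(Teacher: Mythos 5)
Your proposal is correct and follows the same overall strategy as the paper's proof: nonnegativity of the densities via the discrete maximum principle, the Lasry--Lions duality identity with the subdifferential terms $\lambda_{ij}\leq 0$, monotonicity of $F$ and $S$ to conclude $m_1=m_2$ (and $m_1(T)=m_2(T)$ by left-continuity in $\Vkp$), and finally uniqueness for the discrete HJB equation with common data. The one place where you genuinely deviate is that last step: the paper simply invokes Lemma~\ref{HJB_wellposedness_discrete}, whose proof in Appendix~\ref{sec:app:discrete_wellposedness} is a global-in-time Banach fixed-point argument in exponentially weighted norms, whereas you re-derive uniqueness one implicit Euler step at a time from the strict monotonicity of the stationary operator $z\mapsto \frac{1}{\tau_k}(z,\cdot)_{\Omk}+(A_k\nabla z,\nabla\cdot)_\Omega+(H[\nabla z],\cdot)_\Omega$. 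Your computation checks out: Young's inequality gives $(H[\nabla z_1]-H[\nabla z_2],z_1-z_2)_\Omega\geq -\nu\norm{\nabla(z_1-z_2)}_\Omega^2-\tfrac{L_H^2}{4\nu}\norm{z_1-z_2}_{\Omk}^2$, the term $(A_k\nabla(z_1-z_2),\nabla(z_1-z_2))_\Omega\geq \nu\norm{\nabla(z_1-z_2)}_\Omega^2$ absorbs the gradient contribution, and what remains forces $z_1=z_2$ whenever $\tau_k<4\nu L_H^{-2}$, which holds a fortiori under \eqref{time-step-size}; backward induction from the common terminal value $R_kS[m(T)]$ then yields $u_1=u_2$. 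This local argument is more elementary and self-contained than the paper's weighted inf-sup machinery, though the latter also delivers the quantitative stability bound \eqref{HJB_cont_dep_discrete} that the convergence analysis needs elsewhere. Likewise, your per-time-step induction for $m_i\geq 0$ (using that the KFP step operator is $\frac{1}{\tau_k}(\cdot,\cdot)_{\Omk}+L_i^*$ with $L_i\in W(V_k,\Dk)$, that $R_k$ preserves nonnegativity, and that $G$ tested against $\xi_j\mathbf{1}_{I_n}\geq 0$ is nonnegative) just makes explicit what the paper compresses into a citation of Theorem~\ref{DMP-edge-stabilisation-result}.
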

The proof of Theorem~\ref{discrete-mfg-uniqueness} is given in Section~\ref{sec6}.

Our main convergence result shows that strong convergence of the value function approximations in $L^2(H^1_0)$, and of the density approximations in $L^p(L^2)$ for any $1\leq p < \infty$, along with weak convergence of the density approximations in $L^2(H^1_0)$.

\begin{theorem}[Convergence of FEM]\label{conv-main-thm}
	Assume that the hypotheses of Theorem \ref{discrete-mfg-uniqueness} hold. Then, there exists a unique solution $(u,m)\in Y\times Y$ in the sense of Definition \ref{weakdef-space-time}. Moreover, we have
	\begin{subequations}\label{eq:eulerFEMconv}
	\begin{align}
		&{u}_{k} \to u\quad\text{in}\quad L^2(0,T;H_0^1(\Omega)), && {u}_{k} \to u\quad\text{in}\quad L^p(0,T;\LOm),\label{u-conv-3}
		\\
		&{m}_{k} \rightharpoonup m\quad\text{in}\quad L^2(0,T;H_0^1(\Omega)), && {m}_{k}\to m \quad \text{in}\quad  L^p(0,T;\LOm),\label{m-conv-2}
		\\
		&{u}_{k}(0) \to u(0)\quad\text{in}\quad \LOm, &&{m}_{k}(T)\to m(T) \quad \text{in}\quad  \LOm,\label{m-conv-3}
	\end{align}
	\end{subequations}
	as $k\tends \infty$, for any $1\leq p<\infty$. 
\end{theorem}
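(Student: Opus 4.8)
The plan is to use the uniform stability bound of Theorem~\ref{discrete-mfg-existence} to extract weakly convergent subsequences, to upgrade these to the claimed strong convergences using the coercivity and consistency of the scheme, and finally to invoke the uniqueness result of Theorem~\ref{uniqueness-space-time} to promote subsequential convergence to convergence of the full sequence. First, Theorem~\ref{discrete-mfg-existence} gives $\norm{u_k}_{\Vkm}+\norm{m_k}_{\Vkp}\lesssim 1$; together with Lemmas~\ref{lem:vkp_infty_bound} and~\ref{mass-lump-H10-dual-bound} this bounds $u_k,m_k$ in $L^\infty(0,T;\LOm)\cap L^2(0,T;H^1_0(\Omega))$ and their reconstructed time derivatives $\p_t\In u_k,\p_t\Ip m_k$ in $L^2(0,T;H^{-1}(\Omega))$. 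Passing to a subsequence, I would obtain weak limits $u_k\rightharpoonup u$, $m_k\rightharpoonup m$ in $L^2(0,T;H^1_0(\Omega))$, and, invoking the discrete compact-embedding results of Section~\ref{sec7}, the strong convergences $u_k\to u$, $m_k\to m$ in $L^p(0,T;\LOm)$ for every $p\in[1,\infty)$. Since Lemma~\ref{Prop1-time} bounds $\tilde b_k\in\DpH[u_k]$ by $L_H$ in $L^{\infty}(\QT;\R^\dim)$, I would also extract a weak limit $\tilde b_k\rightharpoonup\tilde b_*$ and a weak limit of $H[\nabla u_k]$ in $L^2(\QT)$.

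The main obstacle is the nonlinear terminal coupling: because $S$ is only continuous, passing to the limit in $(R_kS[m_k(T)],\psi(T))_{\Omk}$ needs strong convergence $m_k(T)\to m(T)$ in $\LOm$, which does not follow from the Bochner-space convergences above since $t=T$ is a single time. Here I would exploit a discrete parabolic-smoothing effect: testing the KFP equation with time-weighted functions (the weight vanishing at $t=0$) and absorbing the first-order term into the diffusion via $A_k\geq\nu\mathbb I$ and Young's inequality, I would derive a weighted discrete Bochner--Sobolev stability bound that controls $m_k(T)$ uniformly in $H^1_0(\Omega)$, even though the initial datum $R_km_0$ lies only in $\LOm$. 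The compact embedding $H^1_0(\Omega)\hookrightarrow\hookrightarrow\LOm$ then makes $\{m_k(T)\}$ precompact in $\LOm$, and a duality argument against solutions of the backward problem --- available because the stabilization is constructed to preserve the adjoint relationship between the two equations --- identifies every $\LOm$-limit point as $m(T)$. Hence $m_k(T)\to m(T)$ strongly in $\LOm$, and by continuity of $S$, $S[m_k(T)]\to S[m(T)]$ in $\LOm$.

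With this in hand I would establish the strong convergence $u_k\to u$ in $L^2(0,T;H^1_0(\Omega))$ by a difference energy argument. Letting $\hat u_k$ be a discrete approximation of $u$ with $\hat u_k\to u$ in $L^2(0,T;H^1_0(\Omega))$, I would bound $\nu\int_0^T\norm{\nabla(u_k-\hat u_k)}_\Omega^2\,\mathrm{d}t$ from above by $\int_0^T(A_k\nabla(u_k-\hat u_k),\nabla(u_k-\hat u_k))_\Omega\,\mathrm{d}t$ and expand the first factor using the discrete HJB equation. The crucial structural point is that in the HJB equation the Hamiltonian term $(H[\nabla u_k],\cdot)_\Omega$ and the coupling $\langle F[m_k],\cdot\rangle$ are tested against the \emph{value} $u_k-\hat u_k\rightharpoonup 0$, so they vanish in the limit by the strong $\LOm$ convergence of $u_k$ and of $F[m_k]$; the terminal contribution converges by the strong convergence of $S[m_k(T)]$; and the remaining diffusion cross-term vanishes because $A_k\to\nu\mathbb I$ and $\hat u_k\to u$. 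Coercivity then forces $u_k-\hat u_k\to0$, hence $u_k\to u$, strongly in $L^2(0,T;H^1_0(\Omega))$. (The analogous step fails for $m_k$ precisely because the transport term $(m_k\tilde b_k,\nabla\phi)_\Omega$ is tested against the \emph{gradient}, which is why only weak $H^1_0$ convergence is asserted for the density.) Armed with the strong convergence of $u_k$, I would apply Lemma~\ref{closure} to conclude $\tilde b_*\in\DpH[u]$, use the Lipschitz bound~\eqref{eq:Lipschitz_H} to get $H[\nabla u_k]\to H[\nabla u]$ in $L^2(\QT)$, and pass to the limit in~\eqref{weakform-space-time-discrete} --- replacing $(\cdot,\cdot)_{\Omk}$ by the exact $L^2$-inner product and $A_k$ by $\nu\mathbb I$ up to vanishing errors, and using that $\Ip,\In$ approximate the identity as $\tau_k\to0$ --- to show that $(u,m)$ satisfies~\eqref{eq:weakform-space-time} in the sense of Definition~\ref{weakdef-space-time}, with $m(0)=m_0$ and $u(T)=S[m(T)]$.

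Finally, since $(u,m)\in Y\times Y$ is a weak solution, Theorem~\ref{uniqueness-space-time} guarantees under the standing monotonicity hypotheses that it is the unique one; a standard contradiction argument then upgrades the subsequential convergences to convergence of the entire sequence, yielding~\eqref{eq:eulerFEMconv} and, as a byproduct, the existence claim of Theorem~\ref{existence-space-time}. The remaining strong convergence $u_k(0)\to u(0)$ in $\LOm$ I would obtain by the time-reversed analogue of the weighted-stability argument used for $m_k(T)$, applied to the backward HJB equation near $t=0$, now using that the terminal datum $R_kS[m_k(T)]$ converges strongly. I expect the weighted discrete stability estimate underlying the strong convergence of $m_k(T)$ to be the most delicate ingredient, as it must reproduce the parabolic regularization at the discrete level while remaining compatible with the mass-lumping, the stabilization, and the low regularity of the data.
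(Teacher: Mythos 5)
Your overall architecture matches the paper's: uniform stability from Theorem~\ref{discrete-mfg-existence}, the discrete compactness results of Section~\ref{sec7} for the $L^p(L^2)$ convergences, strong convergence of $m_k(T)$ as the pivotal step before treating the HJB equation, an energy argument for the strong $X$-convergence of $u_k$, Lemma~\ref{closure} to identify $\tilde{b}_*\in\DpH[u]$, and uniqueness to upgrade subsequential convergence. Your difference-energy argument for $u_k$ is a workable variant of the paper's, which tests the discrete HJB equation with $u_k$ itself and shows $\limsup_k\bigl[\tfrac12\norm{u_k(0)}_\Omega^2+\nu\norm{u_k}_X^2\bigr]\leq\tfrac12\norm{u(0)}_\Omega^2+\nu\norm{u}_X^2$; note that this yields $u_k(0)\to u(0)$ as a byproduct of the same computation, so your separate weighted argument near $t=0$ is unnecessary (and suffers from the same defect described next).

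There is, however, a genuine gap at the critical step. You propose to obtain precompactness of $\{m_k(T)\}$ in $\LOm$ from a uniform bound on $m_k(T)$ in $H^1_0(\Omega)$ derived by discrete parabolic smoothing. This cannot work under the standing hypotheses: the data are only $G\in L^2(0,T;H^{-1}(\Omega))$ and $m_0\in\LOm$, and already at the continuous level the terminal value $m(T)$ need not lie in $H^1_0(\Omega)$ --- a spectral computation shows that the solution map $G\mapsto m(T)$ sends $L^2(0,T;H^{-1}(\Omega))$ into $\LOm$ but not into $H^1_0(\Omega)$; one would need $G\in L^2(0,T;\LOm)$. Concretely, the time-weighted testing you describe (weight vanishing at $t=0$, tested against $m_k$) controls $\norm{m_k(T)}_\Omega$ and $\int_0^T t\norm{\nabla m_k}_\Omega^2\,\mathrm{d}t$, not $\norm{\nabla m_k(T)}_\Omega$; controlling the latter would require testing against $t\,\p_t m_k$ and hence $G\in L^2(0,T;\LOm)$. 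The paper's mechanism is different in kind: it proves \emph{norm} convergence $\norm{m_k(T)}_\Omega\to\norm{m(T)}_\Omega$ (which, combined with the weak convergence $m_k(T)\rightharpoonup m(T)$ already in hand, gives strong convergence) via the duality identity $\norm{m_k(T)}_{\Omk}^2=(m_k(0),z_k(0))_{\Omk}+\int_0^T\langle G,z_k\rangle\,\mathrm{d}t$, where $z_k\in\Vkm$ solves the discrete adjoint problem with terminal datum $m_k(T)$. The weighted estimate that is actually needed is $\int_0^T(T-t)\norm{\nabla(z-z_k)}_\Omega^2\,\mathrm{d}t\to0$ for the \emph{dual} solutions (whose terminal data are merely bounded in $\LOm$), used to identify the weak limit of $\tilde{b}_k\cdot\nabla z_k$ and to get $z_k(0)\to z(0)$ strongly --- not a smoothing bound for $m_k$ itself. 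Your closing remark that the weighted estimate is the most delicate ingredient is accurate, but the estimate you propose is not one that can be proved, and the compact-embedding route it is meant to feed is therefore unavailable.
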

The proof of Theorem~\ref{conv-main-thm} is given in~Section~\ref{sec8:proofs_of_convergence} where we present a compactness argument that uses functional analytic tools developed in Section \ref{sec7}. {We note that, if the conditions for uniqueness given in Theorem \ref{discrete-mfg-uniqueness} are absent, then the proof we present still establishes the convergence \eqref{eq:eulerFEMconv} along subsequences.}

Note that $m_k\rightharpoonup m$ in $L^2(0,T;H_0^1(\Omega))$ implies that $\nabla m_k$ converges weakly to $\nabla m$ in $L^2(\QT;\R^d)$. At present, it is not currently known if the weak convergence of $\nabla m_{k}$ to $\nabla m$ is also strong in general. 
The difficulty lies in the fact that for nondifferentiable Hamiltonians, the convergence $\nabla u_k \tends \nabla u$ is not sufficient to prove convergence in sufficiently strong norms of $\tilde{b}_{k}\in \mathcal{D}_pH[u_k]$ from \eqref{weakform-space-time-discrete} to $\tilde{b}_* \in \mathcal{D}_p H[u]$ from~\eqref{eq:weakform-space-time}.
The next result shows that if some pre-compactness of $\{\tilde{b}_{k}\}_{k\in\mathbb{N}}$ is assumed, then we obtain strong convergence of approximations of the gradient of the density.

\begin{corollary}[Strong $L^2(H_0^1)$-Convergence for Density Approximations]\label{convergence-cor}
	In addition to the hypotheses of Theorem \ref{conv-main-thm}, suppose that the sequence of transport vector fields $\{\tilde{b}_{k}\}_{k\in\mathbb{N}}$ from \eqref{weakform-space-time-discrete} is pre-compact in $L^1(Q_T;\R^\dim)$. Then, $m_{k}$ converges to $m$ strongly in $ L^2(0,T;H_0^1(\Omega))$ as $k\to \infty$.
\end{corollary}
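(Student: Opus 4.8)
The plan is to upgrade the weak convergence $\nabla m_k \rightharpoonup \nabla m$ in $L^2(Q_T;\R^d)$, supplied by Theorem~\ref{conv-main-thm}, to strong convergence by proving convergence of the norms $\norm{\nabla m_k}_{Q_T}\to\norm{\nabla m}_{Q_T}$; since $L^2(Q_T;\R^d)$ is a Hilbert space and $\norm{\cdot}_X$ coincides with the gradient $L^2$-norm, this together with the weak convergence yields strong convergence in $X$. To obtain the norms, I would test the discrete Kolmogorov--Fokker--Planck equation~\eqref{weakform2-space-time-discrete} with $\phi=m_k$. Using $A_k=\nu\mathbb{I}+\Dk$, the discrete energy identity $2\int_0^T(\p_t\Ip m_k,m_k)_{\Omk}\,\dd t=\norm{m_k(T)}_{\Omk}^2+\sum_{i=0}^{\Nk-1}\norm{\jump{m_k}_i}_{\Omk}^2-\norm{m_k(0)}_{\Omk}^2$ from the proof of Lemma~\ref{lem:vkp_infty_bound}, and discarding the nonnegative stabilization and jump contributions together with the first inequality in~\eqref{eq:L^2-mass_lumped_bound}, I arrive at
\begin{equation*}
\nu\norm{\nabla m_k}_{Q_T}^2+\tfrac12\norm{m_k(T)}_\Omega^2 \leq \int_0^T\langle G,m_k\rangle\,\dd t-\int_0^T(m_k\tilde b_k,\nabla m_k)_\Omega\,\dd t+\tfrac12\norm{m_k(0)}_{\Omk}^2.
\end{equation*}

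Next I would pass to the limit in each term on the right. The term $\int_0^T\langle G,m_k\rangle\,\dd t$ converges to $\int_0^T\langle G,m\rangle\,\dd t$ by the weak convergence $m_k\rightharpoonup m$ in $X$. For the initial term, $m_k(0)=R_km_0$, and the Riesz identity~\eqref{eq:Rk_riesz} gives $\norm{R_km_0}_{\Omk}^2=(R_km_0,m_0)_\Omega$, which converges to $\norm{m_0}_\Omega^2=\norm{m(0)}_\Omega^2$ because $R_km_0\to m_0$ strongly in $L^2(\Omega)$ (by $L^2$-stability of $R_k$, the bound~\eqref{quasi-interp-approx-property}, and density of $H_0^1(\Omega)$ in $L^2(\Omega)$). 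Since $m_k(T)\to m(T)$ strongly in $L^2(\Omega)$ by Theorem~\ref{conv-main-thm}, the term $\tfrac12\norm{m_k(T)}_\Omega^2$ converges to $\tfrac12\norm{m(T)}_\Omega^2$. The crucial term is the advective one, and this is precisely where the pre-compactness hypothesis enters: along a subsequence $\tilde b_k\to\tilde b_*$ strongly in $L^1(Q_T;\R^d)$, and since $\norm{\tilde b_k}_{L^\infty(Q_T;\R^d)}\le L_H$ uniformly by Lemma~\ref{Prop1-time}, interpolation upgrades this to strong convergence in every $L^q(Q_T;\R^d)$, $q<\infty$. Writing $m_k\tilde b_k-m\tilde b_*=(m_k-m)\tilde b_k+m(\tilde b_k-\tilde b_*)$ and using $m_k\to m$ strongly in $L^2(Q_T)$ (the case $p=2$ of Theorem~\ref{conv-main-thm}) for the first summand and dominated convergence for the second, I obtain $m_k\tilde b_k\to m\tilde b_*$ strongly in $L^2(Q_T;\R^d)$; paired with $\nabla m_k\rightharpoonup\nabla m$ this gives $\int_0^T(m_k\tilde b_k,\nabla m_k)_\Omega\,\dd t\to\int_0^T(m\tilde b_*,\nabla m)_\Omega\,\dd t$.

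The final step is to identify the limiting right-hand side with $\nu\norm{\nabla m}_{Q_T}^2$. The limit $\tilde b_*$ belongs to $\mathcal{D}_pH[u]$ by Lemma~\ref{closure}, and passing to the limit in~\eqref{weakform2-space-time-discrete} shows that $(u,m)$ solves the continuous equation~\eqref{weakform2-space-time} with this $\tilde b_*$; testing that equation with $\phi=m\in X$ (legitimate since $m\in Y\hookrightarrow C([0,T];L^2(\Omega))$) yields $\nu\norm{\nabla m}_{Q_T}^2=\int_0^T\langle G,m\rangle\,\dd t-\int_0^T(m\tilde b_*,\nabla m)_\Omega\,\dd t-\tfrac12\norm{m(T)}_\Omega^2+\tfrac12\norm{m(0)}_\Omega^2$, which is exactly the limit of the right-hand side above. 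Hence $\limsup_k\nu\norm{\nabla m_k}_{Q_T}^2\le\nu\norm{\nabla m}_{Q_T}^2$ along the subsequence, and by weak lower semicontinuity $\norm{\nabla m}_{Q_T}\le\liminf_k\norm{\nabla m_k}_{Q_T}$, so the norms converge and $\nabla m_k\to\nabla m$ strongly along the subsequence. Because $m$ is the unique weak solution, every subsequence admits such a strongly convergent further subsequence with the same limit, so the full sequence converges strongly in $X=L^2(0,T;H_0^1(\Omega))$. I expect the main obstacle to be the advective term: controlling the triple product $m_k\tilde b_k\cdot\nabla m_k$ in which both $\tilde b_k$ and $\nabla m_k$ a priori converge only weakly, and it is precisely the pre-compactness assumption that breaks this deadlock by rendering $m_k\tilde b_k$ strongly convergent. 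A secondary technical point is the careful matching of the limit $\tilde b_*$ of $\{\tilde b_k\}$ with the transport field appearing in the continuous energy identity, which relies on the uniqueness of $(u,m)$.
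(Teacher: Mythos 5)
Your proposal is correct and follows essentially the same route as the paper: test the discrete KFP equation with $\phi=m_k$, pass to the limit in the resulting energy inequality using the pre-compactness of $\{\tilde b_k\}$ to obtain strong $L^2(Q_T;\R^d)$ convergence of $m_k\tilde b_k$, match the limit against the continuous energy identity obtained by testing~\eqref{weakform2-space-time} with $m$, and combine the resulting $\limsup$ bound on $\norm{\nabla m_k}_{Q_T}$ with weak convergence to conclude. The only minor deviation is your treatment of the term $m(\tilde b_k-\tilde b_*)$ by dominated convergence using the uniform $L^\infty$ bound, where the paper instead invokes the parabolic embedding $m\in L^{2+4/d}(Q_T)$ and convergence of $\tilde b_k$ in $L^{d+2}(Q_T;\R^d)$; both are valid.
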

The proof of Corollary~\ref{convergence-cor} is in~Section~\ref{sec8:proofs_of_convergence}. 
Note that the pre-compactness of $\{\tilde{b}_{k}\}_{k\in\mathbb{N}}$ is assured, for instance, if the Hamiltonian $H$ given by \eqref{H-time-homogeneous} is such that partial derivative $\frac{\partial H}{\partial p}$ exists and is continuous in $Q_T\times\R^\dim$. 
Thus, in the case of continuously differentiable Hamiltonians, Corollary~\ref{convergence-cor} shows that $m_k\tends m$ in norm in $ L^2(0,T;H_0^1(\Omega))$ as $k\tends \infty$.

\section{Proof of Theorems~\ref{discrete-mfg-existence} and \ref{discrete-mfg-uniqueness}}\label{sec6}

In this section we give the proofs of the existence and uniqueness results for the discrete problem \eqref{weakform-space-time-discrete}.

\subsection{Well-posedness of discrete HJB and KFP equations}\label{sec:discFPHJB}

In Lemmas~\ref{KFP_wellposedness_discrete} and~\ref{HJB_wellposedness_discrete} below, we state well-posedness and stability results for the discretized KFP and HJB equations when considered separately. The proofs of these results are given in Appendix~\ref{sec:app:discrete_wellposedness}.
Recall that we make the assumption that $\tau_k\leq \tau_1$ for all $k\in\N$, with $\tau_1$ satisfying \eqref{time-step-size}, in order to simplify the analysis. 

\begin{lemma}[Well-posedness of discrete KFP equation]\label{KFP_wellposedness_discrete}
Let $k\in\N$, let $G\in L^2(0,T;H^{-1}(\Omega))$, let $g\in V_k$ and let $\tilde{b}\in L^\infty(\QT;\R^\dim)$ such that $\norm{\tilde{b}}_{L^\infty(\QT;\R^\dim)}\leq L_H$. Then, there exists a unique $\overline{m}\in \Vkp$ such that $\overline{m}(0)=g$ and such that
\begin{equation}\label{KFP_eqn_discrete}
\begin{aligned}
		\int_0^T\left( \partial_t\mathcal{I}_+\overline{m},\phi\right)_{\Omk}+(A_k\nabla \overline{m},\nabla\phi)_{\Omega}+(\overline{m},\tilde{b}{\cdot}\nabla \phi)_\Omega\mathrm{d}t=\int_0^T\langle G,\phi\rangle \mathrm{d}t &&& \forall \phi \in \Vk.
		\end{aligned}
	\end{equation}
We have the bound
\begin{equation}\label{eq:KFP_apriori_bound}
\norm{\overline{m}}_{\Vkp} \lesssim \norm{G}_{L^2(0,T;H^{-1}(\Omega))}+\norm{g}_{\Omega},
\end{equation}
where the hidden constant depends on $\tau_1$ but is otherwise independent of $k$.
Furthermore, if $g\geq 0 $ a.e.\ in $\Omega$ and if $G$ is nonnegative in the sense of distributions, then $\overline{m}\geq 0$ a.e.\ in $\QT$.
\end{lemma}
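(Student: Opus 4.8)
The plan is to exploit the piecewise-constant-in-time structure of $\Vk$ to reduce~\eqref{KFP_eqn_discrete} to a sequence of spatial problems, deducing well-posedness and nonnegativity from the discrete maximum principle of Theorem~\ref{DMP-edge-stabilisation-result}, and proving the stability bound~\eqref{eq:KFP_apriori_bound} by a discrete energy argument.

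\emph{Reduction and well-posedness.} Every $\phi\in\Vk$ is a combination of functions $\phi^n\chi_{I_n}$ with $\phi^n\in V_k$, so~\eqref{KFP_eqn_discrete} is equivalent to the family of per-step identities obtained by testing on each $I_n$. Writing $\overline m^{\,n}:=\overline m|_{I_n}$, $\overline m^{\,0}:=g$, and recalling $\partial_t\Ip\overline m|_{I_n}=\tau_k^{-1}(\overline m^{\,n}-\overline m^{\,n-1})$ from~\eqref{derivatives_Ip}, these read
\[
\tfrac{1}{\tau_k}(\overline m^{\,n}-\overline m^{\,n-1},v)_{\Omk}+(A_k\nabla\overline m^{\,n},\nabla v)_\Omega+(\overline m^{\,n},\bar b_n\cdot\nabla v)_\Omega=\tfrac{1}{\tau_k}\int_{I_n}\langle G,v\rangle\,\mathrm{d}t\quad\forall v\in V_k,
\]
where $\bar b_n:=\tau_k^{-1}\int_{I_n}\tilde b\,\mathrm{d}t$ still satisfies $\norm{\bar b_n}_{L^\infty(\Omega;\R^\dim)}\leq L_H$ since $\overline m^{\,n}$ is constant on $I_n$. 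The spatial operator on the left is $L_n^*+\tau_k^{-1}(\cdot,\cdot)_{\Omk}$, the adjoint (the mass operator being self-adjoint) of $L_n+\tau_k^{-1}(\cdot,\cdot)_{\Omk}$, with $L_n\in W(V_k,\Dk)$ built from $\bar b_n$, cf.~\eqref{L-operator}. By Theorem~\ref{DMP-edge-stabilisation-result} both $L_n$ and $L_n^*$ satisfy the DMP, and adding the diagonal term $\tau_k^{-1}(\cdot,\cdot)_{\Omk}$ preserves this property (as observed after the theorem); hence $L_n^*+\tau_k^{-1}(\cdot,\cdot)_{\Omk}$ satisfies the DMP and is therefore a bijection on $V_k$. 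Solving the steps sequentially from $\overline m^{\,0}=g$ then yields the unique $\overline m\in\Vkp$.

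\emph{Nonnegativity.} I would argue by induction, the base case being $\overline m^{\,0}=g\geq0$. Testing the right-hand side of the per-step identity against a nodal basis function $\xi_i$, the term $\tau_k^{-1}(\overline m^{\,n-1},\xi_i)_{\Omk}=\tau_k^{-1}(\xi_i,1)_\Omega\,\overline m^{\,n-1}(x_i)$ is nonnegative whenever $\overline m^{\,n-1}\geq0$, while $\int_{I_n}\langle G,\xi_i\rangle\,\mathrm{d}t\geq0$ follows from the nonnegativity of $G$ in the sense of distributions applied to the nonnegative test function $\xi_i\chi_{I_n}$. The DMP for $L_n^*+\tau_k^{-1}(\cdot,\cdot)_{\Omk}$ then gives $\overline m^{\,n}\geq0$, closing the induction.

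\emph{Stability bound.} This is the main technical step, and is where~\eqref{time-step-size} enters. Testing the per-step identity with $v=\overline m^{\,n}$, using $(\overline m^{\,n}-\overline m^{\,n-1},\overline m^{\,n})_{\Omk}\geq\tfrac12(\norm{\overline m^{\,n}}_{\Omk}^2-\norm{\overline m^{\,n-1}}_{\Omk}^2)$ and $A_k\geq\nu\mathbb{I}$, and bounding the advective and source terms by Young's inequality, the decisive point is that the advective contribution $\tfrac{L_H^2}{2\nu}\norm{\overline m^{\,n}}_\Omega^2\leq\tfrac{L_H^2}{2\nu}\norm{\overline m^{\,n}}_{\Omk}^2$ (using the first inequality of~\eqref{eq:L^2-mass_lumped_bound}) can be absorbed into $\tfrac{1}{2\tau_k}\norm{\overline m^{\,n}}_{\Omk}^2$ precisely because~\eqref{time-step-size} guarantees $\tau_k<\nu L_H^{-2}$. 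This yields a recursion $\norm{\overline m^{\,n}}_{\Omk}^2\leq\beta^{-1}\norm{\overline m^{\,n-1}}_{\Omk}^2+C\int_{I_n}\norm{G}_{H^{-1}(\Omega)}^2\,\mathrm{d}t$ with $\beta:=1-L_H^2\tau_k/\nu\in(0,1)$. I expect the delicate part to be showing that the hidden constant is independent of $k$: unrolling the recursion produces the factor $\beta^{-N_k}$, which I would control uniformly via $N_k\tau_k=T$ and the elementary estimate $-\ln(1-x)\leq x/(1-x)$, giving $\beta^{-N_k}\leq\exp\!\big(L_H^2T/(\nu\beta_1)\big)$ with $\beta_1:=1-L_H^2\tau_1/\nu$, a constant depending on $\tau_1$ but not on $k$. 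This bounds $\max_n\norm{\overline m^{\,n}}_\Omega^2$ by the data; summing the energy identity over $n$ then controls $\int_0^T\norm{\nabla\overline m}_\Omega^2\,\mathrm{d}t$, and finally $\int_0^T\norm{\partial_t\Ip\overline m}_{\dualk}^2\,\mathrm{d}t$ is estimated directly from the equation by bounding $(\partial_t\Ip\overline m,v)_{\Omk}$ against $\norm{\nabla v}_\Omega$ times the already-controlled quantities, through the definition~\eqref{eq:dualknorm} of $\norm{\cdot}_{\dualk}$. Collecting the three terms in~\eqref{eq:discrete_vkp_norm} gives~\eqref{eq:KFP_apriori_bound}.
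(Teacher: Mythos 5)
Your proof is correct, and for the stability bound it takes a genuinely different route from the paper. The reduction to per-step problems, the identification of the spatial operator as the adjoint $L_n^*+\tau_k^{-1}(\cdot,\cdot)_{\Omk}$ of an element of $W(V_k,\Dk)$, and the appeal to Theorem~\ref{DMP-edge-stabilisation-result} for existence, uniqueness and nonnegativity coincide with the paper's argument (which only sketches this part; you supply the induction for nonnegativity explicitly, and you are right to track the adjoint, since the advective derivative falls on the test function). For the bound~\eqref{eq:KFP_apriori_bound}, however, the paper does not run a bare energy recursion: it invokes the weighted inf-sup identity of Theorem~\ref{inf-sup-theorem} for the bilinear form $B_k$ in the norms $\norm{\cdot}_{\Xk}$ and $\norm{\cdot}_{\Yk}$ built from the weight $\tak|_{I_n}=(1+\nu^{-1}L_H^2\tau_k)^{-n}$, absorbs the advective term via Lemma~\ref{tech-result-discrete} using the contraction factor $\gamma_k\leq\gamma_1<1$, and then passes to $\norm{\cdot}_{\Vkp}$ by the norm equivalence of Corollary~\ref{cor:equiv_norms}. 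Your discrete Gr\"onwall argument with $\beta^{-N_k}\leq\exp\bigl(L_H^2T/(\nu\beta_1)\bigr)$ is the elementary counterpart of the same mechanism: your factor $\bigl(1-\nu^{-1}L_H^2\tau_k\bigr)^{-n}$ plays exactly the role of the paper's weight $\tak^{-1}$, and both hinge on~\eqref{time-step-size} in the same way. What the paper's route buys is reuse: the weighted inf-sup framework is needed anyway for the Banach fixed-point contraction in Lemma~\ref{HJB_wellposedness_discrete}, and it delivers the three components of the $\Vkp$-norm in one stroke; your route is self-contained and more transparent, at the cost of assembling the $L^\infty(L^2)$, gradient, and discrete dual-norm pieces of~\eqref{eq:discrete_vkp_norm} separately, which you do correctly.
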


\begin{lemma}[Well-posedness of discrete HJB equation]\label{HJB_wellposedness_discrete}
Let $k\in\N$, let $\widetilde{F}\in L^2(0,T;H^{-1}(\Omega))$ and let $\widetilde{S}\in L^2(\Omega)$.
Then, there exists a unique $\overline{u}\in \Vkm$ such that $\overline{u}(T)=R_k\widetilde{S}$ and such that
	\begin{multline}\label{HJB_eqn_discrete}
			\int_0^{T}\left( \partial_t\mathcal{I}_+\psi,\overline{u}\right)_{\Omk}+(A_k\nabla\overline{u},\nabla\psi)_\Omega +({H}[\nabla \overline{u}], \psi)_{\Omega}\mathrm{d}t
			\\=\int_0^T\langle \widetilde{F},\psi\rangle\mathrm{d}t +(R_k\widetilde{S},\psi(T))_{\Omk} \quad \forall \psi \in \mathbb{V}_{k,0}^+.
	\end{multline}
	Furthermore $\overline{u}$ depends continuously on the data: if $\overline{u}_1,\,\overline{u}_2\in \Vkm $ are respective solutions of \eqref{HJB_eqn_discrete} with data $(\widetilde{F}_1,\widetilde{S}_1)$ and $(\widetilde{F}_2,\widetilde{S}_2)$ respectively and if $\overline{u}_1(T) = R_k \widetilde{S}_1$ and $\overline{u}_2(T) = R_k \widetilde{S}_2$, then
	\begin{equation}\label{HJB_cont_dep_discrete}
		\norm{\overline{u}_1-\overline{u}_2}_{\Vkm}\lesssim \norm{\widetilde{F}_1-\widetilde{F}_2}_{L^2(0,T;H^{-1}(\Omega))}+\norm{\widetilde{S}_1-\widetilde{S}_2}_\Omega,
	\end{equation}
	where the hidden constant depends on $\tau_1$ but is otherwise independent of $k$.
\end{lemma}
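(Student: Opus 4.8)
The plan is to prove Lemma~\ref{HJB_wellposedness_discrete} by reducing the space-time problem to a backward time-stepping recursion and analyzing each step via monotone operator theory. First I would exploit the structure established in~\eqref{eq:strong_in_time_discrete_HJB}: by choosing test functions $\psi\in\mathbb{V}_{k,0}^+$ supported on a single interval $I_n$, equation~\eqref{HJB_eqn_discrete} decouples into a sequence of elliptic problems indexed backward in time. Writing $u^n\coloneqq \overline{u}|_{I_n}$, the relation $\p_t\In \overline{u}|_{I_n}=-\tau_k^{-1}\jump{\overline{u}}_n$ from~\eqref{derivatives_Ip} converts the time-derivative term into the finite difference $\tau_k^{-1}(u^n-u^{n+1})$ (with the terminal condition $u^{\Nk}$ related to $R_k\widetilde{S}$ via $\overline{u}(T)=R_k\widetilde{S}$). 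Thus each step requires solving, for given $u^{n+1}$, the stationary problem of finding $u^n\in V_k$ such that
\begin{equation*}
\tfrac{1}{\tau_k}(u^n-u^{n+1},\chi)_{\Omk}+(A_k\nabla u^n,\nabla\chi)_\Omega+(H[\nabla u^n],\chi)_\Omega=\langle\widetilde{F}^n,\chi\rangle\quad\forall\chi\in V_k,
\end{equation*}
where $\widetilde{F}^n$ is an appropriate average of $\widetilde{F}$ over $I_n$.

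The existence and uniqueness at each step would follow from the theory of monotone, coercive, continuous operators on the finite-dimensional space $V_k$. The key point is that the map $w\mapsto H[\nabla w]$ is monotone as a consequence of the convexity of $H$ in its gradient argument: for $w_1,w_2\in V_k$, the subdifferential characterization~\eqref{subdifferential-space-time} gives $(H[\nabla w_1]-H[\nabla w_2],w_1-w_2)_\Omega\geq 0$ after testing against a selection $\tilde b\in\DpH[w_2]$ and using $H(t,x,\nabla w_1)\geq H(t,x,\nabla w_2)+\tilde b\cdot\nabla(w_1-w_2)$, together with the symmetric inequality. Adding the strictly monotone mass term $\tau_k^{-1}(\cdot,\cdot)_{\Omk}$ and the coercive symmetric form $(A_k\nabla\cdot,\nabla\cdot)_\Omega$ (coercive since $A_k\geq\nu\mathbb{I}$) yields a strictly monotone, coercive, continuous operator from $V_k$ to $V_k^*$. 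Existence then follows from the Browder--Minty theorem or, given finite dimensionality, a Brouwer degree argument, while strict monotonicity gives uniqueness of $u^n$. Solving the recursion backward from $n=\Nk$ to $n=1$ assembles the unique $\overline{u}\in\Vkm$.

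For the continuous dependence bound~\eqref{HJB_cont_dep_discrete}, I would test the difference of the two equations against a suitable function and propagate stability through the $\Vkm$-norm defined in~\eqref{eq:discrete_vkm_norm}. Setting $e\coloneqq \overline{u}_1-\overline{u}_2$, the natural test function is $e$ itself for the energy (spatial-gradient and terminal) contributions, using coercivity of $A_k$ and the monotonicity of the Hamiltonian term to absorb the nonlinearity; the Lipschitz bound~\eqref{eq:Lipschitz_H} controls $\abs{H[\nabla\overline{u}_1]-H[\nabla\overline{u}_2]}\leq L_H\abs{\nabla e}$ where monotonicity is insufficient. The terminal term contributes $\norm{e(T)}_{\Omk}^2=\norm{R_k(\widetilde{S}_1-\widetilde{S}_2)}_{\Omk}^2$, bounded via the $L^2$-stability of $R_k$ and~\eqref{eq:L^2-mass_lumped_bound}. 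To recover the dual-norm control of $\p_t\In e$ appearing in~\eqref{eq:discrete_vkm_norm}, I would rearrange~\eqref{HJB_eqn_discrete} to express the time-derivative term and bound it in $\norm{\cdot}_{\dualk}$ using the already-controlled energy quantities and the data, then invoke Lemma~\ref{mass-lump-H10-dual-bound} to translate between $\norm{\cdot}_{\dualk}$ and $\norm{\cdot}_{H^{-1}(\Omega)}$.

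The main obstacle I anticipate is the handling of the nonlinear Hamiltonian term in the stability estimate, since monotonicity alone controls only a sign and does not yield a quantitative bound proportional to $\norm{\widetilde{F}_1-\widetilde{F}_2}$ and $\norm{\widetilde{S}_1-\widetilde{S}_2}$; the crucial step is to use the Lipschitz continuity~\eqref{eq:Lipschitz_H} to estimate $(H[\nabla\overline{u}_1]-H[\nabla\overline{u}_2],e)_\Omega$ by $L_H\norm{\nabla e}_\Omega\norm{e}_\Omega$ and then absorb this using Young's inequality against the coercive $\nu\norm{\nabla e}_\Omega^2$ term, a step which the assumption~\eqref{time-step-size} on the initial time-step $\tau_1<\nu L_H^{-2}$ is precisely designed to make work uniformly in $k$. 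A secondary technical point is correctly tracking the roles of the forward reconstruction $\mathcal{I}_+$ (appearing in~\eqref{HJB_eqn_discrete}) versus the backward reconstruction $\mathcal{I}_-$ (appearing in the $\Vkm$-norm) through the discrete integration-by-parts identity~\eqref{eq:discrete_ibp}.
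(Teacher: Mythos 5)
The backward time-stepping reduction and the continuous-dependence argument are sound in outline, but there is a genuine error in your existence step: the map $w\mapsto H[\nabla w]$ is \emph{not} monotone from $V_k$ to $V_k^*$, and convexity of $p\mapsto H(t,x,p)$ does not imply it. Adding the two subgradient inequalities you cite yields the pointwise statement $(\tilde b_1-\tilde b_2)\cdot\nabla(w_1-w_2)\geq 0$ for selections $\tilde b_i\in\partial_pH(\cdot,\cdot,\nabla w_i)$, i.e.\ monotonicity of the set-valued map $p\mapsto\partial_pH(t,x,p)$ on $\R^\dim$; it says nothing about the sign of $\int_\Omega\bigl(H[\nabla w_1]-H[\nabla w_2]\bigr)(w_1-w_2)\,\mathrm{d}x$, because the pointwise inequality $H(x,\nabla w_1)-H(x,\nabla w_2)\geq \tilde b_2\cdot\nabla(w_1-w_2)$ reverses wherever $w_1-w_2<0$. (Already for $H(p)=\abs{p}$ in one dimension one can arrange $\abs{\nabla w_1}>\abs{\nabla w_2}$ on a region where $w_1<w_2$ and make the integral negative.) So the Browder--Minty argument as you state it does not go through. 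The gap is repairable: the \emph{full} per-step operator $w\mapsto \tau_k^{-1}(w,\cdot)_{\Omk}+(A_k\nabla w,\nabla\cdot)_\Omega+(H[\nabla w],\cdot)_\Omega$ is strictly monotone and coercive, but only because the Lipschitz bound \eqref{eq:Lipschitz_H} together with Young's inequality gives $\lvert(H[\nabla w_1]-H[\nabla w_2],e)_\Omega\rvert\leq\tfrac{\nu}{2}\norm{\nabla e}_\Omega^2+\tfrac{L_H^2}{2\nu}\norm{e}_{\Omega}^2$, which is absorbed by the mass and diffusion terms precisely when $\tau_k<\nu L_H^{-2}$, i.e.\ under \eqref{time-step-size}. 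In other words, the same Lipschitz-plus-time-step mechanism that you correctly identify as essential for the stability estimate is also what makes the existence step work; convexity plays no role there.

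For comparison, the paper avoids the time-stepping decomposition altogether: it defines a global-in-time map $\Gamma_k\colon\Vk\to\Vk$ by freezing the nonlinearity ($H[\nabla w]$ moved to the right-hand side of a linear space-time problem), and proves that $\Gamma_k$ is a contraction in the weighted norm $\norm{\cdot}_{\Xk}$ using the inf-sup identity \eqref{XY-inf-sup} and Lemma~\ref{tech-result-discrete}, with contraction constant $\gamma_k=\sqrt{(1+\nu^{-1}L_H^2\tau_k)/2}<1$ guaranteed by \eqref{time-step-size}; existence, uniqueness and the bound \eqref{HJB_cont_dep_discrete} then all follow from the Banach fixed point theorem and the same inf-sup machinery. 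Your per-step formulation is more elementary and would also work if you replace the false monotonicity claim by either the Lipschitz-absorption argument above or a per-step contraction on $V_k$; the paper's weighted space-time norms buy a cleaner, uniform-in-$k$ treatment of the accumulation of the lower-order term across time steps, which you would otherwise have to track by a discrete Gr\"onwall argument.
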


{\subsection{Proof of Theorem~\ref{discrete-mfg-existence}}
{We now prove Theorem~\ref{discrete-mfg-existence} by an argument that is based on  Kakutani's fixed point theorem \cite[Ch.\ 9, Theorem 9.B]{ZMR0816732}. 
\begin{theorem}[Kakutani's fixed point theorem \cite{ZMR0816732}]\label{thm-kakutani}
    Suppose that 
    \begin{enumerate}
        \item $\mathcal{B}$ is a nonempty, compact, convex set in a locally convex space $X$;
        \item $\mathcal{V}:\mathcal{B}\rightrightarrows\mathcal{B}$ is a set-valued map such that $\mathcal{V}[\tilde{b}]$ is nonempty, closed and convex for all $\tilde{b}\in\mathcal{B}$; and
        \item $\mathcal{V}$ is upper semi-continuous.
    \end{enumerate}
    Then, $\mathcal{V}$ has a fixed point: there exists $\tilde{b}\in\mathcal{B}$ such that $\tilde{b}\in \mathcal{V}[\tilde{b}]$. 
\end{theorem}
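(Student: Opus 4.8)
The plan is to derive this theorem from Brouwer's fixed point theorem by a partition-of-unity approximation argument, carried out first in a finite-dimensional model case and then reduced to the general locally convex setting. The key preliminary observation I would use throughout is that, because $\mathcal{B}$ is compact and the images $\mathcal{V}[\tilde b]$ are closed, upper semicontinuity of $\mathcal{V}$ is \emph{equivalent} to $\mathcal{V}$ having a closed graph in $\mathcal{B}\times\mathcal{B}$. This closed-graph reformulation, combined with the convexity of the values $\mathcal{V}[\cdot]$, is exactly what makes the limit passages below valid, so I would record it at the outset.

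First I would treat the model case $X=\R^n$ with $\mathcal{B}$ compact and convex. For a tolerance $\eps>0$, cover $\mathcal{B}$ by finitely many open balls of radius $\eps$ with centres $x_1,\dots,x_N\in\mathcal{B}$, take a subordinate continuous partition of unity $\{\phi_i\}_{i=1}^N$, choose selections $y_i\in\mathcal{V}[x_i]$, and set $f_\eps(x):=\sum_{i=1}^N\phi_i(x)\,y_i$. Since $\mathcal{B}$ is convex and each $y_i\in\mathcal{B}$, the map $f_\eps$ is a continuous self-map of $\mathcal{B}$, so Brouwer's theorem gives a fixed point $x_\eps=f_\eps(x_\eps)$. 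As $\phi_i(x_\eps)\neq 0$ forces $\lvert x_\eps-x_i\rvert<\eps$, the point $x_\eps$ is a convex combination of selections taken at centres that are $\eps$-close to it. Sending $\eps\to 0$ along a sequence and extracting a convergent subsequence $x_{\eps_n}\to x^\ast$ by compactness, the closed graph of $\mathcal{V}$ and the convexity of $\mathcal{V}[x^\ast]$ force $x^\ast\in\mathcal{V}[x^\ast]$.

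Next I would reduce the locally convex case to a finite-dimensional one. The topology of $X$ is generated by a directed family of continuous seminorms, so for each continuous seminorm $p$ and each $\eps>0$ I would cover the compact set $\mathcal{B}$ by finitely many $p$-balls $B_p(x_i,\eps)$ with $x_i\in\mathcal{B}$, take a subordinate continuous partition of unity $\{\phi_i\}_{i=1}^N$, and select $y_i\in\mathcal{V}[x_i]$. On the finite-dimensional compact convex polytope $P:=\conv\{y_1,\dots,y_N\}\subset\mathcal{B}$, the map $z\mapsto\sum_{i=1}^N\phi_i(z)\,y_i$ is a continuous self-map, so Brouwer's theorem produces $z_{p,\eps}\in P$ with $z_{p,\eps}=\sum_i\phi_i(z_{p,\eps})\,y_i$. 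Since $\phi_i(z_{p,\eps})\neq 0$ forces $p(z_{p,\eps}-x_i)<\eps$, the point $z_{p,\eps}$ lies in the convex hull of selections $y_i\in\mathcal{V}[x_i]$ taken at centres $p$-close to it. Indexing these approximate solutions by the directed set of pairs $(p,\eps)$, ordered so that the seminorms grow and $\eps$ shrinks, and extracting a convergent subnet $z_{p,\eps}\to\tilde b$ by compactness of $\mathcal{B}$, I would conclude $\tilde b\in\mathcal{V}[\tilde b]$.

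The main obstacle I expect is this final limit passage. Along the subnet, every seminorm of $z_{p,\eps}-x_i$ is eventually small for the active indices, so the relevant centres converge to $\tilde b$; the inclusion $\tilde b\in\mathcal{V}[\tilde b]$ then has to be extracted from the fact that $z_{p,\eps}$ is a \emph{convex combination} of selections at those centres. The clean way to close this is to use upper semicontinuity in its neighbourhood form: given any open convex neighbourhood $U$ of the closed convex set $\mathcal{V}[\tilde b]$, upper semicontinuity yields $\mathcal{V}[x]\subset U$ for $x$ near $\tilde b$, hence the active $y_i$ lie in $U$, and convexity of $U$ gives $z_{p,\eps}\in U$. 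Letting $U$ shrink to $\mathcal{V}[\tilde b]$ and using $z_{p,\eps}\to\tilde b$ together with the closedness of $\mathcal{V}[\tilde b]$ then delivers the fixed point. The delicate points are therefore the organization of the net over both seminorms and tolerances, and the essential use of convexity of the values to keep convex combinations of selections inside convex neighbourhoods of $\mathcal{V}[\tilde b]$.
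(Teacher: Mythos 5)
The paper offers no internal proof to compare against: Theorem~\ref{thm-kakutani} is quoted as a known result from Zeidler \cite{ZMR0816732} and used as a black box in the existence argument of Section~6.2. Your proposal is therefore doing strictly more than the paper, and it is essentially the classical Fan--Glicksberg proof of the theorem: approximate the multimap by single-valued Cellina-type maps built from a partition of unity subordinate to a cover by seminorm balls, apply Brouwer's theorem on the finite-dimensional compact convex polytope $\mathrm{conv}\{y_1,\dots,y_N\}$, index the approximate fixed points by the directed set of (seminorm, tolerance) pairs, extract a convergent subnet by compactness of $\mathcal{B}$, and close with the neighbourhood form of upper semicontinuity combined with convexity of the values. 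This is a legitimate and standard route; what it buys is self-containedness (reduction of the infinite-dimensional fixed point theorem to Brouwer), at the cost of the net bookkeeping. Your opening observation that, for a compact-valued map into a compact set, upper semicontinuity is equivalent to having a closed graph is exactly the equivalence the paper itself invokes (via Aubin--Cellina) in the \emph{opposite} direction, when it verifies hypothesis~3 for its map $\mathcal{K}_k$ by proving closedness of the graph $\mathcal{W}_k$.

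Three small points deserve tightening, though none is a genuine gap. First, in the $\mathbb{R}^n$ model case the sentence ``the closed graph of $\mathcal{V}$ and the convexity of $\mathcal{V}[x^\ast]$ force $x^\ast\in\mathcal{V}[x^\ast]$'' is too quick as stated: $x_\varepsilon$ is a convex combination of points lying in \emph{different} image sets $\mathcal{V}[x_i]$, so the closed graph does not apply directly; you need either the Carath\'eodory argument (at most $n+1$ active terms, pass to limits in weights and selections, apply the closed graph to each selection, then use convexity of $\mathcal{V}[x^\ast]$) or the neighbourhood argument you spell out later for the locally convex case, which subsumes the model case anyway. Second, the final shrinking step needs the separation fact that a compact convex set in a Hausdorff locally convex space equals the intersection of the closures of its open convex neighbourhoods (e.g.\ via $\overline{C}=\bigcap_{V}(C+V)$ over convex neighbourhoods $V$ of the origin, or Hahn--Banach separation); this should be stated rather than left implicit in ``letting $U$ shrink.'' Third, the whole argument requires the locally convex space to be Hausdorff (as in Zeidler's formulation), both for subnet limits and for separation; this is harmless here, since the paper applies the theorem to a norm-bounded ball of $L^{\infty}(Q_T;\mathbb{R}^d)$ with its weak-$*$ topology, which is Hausdorff and, as the paper notes, metrisable and compact.
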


To begin, recall that $L_H$ is the Lipschitz constant of the Hamiltonian, c.f.~\eqref{eq:Lipschitz_H}.
We equip the space $L^{\infty}(Q_T;\mathbb{R}^d)$ with its weak-$*$ topology, noting that it is then a locally convex topological vector space. Let $\mathcal{B}$ denote the ball 
    \begin{equation}
        \mathcal{B}\coloneqq \left\{\tilde{b}\in L^{\infty}(Q_T;\mathbb{R}^d): \|\tilde{b}\|_{L^{\infty}(Q_T;\mathbb{R}^d)}\leq L_H\right\},
    \end{equation}
    and note that $\mathcal{B}$ is nonempty, convex, and closed in the weak-$*$ topology. We note further that the weak-$*$ topology on $\mathcal{B}$ is metrisable since $L^1(Q_T;\mathbb{R}^d)$ is separable \cite[Ch.\ 15]{royden2018real}, and that $\mathcal{B}$ is compact by Helly's theorem.

    Let $\mathcal{M}_k: \mathcal{B}\to \mathbb{V}_k^+$ be the map defined as follows: for each $\tilde{b}\in \mathcal{B}$, let $\mathcal{M}_k[\tilde{b}]$ in $\mathbb{V}_k^+$ be the unique solution of
    \begin{multline}\label{M-map-def-time-k}
\int_0^T(\partial_t\mathcal{I}_+\mathcal{M}_k[\tilde{b}],\phi)_{\Omk}+(A_k\nabla \mathcal{M}_k[\tilde{b}],\nabla\phi)_{\Omega}+(\mathcal{M}_k[\tilde{b}],\tilde{b}{\cdot}\nabla \phi)_{\Omega} \mathrm{d}t
\\ =\int_0^T\langle {G},\phi\rangle\mathrm{d}t \quad \forall \phi \in \mathbb{V}_k
\end{multline}
    with $\mathcal{M}_k[\tilde{b}](0)=R_km_0$ in $V_k$. The map $\mathcal{M}_k$ is well-defined thanks to Lemma~\ref{KFP_wellposedness_discrete}. Next, let $\mathcal{U}_k:\mathbb{V}_k^+\to \mathbb{V}_k^-$ be the map defined as follows: for each ${m}_k\in \mathbb{V}_k^+$, let $\mathcal{U}_k[{m}_k]\in \mathbb{V}_k^-$ denote the unique solution of
    \begin{multline}\label{U-map-def-time-k}
\int_0^T(\partial_t\mathcal{I}_+\psi,\mathcal{U}_k[m_k])_{\Omk}+(A_k \nabla \mathcal{U}_k[m_k],\nabla\psi)_{\Omega}+({{H}[\nabla \mathcal{U}_k[m_k]]},\psi)_{\Omega} \mathrm{d}t
\\ =\int_0^T\langle {F}[m_{k}],\psi\rangle\mathrm{d}t 	+(R_kS[m_{k}(T)],\psi(T))_{\Omk} \quad \forall \psi \in \mathbb{V}_{k,0}^+,
    \end{multline}
    with $\mathcal{U}_k[m_k](T)=R_kS[m_{k}(T)]$ in $V_k$.
    The map $\mathcal{U}_k$ is well-defined by Lemma~\ref{HJB_wellposedness_discrete}. 

    Now, we define the set-valued map $\mathcal{K}_k:\mathcal{B}\rightrightarrows L^{\infty}(Q_T;\mathbb{R}^d)$ as follows: for each $\tilde{b}\in \mathcal{B}$, let 
    \begin{equation}
        \mathcal{K}_k[\tilde{b}]\coloneqq  \mathcal{D}_pH\big[\mathcal{U}_k\big[\mathcal{M}_k\big[\tilde{b}\big]\big]\big].
    \end{equation}
    Lemma \ref{Prop1-time} implies that $\mathcal{K}_k[\tilde{b}]\subset\mathcal{B}$ for each $\tilde{b}\in\mathcal{B}$, so $\mathcal{K}_k:\mathcal{B}\rightrightarrows \mathcal{B}$. Moreover, for every $\tilde{b}\in \mathcal{B}$, the set $\mathcal{K}_k[\tilde{b}]$ is nonempty and convex. Indeed, for each $\tilde{b}\in\mathcal{B}$ the set $\mathcal{K}_k[\tilde{b}]$ is nonempty by Lemma \ref{Prop1-time}. Also, $\mathcal{K}_k[\tilde{b}]$ is convex since $\partial_pH$ has convex images. Furthermore, $\mathcal{K}_k[\tilde{b}]$ is closed for all $\tilde{b}\in\mathcal{B}$ thanks to Lemma \ref{closure} and the fact that $\mathcal{B}\subset L^2(Q_T;\mathbb{R}^d)$ where $|Q_T|_{d+1}<\infty$. 
    
    The existence of a solution to the discrete problem \eqref{weakform-space-time-discrete} is equivalent to showing the existence of a fixed point of $\mathcal{K}_k$, i.e.\ that there exists a $\tilde{b}_k\in \mathcal{B}$ such that $\tilde{b}_k\in\mathcal{K}_k[\tilde{b}_k].$ Indeed, if $\tilde{b}_k\in \mathcal{B}$ satisfies $\tilde{b}_k\in\mathcal{K}_k[\tilde{b}_k]$ then a solution pair $(u_k,m_k)$ of the discrete problem \eqref{weakform-space-time-discrete} is given by $m_k\coloneqq \mathcal{M}_k[\tilde{b}_k]$ and $u_k\coloneqq \mathcal{U}_k[m_k]$ with $\tilde{b}_k\in \mathcal{D}_pH[u_k]$, while the converse is obvious. 
    
    We now verify that $\mathcal{K}_k$ is upper semi-continuous. To this end, it suffices to prove that the graph of $\mathcal{K}_k$ is closed; c.f.\ \cite[Ch.\ 1, Corollary 1, p.\ 42]{MR0755330}.  Let $\mathcal{W}_k$ denote the graph of $\mathcal{K}_k$, which is defined by 
    \begin{equation}\label{graph-def-time-k}
        \mathcal{W}_k\coloneqq \left\{(\tilde{b},\overline{b})\in \mathcal{B}\times\mathcal{B}:\overline{b}\in\mathcal{K}_k[\tilde{b}]\right\}.
    \end{equation}
    Since $\mathcal{B}$ is metrisable, to show that the graph $\mathcal{W}_k$ is a closed it is enough to show that whenever a sequence $\{(\tilde{b}_i,\overline{b}_i)\}_{i\in\mathbb{N}}\subset \mathcal{W}_k$ converges weakly-$*$ in $\mathcal{B}\times\mathcal{B}$ to a point $(\tilde{b},\overline{b})$ as $i\to\infty$, then $(\tilde{b},\overline{b})\in\mathcal{W}_k$,  which is equivalent to $\overline{b}\in \mathcal{K}_k[\tilde{b}]$. Let us then suppose that we are given a sequence $\{(\tilde{b}_i,\overline{b}_i)\}_{i\in\mathbb{N}}\subset \mathcal{W}_k$ that converges weakly-$*$ in $\mathcal{B}\times\mathcal{B}$ to a point $(\tilde{b},\overline{b})$ as $i\to\infty$. To begin, we claim that $\mathcal{M}_k[\tilde{b}_i]\to \mathcal{M}_k[\tilde{b}]$ in $L^2(0,T;L^2(\Omega))$ and $\mathcal{M}_k[\tilde{b}_i](T)\to \mathcal{M}_k[\tilde{b}](T)$ in $L^2(\Omega)$ as $i\to\infty$. Indeed, since $\{\tilde{b}_i\}_{i\in\mathbb{N}}\subset\mathcal{B}$, for each $i\in\mathbb{N}$ we apply Lemma \ref{KFP_wellposedness_discrete} and the norm equivalence \eqref{eq:L^2-mass_lumped_bound} to obtain the uniform bound
    \begin{equation}\label{M_n-time-k}
        \sup_{i\in\mathbb{N}}\|\mathcal{M}_k[\tilde{b}_i]\|_{\mathbb{V}_k^+}\lesssim \|m_0\|_{\Omega}+\|G\|_{L^2(0,T;H^{-1}(\Omega))}.
    \end{equation}
    Since $\mathbb{V}_k^+$ is finite dimensional, this uniform bound implies that any given subsequence $\{\mathcal{M}_k[\tilde{b}_{i_j}]\}_{j\in\mathbb{N}}$ contains a further subsequence $\{\mathcal{M}_k[\tilde{b}_{i_{j_s}}]\}_{s\in\mathbb{N}}$ such that $$\mathcal{M}_k[\tilde{b}_{i_{j_s}}]\to {m}_k\quad\text{ in }\mathbb{V}_k^+$$ as $s\to\infty$, for some ${m}_k\in\mathbb{V}_k^+$. In particular, it follows from the $L^{\infty}(L^2)$-bound in Lemma~\ref{lem:vkp_infty_bound} that $m_k(T)=\lim_{s\tends \infty} \mathcal{M}_k[\tilde{b}_{i_{j_s}}](T)$ in $L^2(\Omega)$ and $m_k(0)=R_k m_0$ in $V_k$. We then pass to the limit in the equation \eqref{M-map-def-time-k} satisfied by $\mathcal{M}_k[\tilde{b}_{i_{j_s}}]$ to find that $m_k\in \mathbb{V}_k^+$ satisfies 
    \begin{multline}\label{m-alt-eqn-time-k}
\int_0^T\left(\partial_t\mathcal{I}_+m_k,\phi\right)_{\Omk}+(A_k\nabla m_k,\nabla\phi)_{\Omega}+(m_k,\tilde{b}{\cdot}\nabla \phi)_{\Omega} \mathrm{d}t
 =\int_0^T\langle {G},\phi\rangle\mathrm{d}t \quad \forall \phi \in \mathbb{V}_k
\end{multline} with $m_k(0)=R_k m_0$ in $V_k$.
     But we then see from the definition of $\mathcal{M}_k[\tilde{b}]$ in \eqref{M-map-def-time-k} that $m_k=\mathcal{M}_k[\tilde{b}]$ in $\mathbb{V}_k^+$. It follows that the entire sequence $\{\mathcal{M}_k[\tilde{b}_i]\}_{i\in\mathbb{N}}$ satisfies $\mathcal{M}_k[\tilde{b}_i]\to \mathcal{M}_k[\tilde{b}]$ in $\mathbb{V}_k^+$ and $\mathcal{M}_k[\tilde{b}_i](T)\to \mathcal{M}_k[\tilde{b}](T)$ in $L^2(\Omega)$ as $i\to\infty$. In particular, the strong convergence of  $\{\mathcal{M}_k[\tilde{b}_i]\}_{i\in\mathbb{N}}$ to $\mathcal{M}_k[\tilde{b}]$ in $\mathbb{V}_k^+$, together with the $L^{\infty}(L^2)$-bound in Lemma~\ref{lem:vkp_infty_bound}, implies that $\mathcal{M}_k[\tilde{b}_i]\to \mathcal{M}_k[\tilde{b}]$ in $L^2(0,T;L^2(\Omega))$ as $i\to\infty$. We deduce from continuity of $S:L^2(\Omega)\to L^2(\Omega)$ that $S[\mathcal{M}_k[\tilde{b}_i](T)]\to S[\mathcal{M}_k[\tilde{b}](T)]$ in $L^2(\Omega)$ as $i\to\infty$, while the continuity of $F:L^2(0,T;L^2(\Omega))\to L^2(0,T;H^{-1}(\Omega))$ implies that $F[\mathcal{M}_k[\tilde{b}_i]]\to F[\mathcal{M}_k[\tilde{b}]]$ in $L^2(0,T;H^{-1}(\Omega))$ as $i\to\infty$. We then apply the continuous dependence result of Lemma \ref{HJB_wellposedness_discrete} to conclude that $\mathcal{U}_k[\mathcal{M}_k[\tilde{b}_i]]\to \mathcal{U}_k[\mathcal{M}_k[\tilde{b}]]$ in $\mathbb{V}_k^-$ as $i\to\infty$ with $\mathcal{U}_k[\mathcal{M}_k[\tilde{b}]](T)=R_kS[\mathcal{M}_k[\tilde{b}](T)]$ in $V_k$. By hypothesis, $\overline{b}_i\in \mathcal{K}_k[\tilde{b}_i]=\mathcal{D}_pH[\mathcal{U}_k[\mathcal{M}_k[\tilde{b}_i]]]$ for $i\in\mathbb{N}$ and $\overline{b}_i\rightharpoonup^* \overline{b}$ in $L^{\infty}(Q_T;\mathbb{R}^d)$ as $i\to\infty$. In particular, $\overline{b}_i\rightharpoonup \overline{b}$ in $L^{2}(Q_T;\mathbb{R}^d)$ as $i\to\infty$ since $\{\overline{b}_i\}_{i\in\mathbb{N}}\subset\mathcal{B}\subset L^2(Q_T;\mathbb{R}^d)$ and $|Q_T|_{d+1}<\infty$. We conclude from Lemma \ref{closure} that $\overline{b}\in \mathcal{D}_pH[\mathcal{U}_k[\mathcal{M}_k[\tilde{b}]]]$, i.e.\ $\overline{b}\in \mathcal{K}_k[\tilde{b}]$. We have therefore shown that $\mathcal{W}_k$ is closed, so $\mathcal{K}_k$ is upper semi-continuous.
    
    We have thus shown that the map $\mathcal{K}_k:\mathcal{B}\rightrightarrows\mathcal{B}$ satisfies the conditions of Kakutani's fixed point theorem, so $\mathcal{K}_k$ admits a fixed point and therefore there exists a solution to the discrete problem \eqref{weakform-space-time-discrete}, as required.

    Finally, to deduce the stability bound \eqref{eq:discrete_mfg_existence} satisfied by solutions $(u_k,m_k)$ to the FEM \eqref{weakform-space-time-discrete}, we first observe that Lemma \ref{KFP_wellposedness_discrete} and the discrete KFP equation satisfied by $m_k$ with drift in $\mathcal{B}$ imply that $$\sup_{k\in\mathbb{N}}\|m_k\|_{\mathbb{V}_k^+}\lesssim \|m_0\|_{\Omega}+\|G\|_{L^2(0,T;H^{-1}(\Omega))}.$$ This bound, the linear growth of $F$ and $S$, together with the $L^{\infty}(L^2)$-bound in Lemma \ref{lem:vkp_infty_bound}, allow us to apply \eqref{HJB_cont_dep_discrete} to obtain 
    $$\sup_{k\in\mathbb{N}}\|u_k\|_{\mathbb{V}_k^-}\lesssim \|m_0\|_{\Omega}+\|G\|_{L^2(0,T;H^{-1}(\Omega))}+1.$$
    We then deduce the stability bound \eqref{eq:discrete_mfg_existence}, as desired.
	\hfill\proofbox
	}}

\subsection{Proof of Theorem~\ref{discrete-mfg-uniqueness}}
Suppose that $(u_{k,i},m_{k,i})\in \Vk\times\timeVkforward$, $i\in\{1,2\}$ are solutions of~\eqref{weakform-space-time-discrete}, {for some respective $\tilde{b}_{k,i}\in \DpH[u_{k,i}]$.}
{It follows from the nonnegativity of $m_0$ and of $G$ and from the discrete maximum principle (see Theorem~\ref{DMP-edge-stabilisation-result}), that $m_{k,i}\geq 0$ a.e.\ in $\QT$ for each $i\in\{1,2\}$.
Observe also that $m_{k,1}(0)=m_{k,2}(0)$ so that $m_{k,1}-m_{k,2} \in \mathbb{V}_{k,0}^+$. 
Note that $(R_kS[m_{k,i}(T)],m_{k,j}(T))_{\Omk}=(S[m_{k,i}(T)],m_k^{(j)}(T))_\Omega$ for all $i,\, j\in\{1,2\}$ by~\eqref{eq:Rk_riesz}.
Then, proceeding similarly to the proof of Theorem~\ref{uniqueness-space-time}, we test~\eqref{weakform1-space-time-discrete} with $\psi=m_k^{(1)}-m_{k}^{(2)} \in \mathbb{V}_{k,0}^+$ and \eqref{weakform2-space-time-discrete} with $\phi=u_{k,1}-u_{k,2}$ to find that}
\begin{multline}\label{discrete-difference} 
		\int_0^{T}\int_{\Omega}m_1\lambda_{k,1,2}+m_2\lambda_{k,2,1}\mathrm{d}x\mathrm{d}t =  \int_0^T\langle {F}[m_{k,1}]-{F}[m_{k,2}], m_{k,1}-m_{k,2}\rangle \mathrm{d}t
		\\ +({S}[m_{k,1}(T)]-{S}[m_{k,2}(T)], m_{k,1}(T)-m_{k,2}(T))_{\Omega},
	%\end{split} 
\end{multline}
where
\begin{equation}
\lambda_{k,i,j}\coloneqq -H[\nabla {u}_{k,j}]+H[\nabla u_{k,i}]+ \tilde{b}_{k,i} \cdot \nabla( u_{k,j}-u_{k,i}), \quad i,\,j\in\{1,2\}.
\end{equation}
{Note that $\lambda_{k,i,j}\leq 0 $ a.e.\ in $\QT$ for all $i,\,j \in \{1,2\}$ since $\tilde{b}_{k,i}\in \DpH[u_{k,i}]$.
Therefore $\int_0^{T}\int_{\Omega}m_1\lambda_{k,1,2}+m_2\lambda_{k,2,1}\mathrm{d}x\mathrm{d}t \leq 0$, and so we conclude from strict monotonicity of $F$ and monotonicity of $S$ applied to~\eqref{discrete-difference} that $m_{k,1}=m_{k,2}$ a.e.\ in $\QT$.
Since functions in~$\Vkp$ are left-continuous, it follows that $m_{k,1}(T)=m_{k,2}(T)$.
Therefore $u_{k,1}$ and $u_{k,2}$ both satisfy~\eqref{weakform1-space-time-discrete} with common right hand side, so Lemma~\ref{HJB_wellposedness_discrete} implies that $u_{k,1}=u_{k,2}$.	\hfill\proofbox
}

%%%%%%%%%%%%%%%%%%%%%%%%%%%%%%%%%%%%%%%%%%%%%%%%%%%%%%%%%%%%%%%%%%%%%%%%%%%%%%%%%%%%%%%%
%%%%%%%%%%%%%%%%%%%%%%%%%%%%%%%%%%%%%%%%%%%%%%%%%%%%%%%%%%%%%%%%%%%%%%%%%%%%%%%%%%%%%%%%
%%%%%%%%%%%%%%%%%%%%%%%%%%%%%%%%%%%%%%%%%%%%%%%%%%%%%%%%%%%%%%%%%%%%%%%%%%%%%%%%%%%%%%%%
%%%%%%%%%%%%%%%%%%%%%%%%%%%%%%%%%%%%%%%%%%%%%%%%%%%%%%%%%%%%%%%%%%%%%%%%%%%%%%%%%%%%%%%%
%%%%%%%%%%%%%%%%%%%%%%%%%%%%%%%%%%%%%%%%%%%%%%%%%%%%%%%%%%%%%%%%%%%%%%%%%%%%%%%%%%%%%%%%
%%%%%%%%%%%%%%%%%%%%%%%%%%%%%%%%%%%%%%%%%%%%%%%%%%%%%%%%%%%%%%%%%%%%%%%%%%%%%%%%%%%%%%%%
% DISCRETE FUNCTIONAL ANALYSIS
%%%%%%%%%%%%%%%%%%%%%%%%%%%%%%%%%%%%%%%%%%%%%%%%%%%%%%%%%%%%%%%%%%%%%%%%%%%%%%%%%%%%%%%%
%%%%%%%%%%%%%%%%%%%%%%%%%%%%%%%%%%%%%%%%%%%%%%%%%%%%%%%%%%%%%%%%%%%%%%%%%%%%%%%%%%%%%%%%
%%%%%%%%%%%%%%%%%%%%%%%%%%%%%%%%%%%%%%%%%%%%%%%%%%%%%%%%%%%%%%%%%%%%%%%%%%%%%%%%%%%%%%%%
%%%%%%%%%%%%%%%%%%%%%%%%%%%%%%%%%%%%%%%%%%%%%%%%%%%%%%%%%%%%%%%%%%%%%%%%%%%%%%%%%%%%%%%%
%%%%%%%%%%%%%%%%%%%%%%%%%%%%%%%%%%%%%%%%%%%%%%%%%%%%%%%%%%%%%%%%%%%%%%%%%%%%%%%%%%%%%%%%
%%%%%%%%%%%%%%%%%%%%%%%%%%%%%%%%%%%%%%%%%%%%%%%%%%%%%%%%%%%%%%%%%%%%%%%%%%%%%%%%%%%%%%%%

{
\section{Discrete functional analysis}\label{sec7}
In this section we establish auxiliary compactness results that will be used in the convergence analysis of the finite element scheme \eqref{weakform-space-time-discrete}. We begin by collecting some fundamental weak convergence results for the spaces $\mathbb{V}_k^{\pm}$. 
\begin{theorem}[Weak convergence properties]\label{thm:weak_convergence}
Let $\{v_k\}_{k\in\N}$ be a sequence such that $v_k\in\mathbb{V}_k^+$ for each $k\in\N$ and $\sup_{k\in\N}\norm{v_k}_{\mathbb{V}_k^+}<\infty$. Then, there exists a $v\in Y$ and a subsequence $\{v_{k_j}\}_{j\in\N}$ such that 
\begin{equation}\label{eq:weak_convergence}
\begin{aligned}
&v_{k_j} \rightharpoonup v \text{ in } X, && \p_t \mathcal{I}_+ v_{k_j}  \rightharpoonup \p_t v \text{ in } L^2(0,T;H^{-1}(\Omega)),\\
& v_{k_j} (T) \rightharpoonup v(T) \text{ in } \LOm, && v_{k_j} (0)\rightharpoonup v(0) \text{ in } \LOm,
\end{aligned}
\end{equation}
as $j\tends \infty$.
Furthermore, for any fixed $w_\ell \in \mathbb{V}_\ell$, $\ell\in\N$, there holds 
	\begin{equation}\label{eq:weak_convergence_mass_lumped}
		\lim_{j\to\infty}\int_0^T\left(\p_t \mathcal{I}_+ v_{k_j},w_\ell\right)_{\Omkj}\mathrm{d}t = \int_0^T\langle\partial_t v,w_{\ell} \rangle \mathrm{d}t.
	\end{equation} 
{The properties \eqref{eq:weak_convergence} and \eqref{eq:weak_convergence_mass_lumped} hold analogously for sequences $\{\tilde{v}_k\}_{k\in\mathbb{N}}$ such that $\tilde{v}_k\in \mathbb{V}_k^-$ for each $k\in\mathbb{N}$ and  $\sup_{k\in\N}\norm{\tilde{v}_k}_{\mathbb{V}_k^-}<\infty$, with $\mathcal{I}_-$ in place of $\mathcal{I}_+$.}
\end{theorem}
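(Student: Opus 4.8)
The plan is to start from the uniform bound $\sup_k \norm{v_k}_{\Vkp} < \infty$, extract weakly convergent subsequences, and then identify the limits. The definition~\eqref{eq:discrete_vkp_norm} controls $\int_0^T \norm{\nabla v_k}_\Omega^2 \dd t$, so $\{v_k\}$ is bounded in $X$; it controls $\int_0^T \norm{\p_t \Ip v_k}_{\dualk}^2 \dd t$, which by Lemma~\ref{mass-lump-H10-dual-bound} bounds $\{\p_t \Ip v_k\}$ in $L^2(0,T;H^{-1}(\Omega))$; and it controls $\norm{v_k(0)}_{\Omk}^2$, which by~\eqref{eq:L^2-mass_lumped_bound} bounds $\{v_k(0)\}$ in $\LOm$. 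Lemma~\ref{lem:vkp_infty_bound} additionally bounds $\{v_k(T)\}$ in $\LOm$. As all four spaces are Hilbert spaces, successive extraction yields one subsequence $\{v_{k_j}\}$ and limits $v\in X$, $\chi\in L^2(0,T;H^{-1}(\Omega))$, $\zeta_0,\zeta_T\in\LOm$ with $v_{k_j}\rightharpoonup v$ in $X$, $\p_t \Ip v_{k_j}\rightharpoonup \chi$ in $L^2(0,T;H^{-1}(\Omega))$, and $v_{k_j}(0)\rightharpoonup \zeta_0$, $v_{k_j}(T)\rightharpoonup \zeta_T$ in $\LOm$.

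The heart of the argument is to identify $\chi=\p_t v$. The key observation is that, from~\eqref{derivatives_Ip}, on each interval $I_n$ one has $\Ip v_k - v_k = -(t_n-t)\,\p_t \Ip v_k$, whence $\norm{\Ip v_k - v_k}_{L^2(0,T;H^{-1}(\Omega))} \lesssim \tau_k\norm{\p_t \Ip v_k}_{L^2(0,T;H^{-1}(\Omega))} \lesssim \tau_k\norm{v_k}_{\Vkp}\to 0$, using Lemma~\ref{mass-lump-H10-dual-bound} and $\tau_k\to 0$. Since $v_{k_j}\rightharpoonup v$ in $X$ implies $v_{k_j}\rightharpoonup v$ in $L^2(0,T;H^{-1}(\Omega))$, adding this strongly vanishing correction gives $\Ip v_{k_j}\rightharpoonup v$ in $L^2(0,T;H^{-1}(\Omega))$. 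Then, for any $\varphi\in C_c^\infty(0,T)$ and $\phi\in H_0^1(\Omega)$, I would integrate by parts in time (legitimate since $\Ip v_{k_j}$ is continuous and piecewise affine in time) to write $\int_0^T \langle\p_t \Ip v_{k_j},\phi\rangle\varphi\,\dd t = -\int_0^T (\Ip v_{k_j},\phi)_\Omega \varphi'\,\dd t$ and pass to the limit on both sides, obtaining $\int_0^T \langle\chi,\phi\rangle\varphi\,\dd t = -\int_0^T (v,\phi)_\Omega\varphi'\,\dd t$. This is exactly the statement that $\chi$ is the distributional time derivative of $v$, so $\p_t v = \chi\in L^2(0,T;H^{-1}(\Omega))$ and hence $v\in Y$.

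To identify the endpoint limits, I would repeat the integration by parts with $\varphi\in C^1([0,T])$, using $\Ip v_{k_j}(0)=v_{k_j}(0)$ and $\Ip v_{k_j}(T)=v_{k_j}(T)$ to produce the boundary contributions $(v_{k_j}(T),\phi)_\Omega\varphi(T) - (v_{k_j}(0),\phi)_\Omega\varphi(0)$. Passing to the limit and comparing with the continuous integration-by-parts formula valid for $v\in Y\hookrightarrow C([0,T];\LOm)$ gives $(\zeta_T,\phi)_\Omega\varphi(T) - (\zeta_0,\phi)_\Omega\varphi(0) = (v(T),\phi)_\Omega\varphi(T) - (v(0),\phi)_\Omega\varphi(0)$ for all such $\varphi$ and $\phi$; choosing $\varphi$ to isolate each endpoint and invoking density of $H_0^1(\Omega)$ in $\LOm$ yields $\zeta_0=v(0)$ and $\zeta_T=v(T)$, which completes~\eqref{eq:weak_convergence}.

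For the mass-lumped statement~\eqref{eq:weak_convergence_mass_lumped}, the difficulty, and the main obstacle of the proof, is that the inner product $(\cdot,\cdot)_{\Omkj}$ depends on $j$, so the weak convergence of $\p_t \Ip v_{k_j}$ against the fixed test function $w_\ell$ cannot be invoked directly. I would split, using the Riesz identity~\eqref{eq:Rk_riesz}, $(\p_t \Ip v_{k_j}, w_\ell)_{\Omkj} = (\p_t \Ip v_{k_j}, w_\ell)_\Omega + (\p_t \Ip v_{k_j}, w_\ell - R_{k_j}w_\ell)_{\Omkj}$, where $w_\ell\in V_\ell\subset V_{k_j}$ for $k_j\geq \ell$ by nestedness, so that $w_\ell - R_{k_j}w_\ell\in V_{k_j}$. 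The first term converges to $\int_0^T\langle\p_t v,w_\ell\rangle\,\dd t$ by the weak convergence $\p_t \Ip v_{k_j}\rightharpoonup \p_t v$ tested against $w_\ell\in X$. For the consistency error, the definition~\eqref{eq:dualknorm} gives $\abs{(\p_t \Ip v_{k_j}, w_\ell - R_{k_j}w_\ell)_{\Omkj}} \leq \norm{\p_t \Ip v_{k_j}}_{\dualk}\norm{\nabla(w_\ell - R_{k_j}w_\ell)}_\Omega$; after Cauchy--Schwarz in time, the first factor is bounded in $L^2(0,T)$ by $\norm{v_{k_j}}_{\Vkp}$, while the second factor tends to zero since $R_{k_j}w_\ell\to w_\ell$ in $H_0^1(\Omega)$, a standard consequence of~\eqref{quasi-interp-approx-property} together with the density of smooth functions, applied to the finitely many values taken by the piecewise-constant-in-time function $w_\ell$. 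Finally, the $\Vkm$ case follows verbatim after reversing the time variable, with $\In$ in place of $\Ip$, exactly as in the proof of Lemma~\ref{lem:vkp_infty_bound}.
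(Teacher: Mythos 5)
Your proof is correct and follows the same overall skeleton as the paper's: extract weak limits from the uniform $\norm{\cdot}_{\Vkp}$ bound, identify the limit of $\p_t\Ip v_{k_j}$ and the endpoint values via integration by parts in time, and prove~\eqref{eq:weak_convergence_mass_lumped} by the Riesz splitting $(\p_t \Ip v_{k_j},w_\ell)_{\Omkj}=(\p_t \Ip v_{k_j},w_\ell)_\Omega+(\p_t \Ip v_{k_j},w_\ell-R_{k_j}w_\ell)_{\Omkj}$. Two of your technical steps differ from the paper's, and both are legitimate. First, to identify $\p_t v$ you replace $v_{k_j}$ by its continuous reconstruction $\Ip v_{k_j}$, showing via $\Ip v_k - v_k = -(t_n-t)\,\p_t\Ip v_k$ that the difference vanishes strongly in $L^2(0,T;H^{-1}(\Omega))$, and then integrate by parts cleanly against separated test functions $\phi\varphi$; the paper instead keeps $v_{k_j}$, tests with $\phi\in C^1([0,T];H^1_0(\Omega))$, and shows directly that the resulting quadrature-type remainder $\sum_n\int_{I_n}(\p_t\Ip v_{k_j},\phi(t)-\phi(t_{n-1}))_\Omega\,\mathrm{d}t$ vanishes. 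These are equivalent in substance. Second, for the consistency term in~\eqref{eq:weak_convergence_mass_lumped} you pair with $\norm{\cdot}_{\dualk}$ and require $\norm{\nabla(w_\ell-R_{k_j}w_\ell)}_\Omega\to 0$; note that~\eqref{quasi-interp-approx-property} alone gives only $H^1$-stability, so your density argument additionally needs the standard second-order estimate $\norm{\nabla(\phi-R_k\phi)}_\Omega\lesssim h_k\abs{\phi}_{H^2(\Omega)}$ for smooth $\phi$ (available in the cited reference but not stated in the paper). The paper avoids this by telescoping in time, writing $\int_0^T(\p_t\Ip v_{k_j},w_\ell-R_{k_j}w_\ell)_{\Omkj}\,\mathrm{d}t$ as a finite sum of terms $(\Ip v_{k_j}(t_q)-\Ip v_{k_j}(t_{q-1}),z_{j,\ell,q})_{\Omkj}$ and using only the stated first-order $L^2$ bound $\norm{z_{j,\ell,q}}_{\Omkj}\lesssim h_{k_j}\norm{\nabla w_\ell}_\Omega$ together with the uniform $L^\infty(L^2)$ bound from Lemma~\ref{lem:vkp_infty_bound}, which yields an explicit $O(h_{k_j})$ rate. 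Your route is slightly shorter but leans on one extra (standard) approximation property; the paper's is fully self-contained in the stated estimates.
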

\begin{proof}
We give the proof only for the case of a sequence in $\Vkp$ as the proof for the case of $\Vkm$ is very similar. The hypothesis $\sup_{k\in\N}\norm{v_k}_{\Vkp}<\infty$ implies that the sequence $\{v_k\}_{k\in\N}$ is uniformly bounded in $X$, and also that $\{\p_t \Ip v_k\}_{k\in \N} $ is uniformly bounded in $L^2(0,T;H^{-1}(\Omega))$ by Lemma~\ref{mass-lump-H10-dual-bound}. Furthermore, Lemma~\ref{lem:vkp_infty_bound} implies that $\{v_k(T)\}_{k\in\N}$ and $\{v_k(0)\}_{k\in\N}$ are uniformly bounded sequences in $L^2(\Omega)$. So, there exists a subsequence such that $v_{k_j}\rightharpoonup v$ in $X$, that $\p_t \Ip v_{k_j} \rightharpoonup \varphi $ in $L^2(0,T;H^{-1}(\Omega))$, that $v_{k_j}(T)\rightharpoonup v_T$ and $v_{k_j}(0)\rightharpoonup v_0$ as $j\tends \infty$, for some $v\in X$, some $\varphi \in L^2(0,T;H^{-1}(\Omega))$, and some $v_T,\,v_0 \in \LOm$.
To show that $\varphi=\p_t v$ so that $v\in Y$, and that $v(T)=v_T$ and $v(0)=v_0$, let $\phi \in C^1([0,T];H^1_0(\Omega))$ be arbitrary. Then, using integration-by-parts for the piecewise constant function $v_k\in \Vkp$ and the identities in~\eqref{derivatives_Ip}, we find that
\begin{multline}\label{eq:weak_convergence_1}
\int_0^T \langle \p_t \Ip v_{k_j},\phi \rangle + (v_{k_j},\p_t \phi)_{\Omega} \mathrm{d}t= (v_k(T),\phi(T))_{\Omega}-(v_k(0),\phi(0))_{\Omega}
\\+\sum_{n=1}^{N_{k_j}}\int_{I_n}(\p_t \Ip v_{k_j},\phi(t)-\phi(t_{n-1}))_{\Omega}\mathrm{d}t.
\end{multline}
It is straightforward to show that the last term on the right-hand side of~\eqref{eq:weak_convergence_1} vanishes in the limit $j\tends \infty$ since $\{\p_t \Ip v_k\}_{k\in \N} $ is uniformly bounded in $L^2(0,T;H^{-1}(\Omega))$ and since $\phi \in C^1([0,T];H^1_0(\Omega))$.
Therefore, passing to the limit in~\eqref{eq:weak_convergence_1}, we get
\begin{equation}\label{eq:weak_convergence_2}
\int_0^T \langle \varphi ,\phi \rangle + (v,\p_t \phi)_{\Omega} \mathrm{d}t = (v_T,\phi(T))_{\Omega}-(v_0,\phi(0))_{\Omega}  \quad \forall \phi \in C^1([0,T];H^1_0(\Omega)).
\end{equation}
It is then easy to deduce from~\eqref{eq:weak_convergence_2} that $v \in Y$ with  $\p_t v = \varphi \in L^2(0,T;H^{-1}(\Omega))$, and moreover that $v(T)=v_T$ and that $v(0)=v_0$.

To show~\eqref{eq:weak_convergence_mass_lumped}, let $v_\ell\in\mathbb{V}_\ell$, $\ell\in\N$ be fixed but arbitrary. Using~\eqref{eq:Rk_riesz}, we have
\begin{equation}
\int_0^T \left(\p_t \Ip v_{k_j},w_\ell\right)_{\Omkj}\mathrm{d}t = \int_0^T ( \p_t \Ip v_{k_j},w_\ell)_\Omega\mathrm{d}t + \int_0^T (\p_t \Ip v_{k_j},w_\ell-R_{k_j} w_\ell)_{\Omkj}\mathrm{d}t.
\end{equation}
It follows from the weak convergence properties above that $\int_0^T ( \p_t \Ip v_{k_j},w_\ell)_\Omega\mathrm{d}t \tends \int_0^T \langle \p_t v,w_\ell\rangle\mathrm{d}t$ as $j\tends \infty$.
Recall that the function $v_\ell$ is piecewise constant-in-time with respect to the partition $\mathcal{J}_\ell$ of $[0,T]$. In a slight abuse of notation, let $\{t_q\}_{q=0}^{M_\ell}$ denote the nodes of $\mathcal{J}_\ell$, i.e.\ $\mathcal{J}_\ell=\{[t_{q-1},t_q]\}_{q=0}^{M_\ell}$. Therefore, 
\begin{equation}\label{eq:weak_convergence_3}
\int_0^T (\p_t \Ip v_{k_j},w_\ell-R_{k_j} w_\ell)_{\Omkj}\mathrm{d}t = \sum_{q=1}^{M_\ell} (\Ip v_{k_j}(t_q)-\Ip v_{k_j}(t_{q-1}) , z_{j,\ell,q})_{\Omkj},
\end{equation}
where $z_{j,\ell,q} \coloneqq w_\ell|_{(t_{q-1},t_q)}-R_{k_j}\left(w_\ell|_{(t_{q-1},t_q)}\right)$. 
The $L^2$-approximation bound in~\eqref{quasi-interp-approx-property} and ~\eqref{eq:L^2-mass_lumped_bound} together imply that $\norm{z_{j,\ell,q}}_{\Omk}\lesssim h_{k_j} \norm{\nabla w_{\ell}|_{(t_{q-1},t_q)}}_\Omega$ for all $q=1,\dots, M_j$ and all $j\in\N$.
Also, Lemma~\ref{lem:vkp_infty_bound} and \eqref{eq:L^2-mass_lumped_bound} imply that $\norm{\Ip v_{k_j}(t_q)}_{\Omk}\lesssim \norm{v_{k_j}}_{\Vkp}$, so there exists a constant $C_*$ independent of $k\in\N$ such that $\norm{\Ip v_{k_j}(t_q)}_{\Omkj}\leq C_*$ for all $j\in\N$ and all $q\in \{0,\dots,N_\ell\}$.
Applying these bounds in~\eqref{eq:weak_convergence_3}, we see that
\begin{equation}\label{eq:weak_convergence_4}
\lim_{j\tends \infty} \left\lvert \int_0^T (\p_t \Ip v_{k_j},w_\ell-R_{k_j} w_\ell)_{\Omkj}\mathrm{d}t \right\rvert \lesssim \lim_{j\tends \infty}C_* h_{k_j} \sum_{q=1}^{M_\ell} \norm{\nabla w_\ell|_{(t_{q-1},t_q)}}_{\Omega}  =0.
\end{equation}
Therefore we obtain~\eqref{eq:weak_convergence_mass_lumped}.
\end{proof}

The following Lemma generalizes a well-known inequality for real-valued functions of bounded variation to the case of functions in $\Vkpm$.
\begin{lemma}\label{lem:translation_bound}
Let $\norm{\cdot}:V_k\tends \R_{\geq 0}$ be a norm on $V_k$. Then, for all $v\in \Vkpm$, we have
\begin{equation}\label{eq:translation_bound}
\sup_{s\in (0,T)}\int_0^{T-s} s^{-1} \norm{v(t+s) -v(t)}\mathrm{d}t \leq \int_0^T \norm{\p_t \Ipm v}\mathrm{d}t.
\end{equation}
\end{lemma}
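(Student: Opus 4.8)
The plan is to reduce both sides of~\eqref{eq:translation_bound} to explicit expressions in the nodal values of $v$ and then compare them directly; this is the discrete counterpart of the classical total-variation estimate. I would treat the case $v\in\Vkp$ in detail, as the case $v\in\Vkm$ follows by the same argument after reversing the time variable and replacing $\Ip$ by $\In$.

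First I would evaluate the right-hand side exactly. Writing $v_n\coloneqq v|_{I_n}$ for $n\in\{1,\dots,\Nk\}$ and $v_0\coloneqq v(0)$, the identity $\p_t\Ip v|_{I_n}=-\tau_k^{-1}\jump{v}_{n-1}$ from~\eqref{derivatives_Ip}, combined with the left-continuity of $v$ (so that $\jump{v}_{n-1}=v_{n-1}-v_n$, and $\jump{v}_0=v_0-v_1$), gives $\p_t\Ip v|_{I_n}=\tau_k^{-1}(v_n-v_{n-1})$. Since $\p_t\Ip v$ is constant on each $I_n$ and $\abs{I_n}=\tau_k$, integrating yields
\[
\int_0^T\norm{\p_t\Ip v}\,\mathrm{d}t=\sum_{n=1}^{\Nk}\norm{v_n-v_{n-1}},
\]
so the right-hand side is precisely the total variation of the finite sequence $(v_0,v_1,\dots,v_{\Nk})$ measured in the norm $\norm{\cdot}$.

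Next I would decompose the left-hand integrand. Using the convention $v(t)=v_n$ for $t\in(t_{n-1},t_n]$, the difference $v(t+s)-v(t)$ telescopes into the sum of the increments $v_{i+1}-v_i$ across exactly those nodes $t_i$ with $t_i\in[t,t+s)$, i.e.\ $v(t+s)-v(t)=\sum_{i=0}^{\Nk-1}(v_{i+1}-v_i)\,\mathbf{1}[t_i\in[t,t+s)]$. The triangle inequality then gives the pointwise bound $\norm{v(t+s)-v(t)}\leq\sum_{i=0}^{\Nk-1}\norm{v_{i+1}-v_i}\,\mathbf{1}[t_i\in[t,t+s)]$. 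Integrating in $t$, exchanging sum and integral, and noting that $\{t\in(0,T-s):t_i\in[t,t+s)\}=(t_i-s,t_i]\cap(0,T-s)$ has measure at most $s$ for each node, I obtain
\[
\int_0^{T-s}\norm{v(t+s)-v(t)}\,\mathrm{d}t\leq s\sum_{i=0}^{\Nk-1}\norm{v_{i+1}-v_i}=s\int_0^T\norm{\p_t\Ip v}\,\mathrm{d}t.
\]
Dividing by $s$ and taking the supremum over $s\in(0,T)$ gives~\eqref{eq:translation_bound}.

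The argument is elementary once both sides are written out, and the only step requiring care is the bookkeeping in the decomposition: correctly identifying, through the left-continuity convention, which nodal increments enter $v(t+s)-v(t)$, and verifying the measure bound $\leq s$ for each contributing node. Note that the windows $(t_i-s,t_i]$ may overlap when $s>\tau_k$, but this is harmless, since the triangle inequality is applied pointwise before integration; this replaces the Fubini step in the classical estimate $\int\norm{w(t+s)-w(t)}\,\mathrm{d}t\leq s\int\norm{w'}\,\mathrm{d}t$ for absolutely continuous $w$.
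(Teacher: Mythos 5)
Your proof is correct, and it takes a genuinely different route from the paper's. The paper proves the bound by mollifying $v$ in time on an interior window $[\su,\so]$, computing $\p_t v_\epsilon$ as a sum of jumps weighted by the mollifier, invoking the fundamental theorem of calculus and Fubini for the smooth approximant, and then passing to the limits $\epsilon\tends 0$, $\su\tends 0$, $\so\tends T$ — i.e.\ it transplants the classical BV argument onto the discrete function via regularization. You instead exploit the piecewise-constant structure directly: you telescope $v(t+s)-v(t)$ into the nodal increments whose node lies in $[t,t+s)$, apply the triangle inequality pointwise, and observe that each node contributes on a set of measure at most $s$. Your bookkeeping is right — under the left-continuity convention $v(t)=v_n$ on $(t_{n-1},t_n]$, the indicator $\mathbf{1}[t_i\in[t,t+s)]$ selects exactly the increments between the interval containing $t$ and the one containing $t+s$, and the set $(t_i-s,t_i]\cap(0,T-s)$ indeed has measure $\leq s$. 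Your approach is more elementary and self-contained (no approximation or limiting argument is needed, and it makes transparent which jumps contribute and with what weight; in fact the node $t_0=0$ never contributes, so you even have a slightly sharper intermediate bound than stated). The paper's mollification argument is heavier here but mirrors the continuous proof and would adapt more readily to functions that are not piecewise constant in time. Both establish the lemma.
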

\begin{proof}
We show the proof for the case of $v\in \Vkp$, as the argument for $\Vkm$ is quite similar.
Let $s\in(0,T)$ be arbitrary, and let $\su,\,\so\in \R$ be any positive real numbers such that $0<\su<\so<T$ and $\so-s>\su$. 
 For each $\epsilon>0 $ such that $\epsilon< \min(\su,T-\so)$, let $\eta_{\epsilon}=\frac{1}{\epsilon}\eta_0\left(\frac{\cdot}{\epsilon}\right)$ where $\eta_0$ is the standard mollifier on $\R$, so $\eta_{\epsilon}\in C^\infty_0(\R)$ is a smooth nonnegative function with $\supp \eta_{\epsilon}\subset [-\epsilon,\epsilon]$ and $\int_\R \eta_\epsilon(t)\mathrm{d}t =1$.
 For $v\in\Vkp$, define $v_{\epsilon}:[\su,\so]\tends V_k$ by $v_{\epsilon}(t)\coloneqq (v * \eta_{\epsilon})(t)=\int_{0}^T v(s)\eta_\epsilon(t-s)\mathrm{d}s$.
  Note that $v_{\epsilon}$ is well defined on $[\su,\so]$ and depends only on $v|_{(0,T)}$. Since $\eta(t-\cdot)\in C^\infty_0(0,T)$ for all $t\in[\su,\so]$, integration-by-parts shows that
  $\partial_t v_{\epsilon}(t) = \sum_{n=1}^{\Nk-1} \jump{v}_{n}\eta_{\epsilon}(t-t_n)$ for all $t\in[\su,\so]$.
  Therefore, the triangle inequality leads to
 \begin{equation}\label{eq:mollification_bound}
 \int_{\su}^{\so} \norm{\p_t v_{\epsilon}}\mathrm{d}t\leq \sum_{n=1}^{\Nk-1}\norm{\jump{v}_n}\int_{\su}^{\so} \eta_\epsilon(t-t_n)\mathrm{d}t\leq  \sum_{n=1}^{\Nk-1}\norm{\jump{v}_{n}}\leq \int_0^T\norm{\p_t\Ip v}\mathrm{d}t.
 \end{equation}
  Furthermore, $\int_{\su}^{\so}\norm{v-v_{\epsilon}}\mathrm{d}t\tends 0$ as $\epsilon\tends 0$. Therefore,
  \begin{multline}
\int_{\su}^{\so-s}\norm{v(t+s)-v(t)}\mathrm{d}t=\lim_{\epsilon\tends 0 }\int_{\su}^{\so-s}\norm{v_\epsilon(t+s)-v_\epsilon(t)}\mathrm{d}t
\\ \leq \limsup_{\epsilon\tends 0}\int_{\su}^{\so-s}\int_{t}^{t+s}{\norm{\p_t v_{\epsilon}({r})}\mathrm{d}{r}}\mathrm{d}t \leq\limsup_{\epsilon\tends 0} s\int_{\su}^{\so}\norm{\p_t v_{\epsilon}}\mathrm{d}{r}
\leq s\int_0^T\norm{\p_t\Ip v}\mathrm{d}{r},
  \end{multline}
  where  we invoke Fubini's Theorem and \eqref{eq:mollification_bound} in the second line above. We then readily conclude~\eqref{eq:translation_bound} by letting $\su\tends 0$ and $\so\tends T$, and recalling that $s\in(0,T)$ is arbitrary.
 \end{proof}

A direct consequence of Lemma~\ref{lem:translation_bound} and the definition of $\norm{\cdot}_{\Vkpm}$ in~\eqref{eq:discrete_vkp_combined} is that
 \begin{equation}\label{eq:translation_bound_2}
\sup_{s\in (0,T)}\int_0^{T-s} s^{-1} \norm{v(t+s) -v(t)}_{\dualk}\mathrm{d}t \leq \sqrt{T}\norm{v}_{\Vkpm} \quad \forall v\in \Vkpm.
 \end{equation}

\begin{theorem}[Compactness in $L^p(L^2)$]\label{precompact-result}
Let $\{v_k\}_{k\in\N}\subset X$ be a sequence such that $v_k\in \mathbb{V}_k^+$ for each $k\in\mathbb{N}$ and $\sup_{k\in\N}\norm{v_k}_{\mathbb{V}_k^+}<\infty$. Then, $\{v_k\}_{k\in\N}$ is pre-compact in $L^p(0,T;L^2(\Omega))$ for any $1\leq p <\infty$. {This result holds analogously for sequences $\{\tilde{v}_k\}_{k\in\mathbb{N}}$ such that $\tilde{v}_k\in \mathbb{V}_k^-$ for each $k\in\mathbb{N}$ and  $\sup_{k\in\N}\norm{\tilde{v}_k}_{\mathbb{V}_k^-}<\infty$.}
\end{theorem}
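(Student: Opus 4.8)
The plan is to apply the Aubin--Lions--Simon compactness theorem directly to the Gelfand triple $H^1_0(\Omega)\hookrightarrow\LOm\hookrightarrow H^{-1}(\Omega)$, in which the embedding $H^1_0(\Omega)\hookrightarrow\LOm$ is compact by the Rellich--Kondrachov theorem and $\LOm\hookrightarrow H^{-1}(\Omega)$ is continuous. Since every $v_k\in\Vkp$ belongs to the continuous-in-time Bochner space $L^2(0,T;H^1_0(\Omega))$, I would work throughout with genuine continuous Bochner norms, using the discrete norms only to extract the required uniform bounds. Simon's criterion then reduces the claim to two conditions, verified uniformly in $k$: boundedness in $L^p(0,T;H^1_0(\Omega))$, and uniform smallness of time translations measured in the weaker norm of $H^{-1}(\Omega)$. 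I would first establish pre-compactness in $L^1(0,T;\LOm)$ and then bootstrap to all exponents $1\le p<\infty$.

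For the boundedness condition, the hypothesis $\sup_{k\in\N}\norm{v_k}_{\Vkp}<\infty$ together with~\eqref{eq:discrete_vkp_norm} gives a uniform bound on $\int_0^T\norm{\nabla v_k}_\Omega^2\,\mathrm{d}t$, hence uniform boundedness of $\{v_k\}$ in $L^2(0,T;H^1_0(\Omega))$ and, a fortiori, in $L^1(0,T;H^1_0(\Omega))$. For the time-translation condition, the crucial point is that the $v_k$ are only piecewise constant in time, so they do not possess a distributional time derivative in $L^2(0,T;H^{-1}(\Omega))$; rather than bounding a derivative I would invoke the bounded-variation-type estimate~\eqref{eq:translation_bound_2}, which yields $\int_0^{T-s}\norm{v_k(t+s)-v_k(t)}_{\dualk}\,\mathrm{d}t\le s\sqrt{T}\,\norm{v_k}_{\Vkp}$. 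Since $v_k(t+s)-v_k(t)\in V_k$ for each $t$, the norm equivalence of Lemma~\ref{mass-lump-H10-dual-bound}, whose constants are uniform in $k$, converts this into $\int_0^{T-s}\norm{v_k(t+s)-v_k(t)}_{H^{-1}(\Omega)}\,\mathrm{d}t\lesssim s$, uniformly in $k$. This is precisely the uniform vanishing of the $L^1(0,T-s;H^{-1}(\Omega))$-modulus of time continuity required by Simon's theorem with $p=1$, and applying the theorem gives pre-compactness of $\{v_k\}$ in $L^1(0,T;\LOm)$.

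To upgrade to arbitrary $1\le p<\infty$, I would use the uniform $L^\infty(0,T;\LOm)$ bound from Lemma~\ref{lem:vkp_infty_bound}, namely $\norm{v_k(t)}_\Omega\le\norm{v_k}_{\Vkp}\le M$ for all $t\in[0,T]$. Given any subsequence, extract a further subsequence $v_{k_j}\to v$ in $L^1(0,T;\LOm)$; passing to a pointwise-a.e.\ subsequence shows $\norm{v(t)}_\Omega\le M$ for a.e.\ $t$, so that $g_j(t)\coloneqq\norm{v_{k_j}(t)-v(t)}_\Omega\le 2M$ a.e. The elementary bound $g_j^p\le(2M)^{p-1}g_j$ then gives $\int_0^T g_j^p\,\mathrm{d}t\le(2M)^{p-1}\int_0^T g_j\,\mathrm{d}t\to 0$, i.e.\ $v_{k_j}\to v$ in $L^p(0,T;\LOm)$. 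Since every subsequence admits a further $L^p$-convergent subsequence, $\{v_k\}$ is pre-compact in $L^p(0,T;\LOm)$ for every $1\le p<\infty$. The analogous statement for $\{\tilde v_k\}\subset\Vkm$ follows verbatim, replacing $\Ip$ by $\In$ and using that~\eqref{eq:translation_bound_2} is stated for $\Vkpm$.

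The main obstacle to watch for is the time-translation estimate: because the discrete-in-time functions jump between time slabs, a direct time-derivative bound in $L^2(0,T;H^{-1}(\Omega))$ is unavailable, and one must instead rely on the translation (bounded-variation) estimate of Lemma~\ref{lem:translation_bound} and reconcile the discrete dual norm $\norm{\cdot}_{\dualk}$ with the continuous norm of $H^{-1}(\Omega)$ through Lemma~\ref{mass-lump-H10-dual-bound}, taking care that the equivalence constants are uniform in $k$. Once these bounds are in place, the remaining steps are standard.
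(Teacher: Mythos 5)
Your argument is correct, but it reaches the conclusion by a genuinely different route from the paper's. The paper applies the two-space form of Simon's criterion (\cite[Theorem~3]{simon1986compact}) directly for each exponent $p$, verifying the time-translation condition in the target space $L^p(0,T-s;\LOm)$ itself: writing $r_{k,s}=v_k(\cdot+s)-v_k$, it combines the uniform $L^\infty(L^2)$ bound of Lemma~\ref{lem:vkp_infty_bound} with the interpolation $\norm{r_{k,s}}_{\Omk}^2=(r_{k,s},r_{k,s})_{\Omk}\leq\norm{r_{k,s}}_{\dualk}\norm{\nabla r_{k,s}}_\Omega$ and the translation estimate~\eqref{eq:translation_bound_2} to obtain the explicit modulus $\norm{v_k(\cdot+s)-v_k}_{L^p(0,T-s;\LOm)}\lesssim s^{1/(2p)}M_*$, uniformly in $k$. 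You instead invoke the three-space (Aubin--Lions--Simon) form with the translation condition measured only in $H^{-1}(\Omega)$, converting $\norm{\cdot}_{\dualk}$ to $\norm{\cdot}_{H^{-1}(\Omega)}$ via Lemma~\ref{mass-lump-H10-dual-bound}, which gives pre-compactness in $L^1(0,T;\LOm)$; you then bootstrap to all $p<\infty$ with the $L^\infty(L^2)$ bound and the elementary inequality $g_j^p\leq(2M)^{p-1}g_j$. Both proofs ultimately rest on the same two ingredients --- the discrete bounded-variation estimate~\eqref{eq:translation_bound_2} and Lemma~\ref{lem:vkp_infty_bound} --- but your version trades the paper's direct quantitative $L^2$-translation bound for a more modular argument that needs the stronger form of Simon's theorem and the $k$-uniformity of the equivalence constants in Lemma~\ref{mass-lump-H10-dual-bound}, while the paper's version dispenses with that lemma entirely here and yields an explicit rate in $s$. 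Your bootstrapping step and the reduction to the $\Vkm$ case are both sound.
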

\begin{proof}
It is enough to prove the result in the case of $\Vkp$ by symmetry of $\Vkp$ and $\Vkm$ under reversal of the time variable. 
We show that $\{v_k\}_{k\in\N}$ satisfies the hypotheses of \cite[Theorem~3]{simon1986compact}, which asserts that $\{v_k\}_{k\in\N}$ is pre-compact in $L^p(0,T;L^2(\Omega))$ provided that $\{v_k\}_{k\in\N}$ is uniformly bounded in $L^1(0,T;B_1)$ for some Banach space $B_1$ that is compactly embedded in $L^2(\Omega)$, and that for all $\epsilon>0$, there is an $s_0>0$ such that for all $s<s_0$, $\norm{v_k(\cdot+s)-v_k(\cdot)}_{L^p(0,T-s;L^2(\Omega))}<\epsilon$ for all $k\in\N$. 
The first hypothesis holds immediately since $\{v_k\}_{k\in\N}$ is uniformly bounded in $X=L^2(0,T;H^1_0(\Omega))$ with $H^1_0(\Omega)$ compactly embedded in $L^2(\Omega)$. %The second condition 
To show the second hypothesis, let $M_*\coloneqq \sup_{k\in\N} \norm{v_k}_{\Vkp}$, and let $r_{k,s}\coloneqq v_k(\cdot+s)-v_k$ for each $k\in\N$ and $s\in(0,T)$.
Then, we find that
\begin{multline}\label{eq:precompact_result_1}
\int_0^{T-s}\norm{r_{k,s}}_\Omega^p\mathrm{d}t\leq 2^{p-1}M_*^{p-1}\int_0^{T-s}\norm{r_{k,s}}_\Omega\mathrm{d}t \leq 2^{p-1}M_*^{p-1}\int_0^{T-s}\norm{r_{k,s}}_{\Omk}\mathrm{d}t 
\\ \leq  2^{p-1}M_*^{p-1}\left(\int_0^{T-s}\norm{\nabla r_{k,s}}_\Omega\mathrm{d}t \right)^\frac{1}{2}\left(\int_0^{T-s}\norm{r_{k,s}}_{\dualk}\mathrm{d}t\right)^\frac{1}{2}
 \leq 2^{p-\frac{1}{2}}\sqrt{T} M_*^p s^{\frac{1}{2}},
\end{multline}
where we have applied first Lemma~\ref{lem:vkp_infty_bound}, then the first inequality of~\eqref{eq:L^2-mass_lumped_bound}, followed by the Cauchy--Schwarz inequality and finally~\eqref{eq:translation_bound_2}.
Thus we obtain from \eqref{eq:precompact_result_1} that $\norm{v_k(\cdot+s)-v_k(\cdot)}_{L^p(0,T-s,L^2(\Omega))} \lesssim  s^{\frac{1}{2p}} M_*$  for all $k\in\N$, and all $s\in(0,T)$.
This verifies the second hypothesis above and thus completes the proof.
\end{proof}

}

\section{Proofs of Theorems~\ref{existence-space-time} and~\ref{conv-main-thm} and of Corollary~\ref{convergence-cor}}\label{sec8:proofs_of_convergence}

In this section, we assume that the hypotheses of Theorem~\ref{uniqueness-space-time} hold, i.e.\ that $m_0$ is nonnegative a.e.\ in $\Omega$, that $G$ is nonnegative in the sense of distributions in $L^2(0,T;H^{-1}(\Omega))$, that $F$ is strictly monotone, and that $S$ is monotone. We now prove convergence of the finite element approximations $\{({u}_{k},m_{k})\}_{k\in\mathbb{N}}$ that are determined by the scheme \eqref{weakform-space-time-discrete}. 

\subsection{KFP equation} 
For each $k\in\N$, let $\tilde{b}_k $ be an element of $\DpH[u_k]$ for which \eqref{weakform-space-time-discrete} holds; in cases where $\tilde{b}_k$ may be nonunique, we may choose one such element.

\begin{lemma}[Weak convergence of discrete KFP approximations]\label{mk-convergence}
There exists an $m\in Y$ and a $\tilde{b}_* \in L^\infty(\QT;\R^\dim)$ such that, after passing to a subsequence without change of notation, $ \p_t \Ip m_k\rightharpoonup \partial_t m$ in $L^2(0,T;H^{-1}(\Omega))$, $m_k\rightharpoonup m$ in $X$, $m_k(T)\rightharpoonup m(T)$ in $L^2(\Omega)$, $m_k\tends m$ in $L^p(0,T;L^2(\Omega))$ for all $p\in[1,\infty)$, and $\tilde{b}_k \rightharpoonup \tilde{b}_*$ in $L^2(\QT;\R^\dim)$ as $k\tends \infty$.
Furthermore, we have $m(0)=m_0$ in $L^2(\Omega)$ and
	\begin{equation}\label{mfg-pdi-sys-time-classical-intro-thm-discrete}
		\int_0^T\left\langle \partial_t{m},\phi\right\rangle_{}+\nu(\nabla {m},\nabla \phi)_{\Omega}+({m}\tilde{b}_*,\nabla \phi)_{\Omega}\mathrm{d}t=\int_0^T\langle {G},\phi\rangle_{}\mathrm{d}t
	\end{equation}
	for all $\phi\in X$.
\end{lemma}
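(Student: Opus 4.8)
The plan is to combine the uniform stability bound of Theorem~\ref{discrete-mfg-existence} with the weak and strong compactness results of Section~\ref{sec7}, and then to pass to the limit in the discrete KFP equation~\eqref{weakform2-space-time-discrete}. First I would invoke Theorem~\ref{discrete-mfg-existence} to obtain $\sup_{k}\norm{m_k}_{\Vkp}\lesssim 1+\norm{G}_{L^2(0,T;H^{-1}(\Omega))}+\norm{m_0}_\Omega<\infty$. Theorem~\ref{thm:weak_convergence} then yields, along a subsequence, an $m\in Y$ with $m_k\rightharpoonup m$ in $X$, $\p_t\Ip m_k \rightharpoonup \p_t m$ in $L^2(0,T;H^{-1}(\Omega))$, and $m_k(T)\rightharpoonup m(T)$, $m_k(0)\rightharpoonup m(0)$ in $\LOm$. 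Passing to a further subsequence, Theorem~\ref{precompact-result} gives strong convergence of $m_k$ in $L^p(0,T;L^2(\Omega))$, whose limit must coincide with $m$ by uniqueness of the weak $L^2(\QT)$ limit; convergence holds for every $p\in[1,\infty)$ after interpolating the $L^1(L^2)$ convergence against the uniform $L^\infty(L^2)$ bound furnished by Lemma~\ref{lem:vkp_infty_bound}. Since Lemma~\ref{Prop1-time} bounds $\{\tilde{b}_k\}$ uniformly in $L^\infty(\QT;\R^d)$ by $L_H$, a final subsequence extraction produces a weak-$*$ limit $\tilde{b}_*$ with $\norm{\tilde{b}_*}_{L^\infty(\QT;\R^d)}\leq L_H$, which in particular gives $\tilde{b}_k\rightharpoonup \tilde{b}_*$ in $L^2(\QT;\R^d)$ since $\QT$ has finite measure.

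Next I would identify the initial condition. Using the $L^2$-stability of $R_k$ and its approximation property~\eqref{quasi-interp-approx-property}, together with the density of $H^1_0(\Omega)$ in $\LOm$, one shows $R_k m_0\tends m_0$ strongly in $\LOm$; combined with $m_k(0)=R_k m_0 \rightharpoonup m(0)$ and uniqueness of weak limits, this yields $m(0)=m_0$.

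The core step is the passage to the limit in~\eqref{weakform2-space-time-discrete}. Exploiting the nestedness of the spatial and temporal meshes, I would first fix $\ell\in\N$ and a test function $\phi\in\mathbb{V}_\ell$, which lies in $\mathbb{V}_k$ for every $k\geq \ell$ and is therefore admissible. The time-derivative term $\int_0^T(\p_t\Ip m_k,\phi)_{\Omk}\,\mathrm{d}t$ converges to $\int_0^T\langle\p_t m,\phi\rangle\,\mathrm{d}t$ by~\eqref{eq:weak_convergence_mass_lumped}, which precisely absorbs the discrepancy between the mass-lumped and exact inner products. Writing $A_k=\nu\mathbb{I}+\Dk$, the term $\int_0^T\nu(\nabla m_k,\nabla\phi)_\Omega\,\mathrm{d}t$ converges by the weak convergence $\nabla m_k\rightharpoonup\nabla m$ in $L^2(\QT;\R^d)$, while the stabilization contribution is controlled by $\norm{\Dk}_{L^\infty(\Omega;\R^{d\times d})}\lesssim h_k\tends 0$ against the uniformly bounded $\norm{\nabla m_k}_{\QT}$ and hence vanishes. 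The right-hand side is independent of $k$.

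The main obstacle is the transport term $\int_0^T(m_k\tilde{b}_k,\nabla\phi)_\Omega\,\mathrm{d}t$, which is nonlinear in the approximations and cannot be treated by weak$\times$weak convergence alone. Here I would rewrite it as $\int_{\QT}\tilde{b}_k\cdot(m_k\nabla\phi)\,\mathrm{d}x\,\mathrm{d}t$ and use that $\nabla\phi\in L^\infty(\QT;\R^d)$, since $\phi$ is piecewise affine in space and piecewise constant in time, so that the \emph{strong} convergence $m_k\tends m$ in $L^2(\QT)$ upgrades to $m_k\nabla\phi\tends m\nabla\phi$ strongly in $L^2(\QT;\R^d)$; pairing this with the weak convergence $\tilde{b}_k\rightharpoonup\tilde{b}_*$ in $L^2(\QT;\R^d)$ gives convergence to $\int_0^T(m\tilde{b}_*,\nabla\phi)_\Omega\,\mathrm{d}t$ by the standard weak--strong lemma. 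This is precisely where the compactness of Theorem~\ref{precompact-result} is indispensable. Collecting the limits establishes~\eqref{mfg-pdi-sys-time-classical-intro-thm-discrete} for all $\phi\in\bigcup_{\ell}\mathbb{V}_\ell$. Finally, since each term of~\eqref{mfg-pdi-sys-time-classical-intro-thm-discrete} defines a bounded linear functional of $\phi$ on $X$ (using $m\in Y$, $\tilde{b}_*\in L^\infty(\QT;\R^d)$, and $G\in L^2(0,T;H^{-1}(\Omega))$), and since $\bigcup_\ell\mathbb{V}_\ell$ is dense in $X$ as $h_k,\tau_k\tends 0$, the identity extends by continuity to all $\phi\in X$, completing the proof.
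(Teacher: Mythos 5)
Your proposal is correct and follows essentially the same route as the paper's proof: uniform $\Vkp$-bounds from Theorem~\ref{discrete-mfg-existence}, weak and strong compactness via Theorems~\ref{thm:weak_convergence} and~\ref{precompact-result}, identification of $m(0)=m_0$ through $R_k m_0\tends m_0$, and passage to the limit in~\eqref{weakform2-space-time-discrete} for fixed $\phi\in\mathbb{V}_\ell$ using~\eqref{eq:weak_convergence_mass_lumped}, $\norm{\Dk}_{L^\infty}\lesssim h_k$, and the weak--strong pairing of $\tilde{b}_k$ with $m_k\nabla\phi$, followed by density of $\bigcup_\ell\mathbb{V}_\ell$ in $X$. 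The only cosmetic difference is that you extract $\tilde{b}_*$ as a weak-$*$ limit in $L^\infty$ while the paper takes a weak $L^2$ limit and recovers the $L^\infty$ bound via Mazur's theorem; both are fine.
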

\begin{proof}
{
Theorem~\ref{thm:weak_convergence} shows the existence of a $m\in Y$ and a subsequence of $\{m_k\}_{k\in\N}$, to which we pass without change of notation, that satisfies the weak convergence properties stated above. In particular, we have $m_k(0)=R_k m_0 \rightharpoonup m(0)$ as $k\tends \infty$, and since $R_k m_0 \tends m_0$ in $L^2(\Omega)$ as $k\tends \infty$, we conclude that $m(0)=m_0$.
Furthermore, the strong convergence $m_k\tends m$ in $L^p(0,T;L^2(\Omega))$ for all $p\in[1,\infty)$ as $k\tends \infty$ follows from Theorem~\ref{precompact-result}. 
Since $\{\tilde{b}_k\}_{k\in\N}$ is uniformly bounded in $L^\infty(\QT;\R^\dim)$ by Lemma~\ref{Prop1-time}, and hence also in $L^2(\QT;\R^\dim)$, we may pass to a further subsequence without change of notation to find that $\tilde{b}_k\rightharpoonup \tilde{b}_*$ for some $\tilde{b}_*\in L^2(\QT;\R^\dim)$. Mazur's Theorem and the Riesz--Fischer Theorem further imply that $\tilde{b}_* \in L^\infty(\QT;\R^\dim)$ with $\norm{\tilde{b}_*}_{L^\infty(\QT;\R^\dim)}\leq L_H$. 
It remains only to show~\eqref{mfg-pdi-sys-time-classical-intro-thm-discrete}. Let $\ell\in\N$ be fixed and let $\phi \in \mathbb{V}_\ell$ be fixed but arbitrary. Recall that $\mathbb{V}_\ell\subset \Vk$ for all $\ell\leq k$. Then, we have $m_k \nabla \phi \tends m \nabla \phi $ in $L^2(\QT;\R^\dim)$ as $k\tends \infty$ since $\nabla \phi \in L^\infty(\QT;\R^\dim)$. This implies that $\int_0^T(m_k,\tilde{b}_k{\cdot} \nabla \phi)_\Omega\mathrm{d}t\tends \int_0^T(m,\tilde{b}_*{\cdot} \nabla \phi)_\Omega \mathrm{d}t$ by weak-times-strong convergence. Next, we have $\int_0^T (\p_t \Ip m_k,\phi)_{\Omk} \mathrm{d}t \tends \int_0^T (\p_t m,\phi)_\Omega \mathrm{d}t$ by~\eqref{eq:weak_convergence_mass_lumped}. We also have $A_k\tends \nu \mathbb{I}$ as $k\tends \infty$ in $L^\infty(\Omega;\R^{\dim\times\dim})$ by Theorem~\ref{DMP-edge-stabilisation-result}.
Therefore we may pass to the limit in~\eqref{weakform2-space-time-discrete} to obtain~\eqref{mfg-pdi-sys-time-classical-intro-thm-discrete} for all $\phi \in \mathbb{V}_\ell$. Since $\bigcup_{\ell\in\N}\mathbb{V}_\ell$ is dense in $X$, we deduce that \eqref{mfg-pdi-sys-time-classical-intro-thm-discrete} holds for all $\phi \in X$.
}
\end{proof}
To establish the strong $L^2$-convergence of the density approximations at terminal time, we find it helpful to first establish some compactness properties associated to a class of related discrete dual problems.

{
\paragraph{Discrete dual problems} For each $k\in\N$, we define $z_k\in\Vkm$ as the unique solution of the discrete dual problem
\begin{equation}\label{eq:discrete_dual_problems}
\int_0^T - (\p_t \In z_k,v)_{\Omk}+(A_k \nabla z_k,\nabla v)_{\Omega}+(\tilde{b}_k{\cdot}\nabla z_k,v)_{\Omega}\mathrm{d}t =0 \quad \forall v \in \Vk,
\end{equation}
with the condition $z_k(T)=m_k(T)$ in $V_k$.

\begin{lemma}[Weak convergence of discrete dual solutions]\label{lem:discrete_dual_convergence}
After passing to a subsequence without change of notation, we have $z_k \rightharpoonup z$ in $X$, $\p_t \In z_k \rightharpoonup \p_t z$ in $L^2(0,T;H^{-1}(\Omega))$, $z_k\tends z \text{ in } L^2(0,T;L^2(\Omega))$ and $z_k(0)\tends z(0)$ in $L^2(\Omega)$ as $k\tends \infty$ where $z\in Y$ satisfies
\begin{equation}\label{eq:dual_problem_limit}
\int_0^T -\langle \p_t z, \phi\rangle +\nu(\nabla z,\nabla \phi)_{\Omega} + (\tilde{b}_*{\cdot} \nabla z,\phi)_{\Omega} \mathrm{d}t = 0 \quad \forall \phi \in X,
\end{equation}
with $z(T)=m(T)$ in  $L^2(\Omega)$, where $\tilde{b}_*\in L^\infty(\QT;\R^\dim)$ is as in Lemma~\ref{mk-convergence}.
\end{lemma}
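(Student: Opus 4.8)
The plan is to extract a limit $z$ by weak compactness and then upgrade the weak convergence to the stated strong statements through an energy estimate carrying a time weight that degenerates at $t=T$; this degeneracy is precisely the device that will let the otherwise intractable advection term pass to the limit. First I would establish a uniform bound $\sup_k\norm{z_k}_{\Vkm}\lesssim 1$. Testing \eqref{eq:discrete_dual_problems} with $v=z_k$ and using the backward analogue of the energy identity from the proof of Lemma~\ref{lem:vkp_infty_bound} controls $\norm{z_k(0)}_{\Omk}^2$, the time jumps, and $\int_0^T(A_k\nabla z_k,\nabla z_k)\,\dd t$ in terms of $\norm{z_k(T)}_{\Omk}^2=\norm{m_k(T)}_{\Omk}^2$; since no sign is available on the divergence of $\tilde{b}_k$, the advection contribution is absorbed by Young's inequality against $A_k\geq\nu\mathbb{I}$ together with a Gr\"onwall argument in backward time. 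The terminal data is uniformly bounded because $\norm{m_k(T)}_\Omega\leq\norm{m_k}_{\Vkp}\lesssim 1$ by Lemma~\ref{lem:vkp_infty_bound} and Theorem~\ref{discrete-mfg-existence}, and the bound on $\norm{\p_t\In z_k}_{\dualk}$ then follows from the equation. Theorem~\ref{thm:weak_convergence} (in its $\Vkm$ form) now yields a subsequence and $z\in Y$ with $z_k\rightharpoonup z$ in $X$, $\p_t\In z_k\rightharpoonup\p_t z$ in $L^2(0,T;H^{-1}(\Omega))$, and $z_k(0)\rightharpoonup z(0)$, $z_k(T)\rightharpoonup z(T)$ in $\LOm$, while Theorem~\ref{precompact-result} gives $z_k\tends z$ in $L^2(0,T;\LOm)$. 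Since $z_k(T)=m_k(T)\rightharpoonup m(T)$ by Lemma~\ref{mk-convergence}, we identify $z(T)=m(T)$.

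Testing \eqref{eq:discrete_dual_problems} against a fixed $\phi\in\mathbb{V}_\ell$, the time-derivative term passes to the limit by \eqref{eq:weak_convergence_mass_lumped} and the diffusion term by $A_k\tends\nu\mathbb{I}$ (Theorem~\ref{DMP-edge-stabilisation-result}); the bounded scalars $\tilde{b}_k\cdot\nabla z_k$ converge weakly in $L^2(\QT)$ to some $\chi$, so $z$ satisfies $\int_0^T-\langle\p_t z,\phi\rangle+\nu(\nabla z,\nabla\phi)_\Omega+(\chi,\phi)_\Omega\,\dd t=0$ for all $\phi\in X$. The crux is to identify $\chi=\tilde{b}_*\cdot\nabla z$, which is exactly the main obstacle: the advection term pairs the only weakly convergent $\tilde{b}_k\rightharpoonup\tilde{b}_*$ with the only weakly convergent $\nabla z_k\rightharpoonup\nabla z$, and a product of two weak limits need not converge to the product of the limits. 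This is genuinely delicate, since strong convergence of $\nabla z_k$ up to $t=T$ would be equivalent to strong convergence of the rough terminal data $m_k(T)$, precisely the hard point the paper resolves only afterwards.

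The resolution I propose exploits that strong convergence of $\nabla z_k$ on each $[0,T-\eps]$ already suffices. I would test \eqref{eq:discrete_dual_problems} with $\rho z_k$, where $0\leq\rho\leq1$ is a time cutoff, piecewise constant on the temporal grid, with $\rho\equiv 1$ on $[0,T-\eps]$ and $\rho(T)=0$. Because $\rho(T)=0$, the weighted energy identity no longer sees the uncontrolled term $\norm{z_k(T)}_{\Omk}^2$; the remaining right-hand side involves only differences of $\rho$ paired with $\norm{z_k}_\Omega^2$, which converge by the strong $L^2(L^2)$ convergence of $z_k$, and $\int_0^T\rho(\tilde{b}_k\cdot\nabla z_k,z_k)_\Omega\,\dd t$, which now converges by weak-times-strong convergence to $\int_0^T\rho(\chi,z)_\Omega\,\dd t$. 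Comparing the resulting $\limsup$ bound for $\int_0^T\rho(A_k\nabla z_k,\nabla z_k)$ with the limit equation tested against $\rho z$ (where again the $\rho(T)$ term drops and the advection contribution is the same $\int_0^T\rho(\chi,z)_\Omega$), I obtain $\limsup_k\int_0^T\rho(A_k\nabla z_k,\nabla z_k)\leq\nu\int_0^T\rho\norm{\nabla z}_\Omega^2$, while weak lower semicontinuity with $A_k\geq\nu\mathbb{I}$ gives the matching $\liminf$. Hence the weighted Dirichlet energies converge and, using $\norm{D_k}\lesssim h_k\tends 0$, $\nabla z_k\tends\nabla z$ strongly in $L^2(0,T-\eps;L^2(\Omega))$. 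Splitting the advection term as $\int_0^{T-\eps}+\int_{T-\eps}^T$, the first integral converges to $\int_0^{T-\eps}(\tilde{b}_*\cdot\nabla z,\phi)_\Omega$ by strong-times-weak convergence, whereas the tail is bounded by $L_H\,(\int_{T-\eps}^T\norm{\nabla z_k}_\Omega^2)^{1/2}(\int_{T-\eps}^T\norm{\phi}_\Omega^2)^{1/2}$ uniformly in $k$, hence vanishes as $\eps\tends 0$; this establishes \eqref{eq:dual_problem_limit} with $\tilde{b}_*$, i.e.\ $\chi=\tilde{b}_*\cdot\nabla z$, first for $\phi$ in the dense set $\bigcup_\ell\mathbb{V}_\ell$ and then for all $\phi\in X$. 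The strong convergence $z_k(0)\tends z(0)$ in $\LOm$ follows by isolating $\rho(0)\norm{z_k(0)}_{\Omk}^2$ (with $\rho(0)=1$) in the same weighted identity, since all other terms now converge, so $\norm{z_k(0)}_{\Omk}\tends\norm{z(0)}_\Omega$ and weak convergence upgrades to strong via \eqref{eq:L^2-mass_lumped_bound}. The principal technical burden is the discrete bookkeeping of the weight $\rho$ against the backward reconstruction $\In$ and the mass-lumped inner products.
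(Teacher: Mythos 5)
Your proposal is correct and follows essentially the same route as the paper: a uniform $\Vkm$-bound, extraction of weak/strong limits via Theorems~\ref{thm:weak_convergence} and~\ref{precompact-result}, and then identification of the weak limit of $\tilde{b}_k\cdot\nabla z_k$ by testing with a time-weighted copy of $z_k$ whose weight vanishes at $t=T$, so that the uncontrolled terminal term drops out and a limsup/liminf energy comparison with the limit equation yields strong convergence of $\nabla z_k$ on $[0,T-\eps]$ (and of $z_k(0)$). The only difference is cosmetic: the paper uses the linear weight $(T-t_j)$ in place of your cutoff $\rho$, which avoids the $\eps\to 0$ splitting but is otherwise the same argument.
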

\begin{proof}
The fact that $\norm{m_k(T)}_{\Omega} \lesssim \norm{m_k}_{\Vkp}$, see Lemma~\ref{lem:vkp_infty_bound}, and the fact that $\sup_{k\in\N}\norm{m_k}_{\Vkp}<\infty$ together imply that $\sup_{k\in\N} \norm{z_k}_{\Vkm}<\infty$. Theorem~\ref{thm:weak_convergence} implies that, after passing to a subsequence without change of notation, there exists a $z \in Y$ such that $z_k \rightharpoonup z$ in $X$, $\p_t \In z_k \rightharpoonup \p_t z$ in $L^2(0,T;H^{-1}(\Omega))$, $z_k(T)\rightharpoonup z(T)$ and $z_k(0)\rightharpoonup z(0)$ as $k\tends \infty$. 
 Moreover, Theorem~\ref{precompact-result} implies that $z_k\tends z$ in $L^2(0,T;L^2(\Omega))$.
 Note that $z(T)=m(T)$ since $z_k(T)=m_k(T) \rightharpoonup m(T)$ as $k\tends \infty$ by Lemma~\ref{mk-convergence}.
Since $\{\tilde{b}_k{\cdot}\nabla z_k\}_{k\in\N}$ is uniformly bounded in $L^2(0,T, L^2(\Omega))$, after possibly passing to a further subsequence without change of notation, there exists a $g\in L^2(0,T;L^2(\Omega))$ such that $\tilde{b}_k{\cdot}\nabla z_k \rightharpoonup g$ as $k\tends \infty$. 
After fixing a test function $v\in \mathbb{V}_j$ in~\eqref{eq:discrete_dual_problems} for some fixed $j\leq k$, we use the weak convergence properties of $z_k$ above, along with~\eqref{eq:weak_convergence_mass_lumped} and~ $\norm{A_k-\nu \mathbb{I}}_{L^\infty(\Omega;\R^{\dim\times\dim})}\lesssim h_k$, to pass to the limit $k\tends \infty$ in~\eqref{eq:discrete_dual_problems} to find that $z\in Y$ satisfies
\begin{equation}\label{eq:discrete_dual_convergence_1}
\int_0^T -\langle \p_t z, v\rangle+\nu(\nabla z,\nabla v)_{\Omega}\mathrm{d}t = -\int_0^T (g,v)_\Omega\mathrm{d}t \quad \forall v \in \mathbb{V}_j.
\end{equation}
Since $\bigcup_{j\in\N} \mathbb{V}_j$ is dense in $X$, we see that \eqref{eq:discrete_dual_convergence_1} further holds for all $v\in X$.

In order to show that $g=\tilde{b}_* {\cdot}\nabla z$ so that~\eqref{eq:dual_problem_limit} holds, it is enough to show that 
\begin{equation}\label{eq:discrete_dual_convergence_5}
\lim_{k\tends\infty}\int_0^{T}(T-t) \norm{\nabla(z-z_k)}_\Omega^2\mathrm{d}t =0.
 \end{equation}
Indeed, supposing momentarily that~\eqref{eq:discrete_dual_convergence_5} is known, we conclude first that $z_k\tends z$ in $L^2(0,T-\epsilon,H^1_0(\Omega))$ for all $\epsilon>0$. Weak-times-strong convergence then implies that $\tilde{b}_k {\cdot} \nabla z_k \rightharpoonup \tilde{b}_*{\cdot} \nabla z$ in $L^2(0,T-\epsilon,L^2(\Omega))$ for all $\epsilon\in(0,T)$, so that $g=\tilde{b}_*{\cdot}\nabla z$ on $(0,T-\epsilon)$ for arbitrarily small $\epsilon$, and thus $g=\tilde{b}_*{\cdot}\nabla z$ on $(0,T)$.

In order to prove~\eqref{eq:discrete_dual_convergence_5}, define
\begin{equation}
\begin{aligned}
w_k|_{I_j} \coloneqq (T-t_j) z_k|_{I_j}, && \widetilde{w}_k|_{I_j}\coloneqq (T-t_j)^{\frac{1}{2}} \,z_k|_{I_j} && \forall j\in\{1,\dots,\Nk\}.
\end{aligned}
\end{equation}
Since $z_k$ converges to $z$ strongly in $L^2(0,T;L^2(\Omega))$ and weakly in $X=L^2(0,T,H^1_0(\Omega))$, it is easy to show that $w_k \tends w $ strongly in $L^2(0,T;L^2(\Omega))$ where $w(t)\coloneqq(T-t)z \in X$, and also that $\widetilde{w}_k \rightharpoonup \widetilde{w}$ in $X$ where $\widetilde{w}\coloneqq (T-t)^{\frac{1}{2}}\, w$. 
Note that $\widetilde{w}_k \rightharpoonup \widetilde{w}$ in $X$ and $z_k(0)\rightharpoonup z(0) $ in $L^2(\Omega)$ as $k\tends \infty$ respectively imply that
\begin{equation}\label{eq:discrete_dual_convergence_7}
\begin{aligned}
\norm{\widetilde{w}}_X^2 \leq \liminf_{k\tends \infty} \norm{\widetilde{w}_k}_X^2, &&& \norm{z(0)}^2_\Omega \leq \liminf_{k\tends \infty}\norm{z_k(0)}_\Omega^2.
\end{aligned}
\end{equation}
We also deduce from strong convergence of  $z_k\tends z$ in $L^2(0,T;L^2(\Omega))$ that
\begin{equation}\label{eq:discrete_dual_convergence_2}
\lim_{k\tends \infty} \int_0^T \norm{z_k}_{\Omk}^2\mathrm{d}t = \int_0^T \norm{z}_\Omega^2\mathrm{d}t.
\end{equation}
Indeed~\eqref{eq:discrete_dual_convergence_2} follows from $\norm{z_k}_{\Omk}^2 = \norm{z_k}_{\Omega}^2 + (z_k,z_k-R_kz_k)_{\Omega,k} $ a.e.\ on $(0,T)$ by \eqref{eq:Rk_riesz}, and from $\lvert (z_k,z_k-R_kz_k)_{\Omega,k}\rvert\lesssim h_k \norm{\nabla z_k}_{\Omega}^2$ a.e.\ on $(0,T)$ by~\eqref{eq:L^2-mass_lumped_bound}, \eqref{quasi-interp-approx-property} and the Poincar\'e inequality.
Then, a straightforward calculation gives
\begin{equation}\label{eq:discrete_dual_convergence_3}
 \frac{T}{2} \norm{z_k(0)}_{\Omk}^2 + \sum_{j=1}^{\Nk}\frac{T-t_j}{2}\norm{\jump{z_k}_j}_{\Omk}^2 = \int_0^T - (\p_t \In z_k,w_k)_{\Omk} +\frac{1}{2} \norm{z_k}_{\Omk}^2 \mathrm{d} t.
\end{equation}
Thus, testing~\eqref{eq:discrete_dual_problems} with $w_k\in \Vk$ and re-arranging terms shows that
\begin{equation}
\label{eq:discrete_dual_convergence_4}
\begin{split}
\frac{T}{2}\norm{z_k(0)}_{\Omega}^2+ \nu \norm{\widetilde{w}_k}_{X}^2 
 &\leq \frac{T}{2}\norm{z_k(0)}_{\Omk}^2+\int_0^T (A_k\nabla z_k,\nabla w_k)_{\Omega}\mathrm{d}t
 \\ &\leq \int_0^T - (\p_t \In z_k,w_k)_{\Omk}+(A_k \nabla z_k,\nabla w_k)_{\Omega}  + \frac{1}{2}\norm{z_k}_{\Omk}^2\mathrm{d}t
\\ &=\int_0^T  -(\tilde{b}_k{\cdot}\nabla z_k, w_k)_{\Omega}+\frac{1}{2}\norm{z_k}_{\Omk}^2\mathrm{d}t.
\end{split}
\end{equation}
Note that in the first inequality above we have used~\eqref{eq:L^2-mass_lumped_bound} as well as the inequality $\nu\norm{\nabla \widetilde{w}_k}_{\Omega}^2 \leq (A_k\nabla z_k,\nabla w_k)_{\Omega}$ since $A_k\geq \nu \mathbb{I}$ in the partial ordering of positive semi-definite matrices. 
We then combine the strong convergence $w_k \tends w = (T-t)z$ in $L^2(0,T;L^2(\Omega))$ with weak convergence $\tilde{b}_k{\cdot}\nabla z_k \rightharpoonup g$ in $L^2(0,T;L^2(\Omega))$, and~\eqref{eq:discrete_dual_convergence_2} to pass to the limit in the last line of~\eqref{eq:discrete_dual_convergence_4} to find that
\begin{equation}
\begin{aligned}
\lim_{k\tends \infty} \int_0^T  -(\tilde{b}_k{\cdot}\nabla z_k, w_k)_{\Omega}+\frac{1}{2}\norm{z_k}_{\Omk}^2\mathrm{d}t & = \int_0^T - (g,(T-t)z)_{\Omega} + \frac{1}{2}\norm{z}_{\Omega}^2 \mathrm{d}t
\\ & = \frac{T}{2}\norm{z(0)}_{\Omega}^2+\int_0^T (T-t)\nu\norm{\nabla z}_{\Omega}^2 \mathrm{d}t
\\ & = \frac{T}{2}\norm{z(0)}_{\Omega}^2+\nu\norm{\widetilde{w}}_X^2,
\end{aligned}
\end{equation}
where the second equality is obtained by using the test function $w=(T-t)z$ in~\eqref{eq:discrete_dual_convergence_1}.
This shows that
\begin{equation}\label{eq:discrete_dual_convergence_8}
\limsup_{k\tends \infty} \left[\frac{T}{2}\norm{z_k(0)}_{\Omega}^2+ \nu \norm{\widetilde{w}_k}_X^2  \right] \leq \frac{T}{2}\norm{z(0)}_{\Omega}^2+ \nu   \norm{\widetilde{w}}_X^2.
\end{equation}
We then conclude from \eqref{eq:discrete_dual_convergence_7} and \eqref{eq:discrete_dual_convergence_8} that $\norm{z_k(0)}_\Omega\tends \norm{z(0)}_\Omega$ and $\norm{\widetilde{w}}_X\tends \norm{\widetilde{w}}_X$ as $k\tends \infty$, which combined with the weak convergence properties above imply that $\norm{z(0)-z_k(0)}_\Omega\tends 0$ and $\norm{\widetilde{w}-\widetilde{w}_k}_X\tends 0$ as $k\tends \infty$.
The strong convergence $\widetilde{w}_k\tends \widetilde{w}$ in $X$ and the triangle inequality imply that
\[
\begin{aligned}
\lim_{k\tends \infty}\int_0^T(T-t)\norm{\nabla(z-z_k)}_\Omega^2\mathrm{d}t & \lesssim \lim_{k\tends \infty}\left[\norm{\widetilde{w}-\widetilde{w}_k}^2_X + \tau_k \norm{z_k}_{X}^2\right] =0,
\end{aligned}
\]
where we have used the fact that $\sup_{k\in\N}\norm{z_k}_X\lesssim \sup_{k\in\N}\norm{z_k}_{\Vkm}<\infty$. This completes the proof of~\eqref{eq:discrete_dual_convergence_5} and thus of the lemma.
\end{proof}
}

We can now prove the strong $L^2$-convergence of the density function approximations at terminal time.
\begin{theorem}[Terminal-time $L^2$-convergence of discrete KFP approximations]\label{main-thm-2}
After possibly passing to a subsequence without change of notation, we have $m_k(T)\tends m(T)$ in $L^2(\Omega)$ as $k\tends \infty$, where $m$~solves \eqref{mfg-pdi-sys-time-classical-intro-thm-discrete}.
\end{theorem}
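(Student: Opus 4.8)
The plan is to prove convergence of the terminal-time norms, $\norm{m_k(T)}_\Omega \tends \norm{m(T)}_\Omega$, and then to upgrade the weak convergence $m_k(T)\rightharpoonup m(T)$ supplied by Lemma~\ref{mk-convergence} to strong convergence in $L^2(\Omega)$ via the Hilbert-space fact that weak convergence together with convergence of norms implies strong convergence. The convergence of norms will come from a duality argument exploiting the discrete dual problems~\eqref{eq:discrete_dual_problems} and their continuous limit~\eqref{eq:dual_problem_limit}, whose convergence is already established in Lemma~\ref{lem:discrete_dual_convergence}. Throughout I work along the common subsequence fixed in Lemmas~\ref{mk-convergence} and~\ref{lem:discrete_dual_convergence}, recalling that $z_k(T)=m_k(T)$ and $z(T)=m(T)$.

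First I would test the discrete KFP equation~\eqref{weakform2-space-time-discrete} with $\phi = m_k|_{(0,T)}\in\Vk$ replaced by the dual solution, i.e.\ take $\phi = z_k\in\Vk$ in~\eqref{weakform2-space-time-discrete} and take $v = m_k\in\Vk$ in~\eqref{eq:discrete_dual_problems}, and then subtract the two identities. The diffusion terms cancel because $A_k$ is symmetric, and the two advective terms cancel because $(m_k\tilde{b}_k,\nabla z_k)_\Omega = (\tilde{b}_k\cdot\nabla z_k, m_k)_\Omega$ pointwise in time; this is exactly the self-adjointness of the advective coupling that the stabilization was designed to preserve. Applying the discrete integration-by-parts formula~\eqref{eq:discrete_ibp} to the two remaining time-derivative terms, and using the Riesz-map identity~\eqref{eq:Rk_riesz} together with $z_k(T)=m_k(T)$ and $m_k(0)=R_k m_0$, this yields the key identity
\begin{equation}
\norm{m_k(T)}_{\Omk}^2 = (m_0,z_k(0))_\Omega + \int_0^T \langle G, z_k\rangle \mathrm{d}t.
\end{equation}

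Next I would pass to the limit $k\tends\infty$ on the right-hand side. By Lemma~\ref{lem:discrete_dual_convergence}, $z_k(0)\tends z(0)$ strongly in $L^2(\Omega)$, so $(m_0,z_k(0))_\Omega \tends (m_0,z(0))_\Omega$, and $z_k\rightharpoonup z$ in $X$, so $\int_0^T\langle G,z_k\rangle\mathrm{d}t \tends \int_0^T\langle G,z\rangle\mathrm{d}t$ since $G\in L^2(0,T;H^{-1}(\Omega))$. I would then run the analogous continuous duality argument: testing the limiting KFP equation~\eqref{mfg-pdi-sys-time-classical-intro-thm-discrete} with $z$ and the limiting dual equation~\eqref{eq:dual_problem_limit} with $m$, subtracting (again the diffusion and advective terms cancel), and using the standard integration-by-parts formula in $Y$ (valid since $m,z\in Y\hookrightarrow C([0,T];L^2(\Omega))$) together with $z(T)=m(T)$ and $m(0)=m_0$, gives $\norm{m(T)}_\Omega^2 = (m_0,z(0))_\Omega + \int_0^T\langle G,z\rangle\mathrm{d}t$. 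Comparing the two identities yields $\norm{m_k(T)}_{\Omk}^2 \tends \norm{m(T)}_\Omega^2$.

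Finally I would convert the mass-lumped norm into the true $L^2$ norm. The one-sided inequality $\norm{v}_\Omega \leq \norm{v}_{\Omk}$ from~\eqref{eq:L^2-mass_lumped_bound}, which carries no generic constant, gives $\limsup_{k\tends\infty}\norm{m_k(T)}_\Omega^2 \leq \lim_{k\tends\infty}\norm{m_k(T)}_{\Omk}^2 = \norm{m(T)}_\Omega^2$, while weak lower semicontinuity of the norm under $m_k(T)\rightharpoonup m(T)$ gives the reverse bound on the $\liminf$; hence $\norm{m_k(T)}_\Omega \tends \norm{m(T)}_\Omega$, and the weak-convergence-plus-norm-convergence criterion in $L^2(\Omega)$ delivers $m_k(T)\tends m(T)$ strongly. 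I expect the only delicate point to be the careful bookkeeping between the mass-lumped inner product $(\cdot,\cdot)_{\Omk}$ and the true $L^2$ inner product at the terminal time; the genuinely hard analytic work, namely the strong convergence of the dual solutions $z_k$ and, crucially, of their initial traces $z_k(0)$, has already been carried out in Lemma~\ref{lem:discrete_dual_convergence}, so the remaining argument reduces to an exact algebraic cancellation followed by a limit passage.
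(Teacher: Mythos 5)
Your proposal is correct and follows essentially the same route as the paper: the same discrete duality argument (testing \eqref{weakform2-space-time-discrete} with $z_k$ and \eqref{eq:discrete_dual_problems} with $m_k$, combined via \eqref{eq:discrete_ibp}) yielding $\norm{m_k(T)}_{\Omk}^2 = (m_0,z_k(0))_\Omega + \int_0^T\langle G,z_k\rangle\,\mathrm{d}t$, the same continuous counterpart for $\norm{m(T)}_\Omega^2$, and the same limit passage using Lemma~\ref{lem:discrete_dual_convergence} followed by the one-sided mass-lumping inequality \eqref{eq:L^2-mass_lumped_bound} and weak convergence to conclude norm convergence. No gaps.
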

\begin{proof}

Since $m_k(T)\rightharpoonup m(T)$ in $L^2(\Omega)$ by Lemma~\ref{mk-convergence}, it is enough to show that $\limsup_{k\tends\infty}\norm{m_k(T)}_{\Omega}\leq \norm{m(T)}_{\Omega}$.
The discrete integration-by-parts identity~\eqref{eq:discrete_ibp} implies that
\begin{multline*}
\norm{m_k(T)}_{\Omk}^2 = (m_k(T),z_k(T))_{\Omk}\\ = (m_{k}(0),z_k(0))_{\Omk}+\int_0^T(\p_t \Ip m_k,z_k)_{\Omk}+(m_k,\p_t \In z_k)_{\Omk}\mathrm{d}t.
\end{multline*}
Then, since $z_k \in \Vkm$ and $m_k\in \Vkp$ naturally embed into $\Vk$, we use $z_k$ as a test function in~\eqref{weakform2-space-time-discrete} with $m_k$ as a test function in~\eqref{eq:discrete_dual_problems} to obtain
\begin{equation}\label{eq:terminal_time_1}
\norm{m_k(T)}_{\Omk}^2 = (m_k(0),z_k(0))_{\Omk} + \int_0^T \langle G, z_k \rangle \mathrm{d}t.
\end{equation}
Similarly, since $Y$ embeds continuously in $X$, we use $z$ as a test function in~\eqref{mfg-pdi-sys-time-classical-intro-thm-discrete} and $m$ as test function in \eqref{eq:dual_problem_limit} to obtain
\begin{equation}\label{eq:terminal_time_2}
\norm{m(T)}_\Omega^2 = (m_0,z(0))_\Omega + \int_0^T\langle G, z \rangle \mathrm{d}t.
\end{equation} 
The convergence $z_k(0)\tends z(0)$ in $L^2(\Omega)$ and $z_k\rightharpoonup z$ in $X=L^2(0,T;H^1_0(\Omega))$ from Lemma~\ref{lem:discrete_dual_convergence}, together with the fact that $m_k(0) = R_km_0$ in $V_k$, then implies that 
\begin{equation}
 (m_k(0),z_k(0))_{\Omk}=(m_{0},z_k(0))_{\Omega} \tends (m_0,z(0))_{\Omega}, \quad \int_0^T \langle G, z_k \rangle \mathrm{d}t\tends \int_0^T\langle G, z \rangle \mathrm{d}t,
\end{equation}
as $k\tends\infty$.
Therefore, passing to the limit in \eqref{eq:terminal_time_1}, we  see that
\begin{equation}
\limsup_{k\tends \infty}\norm{m_k(T)}_{\Omega}^2\leq \limsup_{k\tends \infty}\norm{m_k(T)}_{\Omk}^2 \leq \norm{m(T)}_{\Omega}^2.
\end{equation}
where we recall that the first inequality above follows from~\eqref{eq:L^2-mass_lumped_bound}. This implies that $m_k(T)$ converges in norm to $m(T)$ in $L^2(\Omega)$.
\end{proof}
	
\subsection{HJB equation}
With the terminal time compactness result Theorem~\ref{main-thm-2} established for the density approximations, we can now prove convergence of the value function approximations to a weak solution to the HJB equation.
\begin{lemma}[Convergence to the HJB Equation]\label{uk-convergence}
	After passing to a subsequence without change of notation, we have $u_{k}\to u$ in $X$,  $u_{k}\to u$ in $L^p(0,T;\LOm)$ for any $1\leq p<\infty$, and $u_{k}(0)\to u(0) $ in $\LOm$ as $k\to\infty$ where $u\in Y$ solves $u(T)=S[m(T)]$ in $L^2(\Omega)$ and
	\begin{equation}\label{HJB-final}
				\int_0^T - \langle\partial_t u,\psi \rangle+\nu(\nabla u,\nabla  \psi)_{\Omega}+( H[\nabla u],\psi)_{\Omega} \mathrm{d}t
			= \int_0^T\langle {F}[m],\psi\rangle \mathrm{d}t	\quad\forall \psi \in X,
		\end{equation}
		 where $m\in Y$ satisfies~\eqref{mfg-pdi-sys-time-classical-intro-thm-discrete}. 
\end{lemma}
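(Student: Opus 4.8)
The plan is to extract weak limits using the compactness results of Section~\ref{sec7}, to pass to the limit in the discrete HJB equation so as to obtain a limit equation in which the Hamiltonian term is replaced by a weak limit $\chi$, and then to upgrade weak convergence to strong convergence in $X$ by an energy argument, which is precisely what allows $\chi$ to be identified with $H[\nabla u]$. First I would note that the stability bound~\eqref{eq:discrete_mfg_existence} gives $\sup_{k\in\N}\norm{u_k}_{\Vkm}<\infty$, so the $\Vkm$ analogue of Theorem~\ref{thm:weak_convergence} yields, up to a subsequence that I do not relabel, a limit $u\in Y$ with $u_k\rightharpoonup u$ in $X$, $\p_t\In u_k\rightharpoonup\p_t u$ in $L^2(0,T;H^{-1}(\Omega))$, and $u_k(0)\rightharpoonup u(0)$, $u_k(T)\rightharpoonup u(T)$ weakly in $\LOm$, while Theorem~\ref{precompact-result} gives $u_k\to u$ in $L^p(0,T;\LOm)$ for every $p\in[1,\infty)$. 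Since~\eqref{eq:Lipschitz_H} and the boundedness of $H(\cdot,\cdot,0)$ imply the linear growth bound $\abs{H[\nabla u_k]}\leq C+L_H\abs{\nabla u_k}$, the sequence $\{H[\nabla u_k]\}_{k\in\N}$ is bounded in $L^2(\QT)$, so a further subsequence satisfies $H[\nabla u_k]\rightharpoonup\chi$ in $L^2(\QT)$ for some $\chi\in L^2(\QT)$. To identify the terminal condition I would observe that $u_k(T)=R_kS[m_k(T)]$ converges strongly in $\LOm$ to $S[m(T)]$, using the strong convergence $m_k(T)\to m(T)$ from Theorem~\ref{main-thm-2}, the continuity of $S$, and the $L^2$-stability and consistency of $R_k$; comparing with the weak limit $u_k(T)\rightharpoonup u(T)$ forces $u(T)=S[m(T)]$. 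Taking a fixed $\psi\in\mathbb{V}_\ell$ in~\eqref{eq:strong_in_time_discrete_HJB} and letting $k\to\infty$ — treating the time term by the $\Vkm$ version of~\eqref{eq:weak_convergence_mass_lumped}, the diffusion term via $\norm{A_k-\nu\mathbb{I}}_{L^\infty(\Omega;\R^{\dim\times\dim})}\lesssim h_k$ together with $\nabla u_k\rightharpoonup\nabla u$, the coupling term via $F[m_k]\to F[m]$ in $L^2(0,T;H^{-1}(\Omega))$ (from $m_k\to m$ in $L^2(\QT)$ in Lemma~\ref{mk-convergence} and continuity of $F$), and the Hamiltonian term via $H[\nabla u_k]\rightharpoonup\chi$ — and then invoking the density of $\bigcup_\ell\mathbb{V}_\ell$ in $X$, I obtain that $u$ satisfies~\eqref{HJB-final} with $\chi$ in place of $H[\nabla u]$.

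The main obstacle is that this weak convergence is insufficient to deduce $\chi=H[\nabla u]$, because $H$ is nonlinear; the crux is therefore to establish strong convergence $u_k\to u$ in $X$, from which Lipschitz continuity immediately gives the identification. I would prove strong convergence by an energy comparison. Testing~\eqref{eq:strong_in_time_discrete_HJB}, extended to test functions in $\Vk$, with $u_k$ itself, and evaluating the time term by the discrete energy identity $\int_0^T(\p_t\In u_k,u_k)_{\Omk}\mathrm{d}t=\tfrac12\norm{u_k(T)}_{\Omk}^2-\tfrac12\norm{u_k(0)}_{\Omk}^2-\tfrac12\sum_{n=1}^{\Nk}\norm{\jump{u_k}_n}_{\Omk}^2$ (the backward-in-time analogue of the identity used in the proof of Lemma~\ref{lem:vkp_infty_bound}), I find that $\int_0^T(A_k\nabla u_k,\nabla u_k)_\Omega\mathrm{d}t$ equals $\int_0^T\langle F[m_k],u_k\rangle-(H[\nabla u_k],u_k)_\Omega\,\mathrm{d}t+\tfrac12\norm{u_k(T)}_{\Omk}^2-\tfrac12\norm{u_k(0)}_{\Omk}^2-\tfrac12\sum_n\norm{\jump{u_k}_n}_{\Omk}^2$.

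Now I take $\limsup_{k\to\infty}$ of this identity. The coupling term converges to $\int_0^T\langle F[m],u\rangle\mathrm{d}t$ by weak-times-strong convergence, and the Hamiltonian term converges to $\int_0^T(\chi,u)_\Omega\mathrm{d}t$ since $\{H[\nabla u_k]\}$ is $L^2$-bounded and $u_k\to u$ strongly in $L^2(\QT)$. The terminal term converges to $\tfrac12\norm{u(T)}_\Omega^2$: by the Riesz identity~\eqref{eq:Rk_riesz}, $\norm{u_k(T)}_{\Omk}^2=(R_kS[m_k(T)],S[m_k(T)])_\Omega$, and both $u_k(T)=R_kS[m_k(T)]$ and $S[m_k(T)]$ converge strongly in $\LOm$ to $S[m(T)]=u(T)$. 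The jump term is nonpositive and is dropped, and the initial term is controlled using $\norm{u_k(0)}_\Omega\leq\norm{u_k(0)}_{\Omk}$ (the constant-free inequality in~\eqref{eq:L^2-mass_lumped_bound}) and weak lower semicontinuity, giving $\limsup_k\bigl(-\tfrac12\norm{u_k(0)}_{\Omk}^2\bigr)\leq-\tfrac12\norm{u(0)}_\Omega^2$. On the other hand, testing the limit equation (the version of~\eqref{HJB-final} with $\chi$) with $\psi=u\in Y$ and integrating the time term by parts yields $\nu\int_0^T\norm{\nabla u}_\Omega^2\mathrm{d}t=\int_0^T\langle F[m],u\rangle-(\chi,u)_\Omega\,\mathrm{d}t+\tfrac12\norm{u(T)}_\Omega^2-\tfrac12\norm{u(0)}_\Omega^2$, so that $\limsup_k\int_0^T(A_k\nabla u_k,\nabla u_k)_\Omega\mathrm{d}t\leq\nu\int_0^T\norm{\nabla u}_\Omega^2\mathrm{d}t$.

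Since $A_k\geq\nu\mathbb{I}$ and $\nabla u_k\rightharpoonup\nabla u$, weak lower semicontinuity gives $\nu\int_0^T\norm{\nabla u}_\Omega^2\leq\liminf_k\nu\int_0^T\norm{\nabla u_k}_\Omega^2\leq\liminf_k\int_0^T(A_k\nabla u_k,\nabla u_k)_\Omega\,\mathrm{d}t$, so all these quantities share the common limit $\nu\int_0^T\norm{\nabla u}_\Omega^2$; in particular $\norm{\nabla u_k}_{\QT}\to\norm{\nabla u}_{\QT}$, which with the weak convergence yields $u_k\to u$ strongly in $X$. Then~\eqref{eq:Lipschitz_H} gives $\norm{H[\nabla u_k]-H[\nabla u]}_{\QT}\leq L_H\norm{\nabla(u_k-u)}_{\QT}\to0$, so $\chi=H[\nabla u]$ and the limit equation is exactly~\eqref{HJB-final}. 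Finally, tracing the now-established equality of limits back through the energy identity forces $\norm{u_k(0)}_{\Omk}\to\norm{u(0)}_\Omega$ and $\sum_n\norm{\jump{u_k}_n}_{\Omk}^2\to0$; combining $\norm{u_k(0)}_\Omega\leq\norm{u_k(0)}_{\Omk}$ with weak lower semicontinuity upgrades $u_k(0)\rightharpoonup u(0)$ to strong convergence in $\LOm$. The strong $L^p(0,T;\LOm)$ convergence of $u_k$ has already been obtained from Theorem~\ref{precompact-result}, so this completes the argument. I expect the delicate point to be the correct handling of the mass-lumped boundary terms: using the Riesz identity~\eqref{eq:Rk_riesz} for the terminal term (where the naive norm equivalence would lose a constant), while only the one-sided inequality $\norm{\cdot}_\Omega\le\norm{\cdot}_{\Omk}$ is needed, and available, for the initial term.
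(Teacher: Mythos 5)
Your proposal is correct and follows essentially the same route as the paper's proof: extract weak limits via the discrete compactness results, pass to the limit in the discrete HJB equation with a weak limit $\chi$ of $H[\nabla u_k]$, and then upgrade to strong convergence in $X$ by testing with $u_k$, using the discrete energy identity, the one-sided mass-lumping inequality $\norm{\cdot}_\Omega\leq\norm{\cdot}_{\Omk}$, the Riesz identity~\eqref{eq:Rk_riesz} for the terminal term, and comparison with the limit equation tested with $u$. The only difference is organizational: the paper bounds $\tfrac12\norm{u_k(0)}_\Omega^2+\nu\norm{u_k}_X^2$ in a single limsup inequality, whereas you first establish strong $X$-convergence and then recover $u_k(0)\to u(0)$ by tracing the equality of limits back through the energy identity — both are valid.
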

{Note that an integration-by-parts argument on the time derivative in~\eqref{HJB-final} shows that $u$ equivalently solves~\eqref{weakform1-space-time} for all $\psi\in Y_0$.}

\begin{proof}
{
Since $\sup_{k\in\N}\norm{u_k}_{\Vkm}<\infty$ by~\eqref{eq:discrete_mfg_existence}, Theorems~\ref{thm:weak_convergence} and~\ref{precompact-result} imply that, after passing to a subsequence without change of notation, there exists a $u\in Y$ such that $\p_t \In u_k \rightharpoonup \p_t u$ in $L^2(0,T;H^{-1}(\Omega))$, $u_k\rightharpoonup u$ in $X$, $u_k(0)\rightharpoonup u(0)$ in $L^2(\Omega)$, $u_k(T)\rightharpoonup u(T)$ in $L^2(\Omega)$ and $u_k\tends u$ in $L^p(0,T;L^2(\Omega))$ for all $p\in[1,\infty)$. Furthermore, since $m_k(T)\tends m(T)$ by Theorem~\ref{main-thm-2}, the continuity of $S$ on $L^2(\Omega)$ and the $L^2$-stability of $R_k$ imply that $u_k(T)=R_k S[m_k(T)]\tends S[m(T)]$ in norm in $L^2(\Omega)$, hence $u(T)=S[m(T)]$. 
It follows from~\eqref{eq:Lipschitz_H} that $\{H[\nabla u_k]\}_{k\in\N}$ is uniformly bounded in $L^2(0,T;L^2(\Omega))$ and thus, after passing to a further subsequence without change of notation, there exists a $H_* \in L^2(0,T;L^2(\Omega))$ such that $H[\nabla u_k]\rightharpoonup H_*$ in $L^2(0,T;L^2(\Omega))$.
Furthermore, the continuity of $F$ on $L^2(0,T;L^2(\Omega))$ and the strong convergence $m_k\tends m$ in $L^2(0,T;L^2(\Omega))$ from Lemma~\ref{mk-convergence} imply that $F[m_k]\tends F[m]$ in $L^2(0,T;H^{-1}(\Omega))$. We may then use density of $\bigcup_{\ell\in\N} \mathbb{V}_\ell$ in $X$ and \eqref{eq:weak_convergence_mass_lumped} to pass to the limit as $k\tends \infty$ in~\eqref{eq:strong_in_time_discrete_HJB} to find} that $u$ solves
\begin{equation}\label{almost-HJB-equation}
\begin{aligned}
\int_0^{T}-\left\langle \partial_tu,{\psi}\right\rangle + \nu(\nabla {u},\nabla {\psi})_{\Omega} + (H_*, {\psi})_{\Omega}\mathrm{d}t=\int_0^{T}\langle F[{m}],{\psi}\rangle\mathrm{d}t  &&& \psi\in X.
\end{aligned}
\end{equation}
{We now claim that $u_k\tends u$ in norm in $X$ as $k\tends \infty$ which will imply $H_*=H[\nabla u]$. First, note that
\begin{equation}
\begin{split}
\frac{1}{2}\norm{u_k(0)}_\Omega^2 &\leq \frac{1}{2}\norm{u_k(0)}_{\Omk}^2+\frac{1}{2}\sum_{n=1}^{N_k}\norm{\jump{u_k}_n}_{\Omk}^2
 \\ =  &\int_0^T- (\p_t \In u_k,u_k)_{\Omk} \mathrm{d}t + \frac{1}{2}\norm{u_k(T)}_{\Omk}^2.
\end{split}
\end{equation}
where the first inequality above follows from~\eqref{eq:L^2-mass_lumped_bound}.
Observe also that that $\nu\norm{u_k}_X^2=\int_0^T\nu\norm{\nabla u_k}_{\Omega}^2\mathrm{d}t \leq \int_0^T(A_k\nabla u_k,\nabla u_k)_{\Omega}\mathrm{d}t$ since $A_k\geq \nu \mathbb{I}$ a.e.\ in $\Omega$ in the sense of semi-definite matrices.
Next, note that $\norm{u_k(T)}_{\Omk}^2=\norm{R_kS[m_k(T)]}_{\Omk}^2 \tends \norm{S[m(T)]}_\Omega^2$ as $k\tends \infty$ since $\norm{R_kS[m_k(T)]}_{\Omk}^2=(S[m_k(T)],R_k S[m_k(T)])_\Omega$ and both $R_kS[m_k(T)]$ and $S[m_k(T)]$ converge to $S[m(T)]$ in $L^2(\Omega)$ as $k\tends \infty$.
Furthermore, weak convergence $H[\nabla u_k]\rightharpoonup H_*$ and strong convergence $u_k\tends u$ in $L^2(0,T;L^2(\Omega))$ imply that $\int_0^T (H[\nabla u_k],u_k)_\Omega \mathrm{d}t \tends \int_0^T (H_*,u)_\Omega \mathrm{d}t$. Since $F[m_k]\tends F[m]$ in $L^2(0,T;H^{-1}(\Omega))$ we also have $\int_0^T\langle F[m_k],u_k\rangle\mathrm{d}t \tends \int_0^T \langle F[m],u\rangle\mathrm{d}t$ as $k\tends \infty$.
Therefore, testing~\eqref{eq:strong_in_time_discrete_HJB} with $u_k$ gives
\begin{equation}\label{eq:uk_convergence_1}
\begin{split}
\limsup_{k\tends \infty} &\left[\frac{1}{2}\norm{u_k(0)}_\Omega^2+\nu\norm{u_k}_X^2 \right]
\\ & \leq \lim_{k\tends\infty}\left[  \int_0^T- (\p_t \In u_k,u_k)_{\Omk} +(A_k\nabla u_k,\nabla u_k)_{\Omega} \mathrm{d}t + \frac{1}{2}\norm{u_k(T)}_{\Omk}^2 \right]
\\ &= \lim_{k\tends \infty}\left[ \int_0^T \langle F[m_k],u_k\rangle - (H[\nabla u_k],u_k)_{\Omega}\mathrm{d}t + \frac{1}{2}\norm{R_kS[m_k(T)]}_{\Omk}^2\right]
\\ &= \int_0^T \langle F[m],u\rangle -(H_*,u)_{\Omega}\mathrm{d}t+\frac{1}{2}\norm{S[m(T)]}_\Omega^2
\\ &= \frac{1}{2}\norm{u(0)}_{\Omega}^2+\nu\norm{u}_X^2,
\end{split}
\end{equation}
where the last line follows from testing~\eqref{almost-HJB-equation} with $u$.
In conjunction with the inequalities $\norm{u(0)}_\Omega\leq \liminf_{k\tends \infty}\norm{u_k(0)}_\Omega$ and $\norm{u}_X\leq\liminf_{k\tends\infty}\norm{u_k}_X$ owing to weak convergence, we see that \eqref{eq:uk_convergence_1} implies that $u_k(0)\tends u(0)$ in norm in $L^2(\Omega)$ and that $u_k\tends u$ in norm in $X$ as $k\tends \infty$. In turn, this implies that $H_*=H[\nabla u]$ since $H[\nabla u_k]\tends H[\nabla u]$ in $L^2(0,T;L^2(\Omega))$ by Lipschitz continuity of $H$, and thus $u$ solves~\eqref{HJB-final}.
}
\end{proof}

\subsection{Proof of Theorems~\ref{conv-main-thm} and~\ref{existence-space-time}}
	To begin, we observe that since $\tilde{b}_{k}\in {\Dp}H[{u}_{k}]$ for all $k\in\mathbb{N}$, $\{\tilde{b}_{k}\}_{k\in\mathbb{N}}$ is uniformly bounded in $L^{\infty}(Q_T;\R^\dim)$ (see Lemma \ref{Prop1-time}).  We may then pass to a subsequence, without change of notation such that $\{\tilde{b}_{k}\}_{k\in\mathbb{N}}$ converges weakly in $L^2(Q_T;\R^\dim)$ to some $\tilde{b}_*\in L^{\infty}(Q_T;\R^\dim)$.
	{}
	We first obtain from Lemma~\ref{mk-convergence} {and Theorem~\ref{main-thm-2} } that $m_{k}\to m$ in $L^p(0,T;\LOm)$ for any $1\leq p<\infty$, and  $m_{k}(T)\to m(T) $ in $\LOm$ as $k\to\infty$ where $m\in Y$ uniquely satisfies the KFP equation
	\eqref{mfg-pdi-sys-time-classical-intro-thm-discrete} and the initial condition $m(0)=m_0$.
	Consequently, Lemma \ref{uk-convergence} implies that $u_{k}\to u$ in $X$,  $u_{k}\to u$ in $L^p(0,T;\LOm)$ for any $1\leq p<\infty$, and $u_{k}(0)\to u(0) $ in $\LOm$ as $k\to\infty$ where $u\in Y$ uniquely satisfies the HJB equation \eqref{HJB-final}. 
	{S}ince $\tilde{b}_{k}\in {\Dp}H[{u}_{k}]$ for all $k\in\mathbb{N}$, while $\{\tilde{b}_{k}\}_{k\in\mathbb{N}}$ converges weakly in $L^2(Q_T;\R^\dim)$ to  $\tilde{b}_*$ and ${u}_{k}\to u$ in $X$, we  can apply Lemma \ref{closure} to obtain $\tilde{b}_*\in {\Dp}H[u]$. This shows that $(u,m)\in Y$ is a weak solution of~\eqref{weakdef-space-time}, and thus proves Theorem~\ref{existence-space-time}.
	
	In {summary}, we have shown that {a subsequence of the numerical} approximations converges to a solution $(u,m)$ of \eqref{weakdef-space-time} in the sense of~\eqref{eq:eulerFEMconv}. 
	{Then,} uniqueness of the solution, {c.f.\ } Theorem \ref{uniqueness-space-time}, implies that the {whole sequence $\{(u_k,m_k)\}\}_{k\in\N}$} converges to $(u,m)$ in the sense of~\eqref{eq:eulerFEMconv}. This completes the proof of Theorem~\ref{conv-main-thm}.
	 \hfill\proofbox

\subsection{Proof of Corollary \ref{convergence-cor}}
Under the hypotheses of Corollary \ref{convergence-cor}, we now {prove} strong convergence in $L^2(0,T;H_0^1(\Omega))$ of the density approximations.

	We know by hypothesis that $\{\tilde{b}_{k}\}_{k\in\mathbb{N}}$ is pre-compact in $L^1(Q_T;\R^\dim)$. Moreover, as a consequence of Lemma \ref{Prop1-time} and the definition of the scheme \eqref{weakform-space-time-discrete}, the sequence is uniformly bounded in $L^{\infty}(Q_T;\R^\dim)$. Since $\text{meas}(\Omega)<\infty$ and $T<\infty$, it can then be shown that $\{\tilde{b}_{k}\}_{k\in\mathbb{N}}$ is pre-compact in $L^q(Q_T;\R^\dim)$ for any {$q \in [1,\infty)$}. Since the density $m\in Y$, we have by the embedding $Y\to C([0,T];\LOm)$ that $m\in X\cap C([0,T];\LOm)$. It then follows from the parabolic embedding \cite[Equation (6.39), p. 466]{ladyvzenskaja1968linear} that $m\in L^{2+\frac{4}{d}}(Q_T)$. Passing to a suitable subsequence, without change of notation, we get that $\tilde{b}_{k}\to \tilde{b}_*$ in $L^{d+2}(Q_T;\R^\dim)$ as $k\to\infty$ for some $\tilde{b}_*\in L^{d+2}(Q_T;\R^\dim)$. It then follows that $m_{k}\tilde{b}_{k}\to m\tilde{b}_*$ in $L^2(Q_T;\R^\dim)$ as $k\to\infty$.
	
	We now show that $m_{k}\to m$ in $X$ as $k\to\infty$, along a subsequence. For given $k\in\mathbb{N}$, testing the equation~\eqref{weakform2-space-time-discrete} with $\phi=m_{k}\in\timeVkforward$ leads to
	\begin{multline}\label{lemma-1-zk-energy*}
		\limsup_{k\tends\infty}\left[ {\frac{1}{2}\norm{m_{k}(T)}_{\Omega}^2} +{\nu\norm{m_k}_X^2}\right] 
		\\ \leq \lim_{k\tends\infty}\left[ \int_0^T\langle G,m_{k}\rangle - (m_{k}\tilde{b}_{k},\nabla m_{k})_{\Omega}\mathrm{d}t
		+\frac{1}{2}\|{R_km_{0}}\|_{\Omk}^2\right]
		\\ = \int_0^T\langle G,m \rangle - (m\tilde{b}_{*},\nabla m)_{\Omega}\mathrm{d}t + \frac{1}{2}\norm{m_0}_\Omega^2
		 \\ {= \frac{1}{2}\norm{m(T)}_{L^2(\Omega)}^2+\nu\norm{m}_X^2,}
	\end{multline} 
		{where the last line above is obtained by testing~\eqref{weakform2-space-time} with $m$.}
	Since $m_{k}(T)\to m(T)$ in $\LOm$ as $k\to\infty$,	it follows from \eqref{lemma-1-zk-energy*} that
	$\limsup_{k\to\infty}{\norm{m_k}_X}\leq {\norm{m}_X}$.
	Together with the fact that $m_{k}\rightharpoonup m$ in $X$ as $k\to\infty$, we obtain that $ m_{k}\to m$ in $X$ as $k\to\infty$. This completes the proof. \hfill\proofbox

\begin{figure}[tbhp]
	\centering
	\begin{tabular}{c c} 
		\begin{subfigure}[b]{0.45\textwidth}
			\begin{adjustbox}{width=0.95\linewidth}
				\begin{tikzpicture}
					\begin{loglogaxis}[
						title={Value function $L^2(H_0^1)$-norm},
						xlabel={mesh size $h_k$},
						ylabel={relative error},
						xmax=1,
						ymax=1,
						legend pos=north west,
						ymajorgrids=true,
						grid style=dashed,
						]
						
						\addplot[
						color=blue,
						mark=square,]
						coordinates {
							(0.7071068,0.7006612)
							(0.3535534,0.4321884)
							(0.1767767,0.2347566)
							(0.08838835,0.1222251)
							(0.04419417,0.06239205)
							(0.02209709,0.03153363)
							(0.01104854,0.01585455)
							(0.005524272,0.007949726)

						};
						
						\logLogSlopeTriangle{0.4}{0.25}{0.08}{1}{black};
					\end{loglogaxis}  
				\end{tikzpicture}
			\end{adjustbox}
		\end{subfigure}
		&
		\begin{subfigure}[b]{0.45\textwidth}
			\begin{adjustbox}{width=0.95\linewidth} 
				\begin{tikzpicture}
					\begin{loglogaxis}[
						title={Transport vector $L^2(L^2)$-norm},
						xlabel={mesh size $h_k$},
						ylabel={relative error},
						xmax=1,
						ymax= 1,
						legend pos=north west,
						ymajorgrids=true,
						grid style=dashed,
						]
						
						\addplot[
						color=blue,
						mark=square,]
						coordinates {
							(0.7071068,0.6149608)
							(0.3535534,0.4093697)
							(0.1767767,0.2559373)
							(0.08838835,0.1331117)
							(0.04419417,0.07477188)
							(0.02209709,0.04198062)
							(0.01104854,0.02243097)
							(0.005524272,0.01190763)
							
						};

						\logLogSlopeTriangle{0.4}{0.25}{0.08}{1}{black};
					\end{loglogaxis}  
				\end{tikzpicture}
			\end{adjustbox}
		\end{subfigure}
		\\
		\begin{subfigure}[b]{0.45\textwidth}
			\begin{adjustbox}{width=0.95\linewidth} 
				\begin{tikzpicture}
					\begin{loglogaxis}[
						title={Density function $L^2(L^2)$-norm},
						xlabel={mesh size $h_k$},
						ylabel={relative error},
						xmax=1,
						ymax=1,
						legend pos=north west,
						ymajorgrids=true,
						grid style=dashed,
						]
						
						\addplot[
						color=blue,
						mark=square,]
						coordinates {
							(0.7071068,0.7018799)
							(0.3535534,0.378799)
							(0.1767767,0.1899179)
							(0.08838835,0.09433648)
							(0.04419417,0.04696553)
							(0.02209709,0.0234262)
							(0.01104854,0.01169438)
							(0.005524272,0.005845161)
							
						};

						\logLogSlopeTriangle{0.4}{0.25}{0.08}{1}{black};
					\end{loglogaxis}  
				\end{tikzpicture}
			\end{adjustbox}
		\end{subfigure} 
		&
		\begin{subfigure}[b]{0.45\textwidth}
			\begin{adjustbox}{width=0.95\linewidth} 
				\begin{tikzpicture}
					\begin{loglogaxis}[
						title={Density function $L^2(H_0^1)$-norm},
						xlabel={mesh size $h_k$},
						ylabel={relative error},
						xmax=1,
						ymax=1,
						legend pos=north west,
						ymajorgrids=true,
						grid style=dashed,
						]
						
						\addplot[
						color=blue,
						mark=square,]
						coordinates {
							(0.7071068,0.7756508)
							(0.3535534,0.4737106)
							(0.1767767,0.2608107)
							(0.08838835,0.1365934)
							(0.04419417,0.06991479)
							(0.02209709,0.03544724)
							(0.01104854,0.01782139)
							(0.005524272,0.008935388)
							
						};
						
						\logLogSlopeTriangle{0.4}{0.25}{0.08}{1}{black};
					\end{loglogaxis}  
				\end{tikzpicture}
			\end{adjustbox}
		\end{subfigure} 
		\\
		\begin{subfigure}[b]{0.45\textwidth}
			\begin{adjustbox}{width=0.95\linewidth} 
				\begin{tikzpicture}
					\begin{loglogaxis}[
						title={Value function initial time $L^2$-norm},
						xlabel={mesh size $h_k$},
						ylabel={relative error},
						xmax=1,
						ymax=1,
						legend pos=north west,
						ymajorgrids=true,
						grid style=dashed,
						]
						
						\addplot[
						color=blue,
						mark=square,]
						coordinates {
							(0.7071068,0.6241043)
							(0.3535534,0.2682495)
							(0.1767767,0.1108376)
							(0.08838835,0.0480436)
							(0.04419417,0.02192813)
							(0.02209709,0.01041001)
							(0.01104854,0.005062558)
							(0.005524272,0.00249511)
							
						};
						
						\logLogSlopeTriangle{0.4}{0.25}{0.08}{1}{black};
					\end{loglogaxis}  
				\end{tikzpicture}
			\end{adjustbox}
		\end{subfigure} 
		&
		\begin{subfigure}[b]{0.45\textwidth}
			\begin{adjustbox}{width=0.95\linewidth} 
				\begin{tikzpicture}
					\begin{loglogaxis}[
						title={Density function terminal time $L^2$-norm},
						xlabel={mesh size $h_k$},
						ylabel={relative error},
						xmax=1,
						ymax=1,
						legend pos=north west,
						ymajorgrids=true,
						grid style=dashed,
						]
						
						\addplot[
						color=blue,
						mark=square,]
						coordinates {
							(0.7071068,0.7016681)
							(0.3535534,0.3803955)
							(0.1767767,0.1911599)
							(0.08838835,0.0950924)
							(0.04419417,0.04734029)
							(0.02209709,0.02360347)
							(0.01104854,0.01177831)
							(0.005524272,0.005885448)
							
						};

						\logLogSlopeTriangle{0.4}{0.25}{0.08}{1}{black};
					\end{loglogaxis}  
				\end{tikzpicture}
			\end{adjustbox}
		\end{subfigure} 
	\end{tabular} 
	\caption{Convergence plots for approximations of the value function, density function, and transport vector. Optimal rates of convergence are observed for the value function and density function errors in the $H^1$-norm.}
	\label{exp:rel-err-plots}
\end{figure}
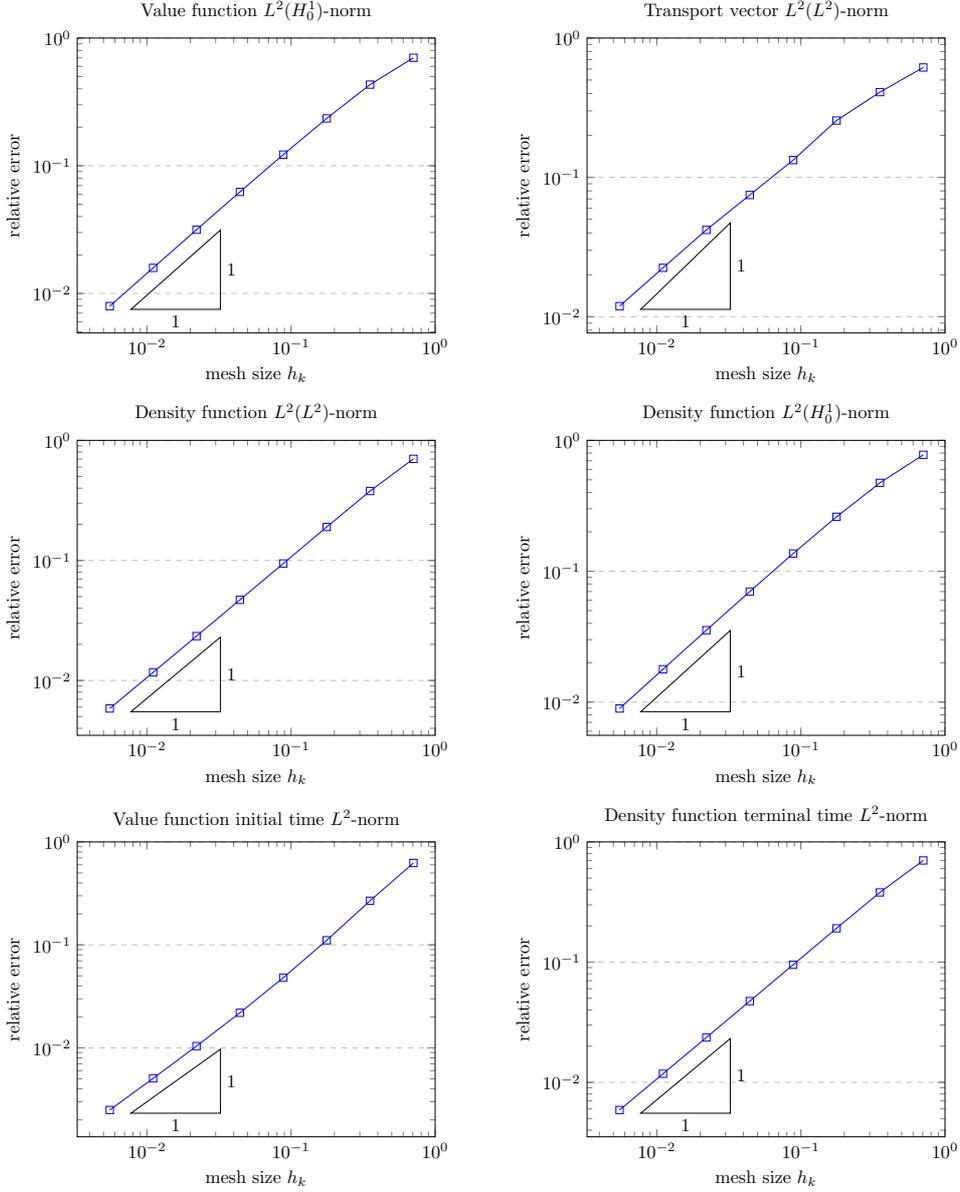

\section{Numerical Experiment}\label{sec9}
In this section we test the performance of the finite element scheme \eqref{weakform-space-time-discrete} in approximating a smooth solution to a MFG PDI of the form \eqref{eq:weakform-space-time}. 
Let $\Omega = (0,1)^2 \subset \mathbb{R}^2$ be the unit square, and let the time horizon $T=1$.
We set $\nu =1$, and we set
\begin{equation}\label{H-test}
	H(t,x,p)\coloneqq\max_{\alpha\in \overline{B_1(0)}}\left(\alpha\cdot p\right)=|p|\quad\forall (t,x,p)\in\overline{Q_T}\times\mathbb{R}^2,
\end{equation}
where $\overline{B_1(0)}$ denotes the closed unit ball in $\mathbb{R}^2$ and $L_H=1$.
We choose the data such that the exact solution {to \eqref{eq:weakform-space-time}} is
\begin{subequations}\label{exact_pair}
	\begin{gather} 
	{u}(x,y,t)\coloneqq \left(2\arctan(t) + 1\right)xy(e^{1.2}-e^{1.2x})(e^{0.7} - e^{0.7y}), 
	\\
	{m}(x,y,t)\coloneqq \left(\frac{1}{4}\tanh(t)+1\right)\sinh(x)\sinh(1-x)y\ln(2-y),
	\end{gather} 
\end{subequations}
for all $(x,y,t)\in \overline{\QT}$, {where the vector field $\tilde{b}_*\in \mathcal{D}_pH[u]$ in \eqref{eq:weakform-space-time} is unique and given by $\tilde{b}_*=|\nabla u|^{-1}\nabla u$ a.e.\ in $Q_T$.}
In particular, we choose of $F:L^2(\QT)\to L^2(\QT)$ and $S:L^2(\Omega)\to L^2(\Omega)$ to be given by $F[m] = m + F_0$ and   $S[m]\coloneqq \tanh(m) + S_0$ respectively, where $F_0 \in L^2(\QT)$ and $S_0 \in L^2(\Omega)$ are determined from $u$ and $m$ above. The source term $G\in L^2(0,T;H^{-1}(\Omega))$ is also determined from $m$ above and can be shown to be nonnegative in the sense of distributions {(see \cite[Sections 3.3 \& 5.3]{YohancePhD}).}
Overall, the data satisfy the hypotheses of Theorem \ref{uniqueness-space-time}, so the solution of the problem is unique.
We use a shape-regular sequence of uniform conforming meshes of $\Omega$ where the elements are right-angled triangles. 
The weights in the diffusion tensor \eqref{edge-tensor-formula} with are chosen as $\omega_{k,E}\coloneqq L_H\text{diam}(E)$, for all $E\in \mathcal{E}_k$, $k\in\mathbb{N}$.
We take the time-step to be comparable to the mesh-size, namely $\tau_k = \frac{h_k}{(1+2^{-k})\sqrt{2}} \eqsim h_k$ {for $k\in\mathbb{N}$}. 
The computations are performed using Firedrake \cite{rathgeber2016firedrake}. 

The results of the experiment are displayed in Figure \ref{exp:rel-err-plots}. {We observe that the approximations for the value function converge to $u$ in the $L^2(H_0^1)$-norm with a rate of order one, and likewise we observe a first-order rate of convergence for the approximations of $m$ in the $L^2(L^2)$-norm. These observed rates are the best possible to be expected since the discretization spaces $\{\Vkpm\}_{k=1}^8$ consist of functions that are piecewise affine in space and piecewise constant in time, and the stabilization term in the scheme \eqref{weakform-space-time-discrete} is consistent to first order in the mesh-size.

Furthermore, we observe strong convergence of the transport vector fields to $\tilde{b}_*$ in the $L^2(L^2)$-norm. This strong convergence is due to the fact that the value function satisfies $\nabla u\neq 0$ a.e.\ in $Q_T$ along with uniqueness of $\tilde{b}_*$ (see \cite[Chapter 6]{YohancePhD} for further details). In light of Corollary \ref{convergence-cor}, the strong convergence of the transport vector fields implies the strong convergence of the density function approximations in the $L^2(H_0^1)$-norm, which we observe in Figure \ref{exp:rel-err-plots} occurs with a convergence rate of order one. Finally, we observe strong convergence of the value function approximations at the initial time and of the density function approximations at the terminal time in the $L^2$-norm, which is in agreement with Theorem \ref{conv-main-thm}.}

\section*{Acknowledgments}
This work was supported by the Engineering and Physical Sciences Research Council [grant number EP/Y008758/1]. This work was also completed with the support of The Royal Society Career Development Fellowship. We would like to acknowledge the use of the UCL Myriad High Performance Computing Facility (Myriad@UCL), and associated support services, in the completion of this work.

\appendix
\section{Proof of Theorem~\ref{DMP-edge-stabilisation-result}}\label{sec:app:dmp}
In this section we prove Theorem \ref{DMP-edge-stabilisation-result}. For this, we will need the following key formula associated with the diffusion tensor $\Dk$ that was defined in \eqref{edge-tensor-formula}. Recall that for a given vertex $x_i \in \calVk$, the corresponding piecewise affine nodal basis function is denoted by $\xi_i$.
\begin{lemma}\label{d-global-non-positivity}
	Let $k\in\mathbb{N}$ be given. 
	Let $\{x_i,x_j\} \subset \calVk$ be a pair of distinct neighbouring vertices of $\Tk$, and let $E\in\mathcal{E}_k$ denote the edge between $x_i$ and $x_j$. Then 
	\begin{equation}\label{edge-tensor-off-diag}
		\int_{\Omega}\Dk\nabla \xi_j\cdot\nabla \xi_i\mathrm{d}x= -\frac{\wEk}{(\diam E )^2}\sum_{K\in \Tke}|K|_d,
	\end{equation}
	where $|\cdot|_d$ denotes the $d$-dimensional Lebesgue measure.
\end{lemma}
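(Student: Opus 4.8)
The plan is to reduce the global integral to an elementwise computation, exploiting that $\xi_i$ and $\xi_j$ are affine on each simplex so that $\nabla\xi_i$, $\nabla\xi_j$ and $\Dk$ are all piecewise constant with respect to $\Tk$. First I would write
\begin{equation*}
\int_\Omega \Dk\nabla\xi_j\cdot\nabla\xi_i\,\dd x = \sum_{K\in\Tk}|K|_d\,(\Dk|_K\,\nabla\xi_j)\cdot\nabla\xi_i,
\end{equation*}
and note that $\nabla\xi_i|_K$ vanishes unless $x_i$ is a vertex of $K$, and similarly for $\xi_j$. Consequently only the elements containing both $x_i$ and $x_j$, namely those in $\Tke$, contribute to the sum. (Here $E$ is understood to be an internal edge, as is implicit in the appearance of $\wEk$ on the right-hand side of~\eqref{edge-tensor-off-diag}, so that $E\in\calEK$ for each $K\in\Tke$.)

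On each $K\in\Tke$ I would substitute the definition~\eqref{edge-tensor-formula} of $\Dk|_K$ to obtain
\begin{equation*}
(\Dk|_K\,\nabla\xi_j)\cdot\nabla\xi_i = \sum_{E'\in\calEK}\omega_{k,E'}\,(\bm{t}_{E'}\cdot\nabla\xi_i)(\bm{t}_{E'}\cdot\nabla\xi_j),
\end{equation*}
thereby reducing matters to the edge-directional derivatives of the nodal functions. The heart of the argument is the identity for these derivatives: if $E'$ has endpoints $x_a$ and $x_b$, then the affine structure of $\xi_i$ on $K$ together with the nodal property $\xi_l(x_q)=\delta_{lq}$ gives $\nabla\xi_i\cdot(x_b-x_a)=\xi_i(x_b)-\xi_i(x_a)=\delta_{ib}-\delta_{ia}$, whence
\begin{equation*}
\bm{t}_{E'}\cdot\nabla\xi_i=\frac{\delta_{ib}-\delta_{ia}}{\diam E'}.
\end{equation*}

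The key observation is then that the product $(\bm{t}_{E'}\cdot\nabla\xi_i)(\bm{t}_{E'}\cdot\nabla\xi_j)$ is nonzero only when both $x_i$ and $x_j$ are endpoints of $E'$; since $x_i\neq x_j$, this forces $E'=E$. For $E'=E$ the two factors equal $\pm1/\diam E$ with opposite signs---independently of the orientation chosen for $\bm{t}_E$, since reversing $\bm{t}_E$ flips both signs---so their product is $-1/(\diam E)^2$. Hence $(\Dk|_K\,\nabla\xi_j)\cdot\nabla\xi_i=-\wEk/(\diam E)^2$ on every $K\in\Tke$, and summing against $|K|_d$ over $K\in\Tke$ yields~\eqref{edge-tensor-off-diag}. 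This computation is essentially routine; the only point requiring care is the orientation bookkeeping, which is precisely what makes the single surviving term independent of the choice of $\bm{t}_E$ and confirms that no other edge of $K$ contributes.
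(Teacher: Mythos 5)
Your proof is correct and follows essentially the same route as the paper: reduce to the elements in $\Tke$, observe that in the sum defining $\Dk|_K$ only the term for $E'=E$ survives, and compute the product of tangential derivatives as $-1/(\diam E)^2$ independently of orientation. The only cosmetic difference is that you justify the vanishing of the other terms via the nodal identity $\bm{t}_{E'}\cdot\nabla\xi_i=(\delta_{ib}-\delta_{ia})/\diam E'$, whereas the paper argues via orthogonality of $\nabla\xi_i|_K$ to the face opposite $x_i$ and uses $\xi_i+\xi_j=1$ on $E$ for the sign; both are equally valid.
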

\begin{proof}
Consider an element $K \in \Tk$ containing both vertices $x_i$ and $x_j$. Note that $\nabla \xi_i|_K$ is orthogonal to all vectors that are tangent to $F_{K,i}$ the face of $K$ that is opposite $x_i$. Furthermore, any edge $\widetilde{E}\in \calE_K$ that does not contain $x_i$ is contained in $F_{K,i}$.
Therefore $\nabla \xi_i|_K \cdot \bm{t}_{\widetilde{E}} =0$ for all edges $\widetilde{E}\in \calE_K$ that do not contain $x_i$.
Likewise, $\nabla \xi_j|_K \cdot \bm{t}_{\widetilde{E}} =0$ for all edges $\widetilde{E} \in \calE_K$ that do not contain $x_j$. Since the only edge of $\calE_K$ that contains both $x_i$ and $x_j$ is $E$, we see that
\[
\int_K \Dk \nabla\xi_i \cdot \nabla \xi_j \mathrm{d}x = \sum_{\widetilde{E}\in\calE_K} \wtEk |K|_d (\nabla\xi_i\cdot \bm{t}_{\widetilde{E}}) (\nabla\xi_j\cdot \bm{t}_{\widetilde{E}})= \wEk |K|_d (\nabla\xi_i\cdot \bm{t}_{E} )(\nabla\xi_j\cdot \bm{t}_{E}),
\]
i.e.\ the sum above simplifies to a single term.
It is easy to show that $\nabla \xi_i|_K\cdot\bm{t}_{E}=\pm \frac{1}{\diam E},$ where the sign depends on the choice of orientation of the vector $\bm{t}_{E}$.
Since $\xi_i+\xi_j=1$ identically on $E$, we also have $\nabla \xi_j|_K\cdot\bm{t}_{E}= -\nabla \xi_i|_K\cdot\bm{t}_{E}$. 
We therefore conclude that $\int_K \Dk \nabla\xi_i \cdot \nabla \xi_j \mathrm{d}x = - \frac{\wEk|K|_d}{(\diam E)^2}$.
The identity~\eqref{edge-tensor-off-diag} then follows by summation over all elements of the mesh.
\end{proof}

\paragraph{Proof of Theorem \ref{DMP-edge-stabilisation-result}}
It is clear that \eqref{eq:weight_condition} implies that $\norm{\Dk}_{L^\infty(K;\R^{d\times d})}\lesssim h_K$ for all $K\in\Tk$, so it remains only to prove the DMP property for arbitrary elements of $W(V_k,\Dk)$.
Fix $k\in\mathbb{N}$ and let $L\in W(V_k,\Dk)$ be an operator taking the form \eqref{L-operator} for some vector field $\tilde{b}:\Omega\to\R^\dim$ that satisfies the bound  $\|\tilde{b}\|_{L^{\infty}(\Omega;\R^\dim)}\leq L_H$. 
Note that $L$ {satisfies} the DMP {if and only if} the adjoint operator $L^*$ {satisfies} the DMP. {We therefore restrict our attention to showing that $L$ satisfies the DMP.}
Suppose $v\in V_{k}$ is such that $\langle Lv,\xi_i\rangle_{V_k^*\times V_k}\geq 0$ for all $ i\in\{1,\cdots,M_k\}$. Since $v$ is piecewise affine and continuous on $\Omega$, there exists a vertex $x_i$ of the mesh $\mathcal{T}_k$ where $v$ {achieves a global minimum value over $\overline{\Omega}$.}
In the case where $x_i \in \partial \Omega$ then the result is immediate since $v(x_i)=0$. Consider now the case where $x_i \in \Omega$ is an interior vertex. 
Let $\mathbb{S}_i$ denote the index set of all vertices $x_j\in\calVk$ that are neighbours to $x_i$. Note that by definition $\mathbb{S}_i$ does not include $x_i$, but $\mathbb{S}_i$ possibly includes neighbouring vertices on the boundary $\partial \Omega$.

Since the nodal basis functions form a partition of unity, we have $\nabla v = \sum_{j\in\mathbb{S}_i} \nabla \xi_j (v(x_j)-v(x_i))$ in $\supp \xi_i$.
Therefore, we find that
\begin{equation}\label{eq:DMP_proof_1}
\langle L v, \xi_i \rangle \leq \sum_{j\in\mathbb{S}_i} \left[(D_k \nabla \xi_j,\nabla\xi_i)_{\Omega} + (\tilde{b}{\cdot} \nabla\xi_j, \xi_i)_{\Omega} \right] (v(x_j)-v(x_i)),
\end{equation}
where we have used the fact that $\int_\Omega \nu \nabla v\cdot \nabla \xi_i\mathrm{d}x \leq 0$ as a result of the condition~\eqref{XZ-condition}, c.f.\ \cite{xu1999monotone}.
We will show below that the condition on the weights $\wEk$ in~\eqref{eq:weight_condition} implies that, for all $i\in\{1,\dots,M_k\}$,
\begin{equation}\label{eq:DMP_term_sign}
(D_k \nabla \xi_j,\nabla\xi_i)_{\Omega} + (\tilde{b}{\cdot} \nabla\xi_j, \xi_i)_{\Omega} < 0  \quad \forall j\in\mathbb{S}_i,
\end{equation}
for any $\tilde{b}\in L^\infty(\Omega;\R^d)$ satisfying $\norm{\tilde{b}}_{L^\infty(\Omega;\R^d)}\leq L_H$.
Assuming~\eqref{eq:DMP_term_sign} for the moment, we then deduce from $\langle L v, \xi_i\rangle \geq 0$ and from~\eqref{eq:DMP_proof_1} that $v(x_j)=v(x_i)$ for all $j\in\mathbb{S}_i$, i.e.\ $v$ achieves a minimum at all neighbouring vertices.

Repeating the above argument, we can eventually deduce that the global minimum of $v$ is also attained on the boundary $\partial\Omega$, where $v$ vanishes. 
This shows that $v\geq 0$ in $\Omega$ and thus $L$ satisfies the DMP.

Returning to the proof of~\eqref{eq:DMP_term_sign}, let $x_i$ and $x_j$ be a pair of distinct neighbouring vertices, and let $E$ be the edge between $x_i$ and $x_j$. Then, it is straightforward to show that $\norm{\nabla \xi_j}_{L^\infty(K)}\leq \frac{\delta}{2\diam E} $ for each element $K$ containing the edge $E$.
Therefore, using the bounds $\norm{\tilde{b}}_{L^\infty(\Omega;\R^d)}\leq L_H$ and $\norm{\xi_i}_{L^1(K)}\leq \frac{\abs{K}_d}{d+1}$, we find that $(\tilde{b}{\cdot}\nabla \xi_j,\xi_i)_{\Omega}\leq \sum_{K\in\Tke}\frac{\delta L_H|K|_d}{2(d+1)\diam E} $. Using~Lemma~\ref{d-global-non-positivity}, we therefore obtain
\begin{equation}\label{eq:DMP_proof_3}
\begin{split}
(D_k \nabla \xi_j,\nabla\xi_i)_{\Omega} + (\tilde{b}{\cdot} \nabla\xi_j, \xi_i)_{\Omega} \leq  \sum_{K\in\Tke} \left( \frac{\delta L_H  \diam E}{2(d+1)} -  \wEk\right)\frac{|K|_d}{(\diam E)^2}.
\end{split}
\end{equation}
We then see that~\eqref{eq:DMP_term_sign} follows from~\eqref{eq:DMP_proof_3} and~\eqref{eq:weight_condition}.
\hfill\proofbox

\section{Proofs of Lemmas~\ref{KFP_wellposedness_discrete} and~\ref{HJB_wellposedness_discrete}}\label{sec:app:discrete_wellposedness}
{In order to prove Lemmas~\ref{KFP_wellposedness_discrete} and~\ref{HJB_wellposedness_discrete}, we introduce some weighted norms on the discrete spaces that will help to provide a sharper analysis.
Recall that the matrix-valued function $A_k \in L^\infty(\Omega;\R^{d\times d})$ is defined by $A_k\coloneqq \nu\mathbb{I}+\Dk\in L^{\infty}(\Omega;\mathbb{R}^{d\times d})$ {with $\Dk$ defined in~\eqref{edge-tensor-formula}}.}
{Let the norm~$\norm{\cdot}_{A_k}\coloneqq L^2(\Omega;\R^\dim)\tends \R_{\geq 0} $ be defined by $\norm{\bm{w}}_{A_k}=\sqrt{(A_k\bm{w},\bm{w})_\Omega}$ for all $\bm{w}\in L^2(\Omega;\R^d)$. Note that $\norm{\cdot}_{A_k}$ is equivalent to the standard norm on $L^2(\Omega;\R^\dim)$, i.e.\ $\norm{\bm{w}}_{A_k}\eqsim \norm{\bm{w}}_{\Omega}$ for all $\bm{w}\in L^2(\Omega;\R^d)$, since $A_k\in L^\infty(\Omega;\R^{d\times d})$ and since $A_k\geq \nu \mathbb{I}$.}
{Next} we define the dual norm of $\|\cdot\|_{A_k} \colon V_k\tends \R_{\geq 0}$ by
\begin{equation}
\|w\|_{{\Adualk}}\coloneqq \sup_{v\in V_k\setminus\{0\}}\frac{{(w,v)_{\Omk}}}{\|\nabla v\|_{A_k}}\quad\forall w\in V_k. 
\end{equation}
{Observe that~$\norm{w}_{\Adualk} \eqsim \norm{w}_{\dualk}$ for all $w\in V_k^*$ where $\norm{\cdot}_{\dualk}$ is defined in~\eqref{eq:dualknorm}.}
Define the bilinear form $B_k\colon \Vkp\times \Vk\tends \R$ by 
\begin{equation}\label{b-bilinear-form}
	B_k(u,v)\coloneqq \sum_{n=1}^{\Nk}\int_{I_n}(\partial_t\mathcal{I}_+u,v)_{\Omk}+(A_k \nabla u, \nabla v)_{\Omega}\mathrm{d}t\quad\forall (u,v)\in \timeVkforward\times \mathbb{V}_k.
\end{equation}

Given $k\in\mathbb{N}$, let $\tak\in L^\infty(0,T)$ denote the piecewise constant time-dependent weight function that is defined by 
\begin{equation}\label{a-weight-definition}
	\tak|_{I_n}\coloneqq \TAK{n}= \frac{1}{(1+\nu^{-1} L_H^{2}\tau_k)^n}\quad \forall n\in\{1,\cdots,\Nk\}.
\end{equation}
Since the time-steps are assumed to be uniform, it is straight-forward to show that $a_k \in L^{\infty}(0,T)$  is strictly positive and satisfies the uniform bounds 
\begin{equation}\label{eq:weight_bounds}
\exp(-\nu^{-1}L_H^2T) \leq \tak \leq 1 \quad \text{a.e.\ in } (0,T),\quad \forall k\in\N.
\end{equation}
{We now define the following weighted discrete norms on the spaces $\Vk$ and $\Vkp$ that will help to simplify the analysis.}
First, let the norm $\norm{\cdot}_{\Xk}\colon \Vk \tends \R_{\geq 0}$ be defined by
\begin{equation}\label{discrete-weighted-X-norm}
	\|v \|_{\Xk}^2\coloneqq  \int_0^T\tak^{-1} \norm{\nabla v}_{A_k}^2\mathrm{d}t \quad \forall v\in \Vk.
\end{equation}
Note that $\norm{\cdot}_{\Xk}$ is equivalent to $\norm{\cdot}_X$ on $\Vk$, where we recall that $\norm{\cdot}_X$ is defined in~\eqref{eq:continuous_norms}.
Second, let the norm $\norm{\cdot}_{\Yk}\colon \Vkp \tends \R_{\geq 0}$ be defined by
\begin{equation}\label{discrete-weighted-Y-norm}
	\begin{split}
		\norm{w}_{\Yk}^2 \coloneqq & \int_0^T \tak\left(\norm{\p_t \Ip  w }_{\Adualk}^2 + \norm{\nabla w}_{A_k}^2 + \frac{\nu^{-1}L_H^2}{1+\nu^{-1}L_H^2 \tau_k}  \norm{w}_{\Omk}^2 \right)\dd t\\
		&+ \frac{\TAK{N_k}\norm{w(T)}_{\Omk}^2}{1+\nu^{-1}L_H^2 \tau_k} + \frac{1}{1+\nu^{-1}L_H^2 \tau_k} \sum_{n=0}^{N_k-1} \TAK{n} \norm{\jump{w}_{n}}_{\Omk}^2,
	\end{split}
\end{equation}
for all $w\in\Vkp$.
{The next result gives parabolic inf-sup identities that show the relation between the weighted norms $\norm{\cdot}_{\Xk}$ and $\norm{\cdot}_{\Yk}$.
Recall that $\mathbb{V}_{k,0}^+= \{v\in \Vkp,\; v(0)=0\}$.}
\begin{theorem}\label{inf-sup-theorem}
	For all $w\in\timeVkforward$ we have 
	\begin{equation}\label{YX-inf-sup}
		\|w\|_{\Yk}^2=\left[\sup_{v\in\mathbb{V}_{k}\backslash\{0\}}\frac{B_k(w,v)}{\|v\|_{\Xk}}\right]^2+ \frac{\|w(0)\|_{\Omk}^2}{1+\nu^{-1} L_H^{2}\tau_k},
	\end{equation}
	and for all $v\in\mathbb{V}_{k}$
	\begin{equation}\label{XY-inf-sup}
		\|v\|_{\Xk}=\sup_{w\in{\mathbb{V}_{k,0}^{+}}\backslash\{0\}}\frac{B_k(w,v)}{\|w\|_{\Yk}}.
	\end{equation}
\end{theorem}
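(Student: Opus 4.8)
The plan is to establish the two identities in turn, deriving~\eqref{XY-inf-sup} from~\eqref{YX-inf-sup} by an abstract duality argument. For~\eqref{YX-inf-sup}, the key observation is that, because functions in $\Vkp$ and $\Vk$ are piecewise constant in time, the supremum over $v\in\Vk$ decouples across the time intervals $\{I_n\}_{n=1}^{\Nk}$. Writing $w_n\coloneqq w|_{I_n}$ and $v_n\coloneqq v|_{I_n}$ (with $w_0\coloneqq w(0)$), evaluating the integrals in $B_k$ gives $B_k(w,v)=\sum_{n=1}^{\Nk}\ell_n(v_n)$ with $\ell_n(v_n)\coloneqq(w_n-w_{n-1},v_n)_{\Omk}+\tau_k(A_k\nabla w_n,\nabla v_n)_\Omega$, while $\norm{v}_{\Xk}^2=\sum_{n=1}^{\Nk}\TAK{n}^{-1}\tau_k\norm{\nabla v_n}_{A_k}^2$. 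Since both the linear functional and the squared norm split into sums of terms each depending only on the individual $v_n$, a componentwise Cauchy--Schwarz argument yields
\begin{equation*}
\left[\sup_{v\in\Vk\setminus\{0\}}\frac{B_k(w,v)}{\norm{v}_{\Xk}}\right]^2=\sum_{n=1}^{\Nk}\TAK{n}\tau_k^{-1}\,\norm{\ell_n}_*^2,
\end{equation*}
where $\norm{\ell_n}_*$ is the dual norm of $\ell_n$ relative to $\norm{\nabla\cdot}_{A_k}$ on $V_k$.

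Next I would evaluate each $\norm{\ell_n}_*^2$ by Riesz representation in the inner product $(A_k\nabla\cdot,\nabla\cdot)_\Omega$ on $V_k$. The representative of $\ell_n$ is $\zeta_n+\tau_k w_n$, where $\zeta_n\in V_k$ solves $(A_k\nabla\zeta_n,\nabla\psi)_\Omega=(w_n-w_{n-1},\psi)_{\Omk}$ for all $\psi\in V_k$. Expanding the square and using $\norm{\nabla\zeta_n}_{A_k}^2=\norm{w_n-w_{n-1}}_{\Adualk}^2$ together with $(A_k\nabla\zeta_n,\nabla w_n)_\Omega=(w_n-w_{n-1},w_n)_{\Omk}$ gives
\begin{equation*}
\norm{\ell_n}_*^2=\norm{w_n-w_{n-1}}_{\Adualk}^2+2\tau_k(w_n-w_{n-1},w_n)_{\Omk}+\tau_k^2\norm{\nabla w_n}_{A_k}^2.
\end{equation*}
I would then apply the polarization identity $2(w_n-w_{n-1},w_n)_{\Omk}=\norm{w_n}_{\Omk}^2-\norm{w_{n-1}}_{\Omk}^2+\norm{\jump{w}_{n-1}}_{\Omk}^2$ to the cross term, and identify $\tau_k^{-1}\norm{w_n-w_{n-1}}_{\Adualk}^2=\tau_k\norm{\p_t\Ip w|_{I_n}}_{\Adualk}^2$ and $\tau_k\norm{\nabla w_n}_{A_k}^2=\int_{I_n}\norm{\nabla w}_{A_k}^2\dd t$, so that the $\p_t\Ip$ and gradient contributions assemble into the first integral term of $\norm{w}_{\Yk}^2$.

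The crux of the argument, and the step I expect to be the main obstacle, is the summation against the weights $\TAK{n}$. Using the defining recurrence $\TAK{n}=(1+\nu^{-1}L_H^2\tau_k)^{-1}\TAK{n-1}$, the telescoping sum $\sum_{n}\TAK{n}\big(\norm{w_n}_{\Omk}^2-\norm{w_{n-1}}_{\Omk}^2\big)$ must be reorganized into the weighted zeroth-order term $\tfrac{\nu^{-1}L_H^2}{1+\nu^{-1}L_H^2\tau_k}\int_0^T\tak\norm{w}_{\Omk}^2\dd t$, the terminal term $\tfrac{\TAK{\Nk}\norm{w(T)}_{\Omk}^2}{1+\nu^{-1}L_H^2\tau_k}$, and the initial contribution $-\tfrac{\norm{w(0)}_{\Omk}^2}{1+\nu^{-1}L_H^2\tau_k}$, while the jump sum reindexes to $\tfrac{1}{1+\nu^{-1}L_H^2\tau_k}\sum_{n=0}^{\Nk-1}\TAK{n}\norm{\jump{w}_n}_{\Omk}^2$. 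Collecting all terms reproduces $\norm{w}_{\Yk}^2-\tfrac{\norm{w(0)}_{\Omk}^2}{1+\nu^{-1}L_H^2\tau_k}$, which is exactly~\eqref{YX-inf-sup}. The delicate point is that the weight $\tak$ is tuned precisely so that each mass difference $\norm{w_n}_{\Omk}^2-\norm{w_{n-1}}_{\Omk}^2$ converts into a nonnegative zeroth-order term plus the correct surviving endpoint contributions; the bookkeeping of the index shifts and the cancellation producing the exact factor $-(1+\nu^{-1}L_H^2\tau_k)^{-1}$ in front of $\norm{w(0)}_{\Omk}^2$ is where care is required.

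Finally, for~\eqref{XY-inf-sup} I would argue abstractly. Let $(\cdot,\cdot)_{\Xk}$ denote the inner product inducing $\norm{\cdot}_{\Xk}$, and define the linear map $\mathcal{B}\colon\Vkpo\to\Vk$ by $(\mathcal{B}w,v)_{\Xk}=B_k(w,v)$ for all $v\in\Vk$. For $w\in\Vkpo$ the term $\norm{w(0)}_{\Omk}^2$ vanishes, so~\eqref{YX-inf-sup} reads $\norm{\mathcal{B}w}_{\Xk}=\sup_{v}B_k(w,v)/\norm{v}_{\Xk}=\norm{w}_{\Yk}$; hence $\mathcal{B}$ is a linear isometry from $(\Vkpo,\norm{\cdot}_{\Yk})$ into $(\Vk,\norm{\cdot}_{\Xk})$. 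Since $\dim\Vkpo=\dim\Vk=\Nk\dim V_k$, this isometry is surjective, and therefore for every $v\in\Vk$,
\begin{equation*}
\sup_{w\in\Vkpo\setminus\{0\}}\frac{B_k(w,v)}{\norm{w}_{\Yk}}=\sup_{w\in\Vkpo\setminus\{0\}}\frac{(\mathcal{B}w,v)_{\Xk}}{\norm{\mathcal{B}w}_{\Xk}}=\sup_{v'\in\Vk\setminus\{0\}}\frac{(v',v)_{\Xk}}{\norm{v'}_{\Xk}}=\norm{v}_{\Xk},
\end{equation*}
which establishes~\eqref{XY-inf-sup} and completes the proof.
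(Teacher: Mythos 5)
Your proposal is correct and follows essentially the same route as the paper's proof: your per-interval Riesz representatives $\zeta_n+\tau_k w_n$ reassemble (up to the scaling $\TAK{n}\tau_k^{-1}$) into the paper's global maximizer $v_\dagger=\tak(z+w)$, the expansion of the square and the weighted telescoping of the cross term are identical to the paper's computation, and your isometry/dimension-count argument is just an explicit version of the duality step the paper invokes for~\eqref{XY-inf-sup}. No gaps.
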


\begin{proof}
{Let $w\in\Vkp$ be arbitrary. It is clear that the supremum on the right-hand side in~\eqref{YX-inf-sup} is achieved by the test function $v_\dagger=\tak(z+w) \in \Vk$, where $z\in\Vk$ denotes the unique element of $\Vk$ such that $ (A_k \nabla z|_{I_n},\nabla v) = (\p_t \Ip w|_{I_n},v)_{\Omk}$ for all $v\in V_k$, for all $n\in\{1,\dots, N_k\}$. Observe that $v_\dagger\in\Vk$ is justified by the fact that $\tak$ is piecewise constant in time.
We therefore deduce that
\begin{equation}\label{eq:inf-sup-1}
\begin{split}
\left[\sup_{v\in\mathbb{V}_{k}\backslash\{0\}}\frac{B_k(w,v)}{\|v\|_{\Xk}}\right]^2 & = \int_0^T \tak \norm{\nabla(z+w)}_{A_k}^2\mathrm{d}t
\\ &= \int_0^T \tak \left(\norm{\p_t \Ip w}_{\Adualk}^2+\norm{\nabla w}_{A_k}^2 + 2(\p_t\Ip w,w)_{\Omk}\right)\mathrm{d}t,
\end{split}
\end{equation}
where we have expanded the square in the second-line above.
A straightforward calculation shows that
\begin{multline}\label{eq:inf-sup-2}
2\int_0^T \tak (\p_t \Ip w,w)_{\Omk} \mathrm{d}t  = \frac{\nu^{-1}L_H^2}{1+\nu^{-1}L_H^2\tau_k} \int_0^T \tak \norm{w}_{\Omk}^2\mathrm{d}t \\ + \frac{\TAK{N_k}\norm{w(T)}_{\Omk}^2}{1+\nu^{-1}L_H^2 \tau_k}   +\frac{1}{1+\nu^{-1}L_H^2\tau_k} \sum_{n=0}^{N_k-1}\TAK{n}\norm{\jump{w}_{n}}_{\Omk}^2 -  \frac{\norm{w(0)}_{\Omk}^2}{1+\nu^{-1}L_H^2\tau_k} .
\end{multline}
Thus \eqref{eq:inf-sup-1} and \eqref{eq:inf-sup-2} imply~\eqref{YX-inf-sup}. The second identity~\eqref{XY-inf-sup} then follows by duality.}
\end{proof}

{
\begin{corollary}\label{cor:equiv_norms}
We have $\norm{w}_{\Yk}\eqsim \norm{w}_{\Vkp}$ for any $w\in\Vkp$.
\end{corollary}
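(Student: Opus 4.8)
The plan is to establish the two‑sided bound $\norm{w}_{\Yk}\eqsim\norm{w}_{\Vkp}$ by a term‑by‑term comparison, exploiting the elementary norm equivalences that are already available. First I would record the ingredients used throughout: the uniform bounds $\exp(-\nu^{-1}L_H^2 T)\leq \tak \leq 1$ from~\eqref{eq:weight_bounds}, which give $\tak\eqsim 1$ with constants independent of $k$; the equivalence $\norm{\cdot}_{A_k}\eqsim\norm{\cdot}_\Omega$, valid since $\nu\mathbb{I}\leq A_k$ and $\norm{\Dk}_{L^\infty(\Omega;\R^{d\times d})}\lesssim h_k\leq h_1$; the equivalence $\norm{\cdot}_{\Adualk}\eqsim\norm{\cdot}_{\dualk}$ noted just after the definition of $\norm{\cdot}_{\Adualk}$; and the mass‑lumping equivalence $\norm{\cdot}_{\Omk}\eqsim\norm{\cdot}_\Omega$ from~\eqref{eq:L^2-mass_lumped_bound}. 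I would also use that $\tau_k\leq\tau_1$ makes $1+\nu^{-1}L_H^2\tau_k\eqsim 1$.

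For the upper bound $\norm{w}_{\Yk}\lesssim\norm{w}_{\Vkp}$, the difficulty is the three additional nonnegative contributions in~\eqref{discrete-weighted-Y-norm} (the mass term, the terminal term, and the jump sum) that are absent from~\eqref{eq:discrete_vkp_norm}. These are absorbed using the identity obtained by combining~\eqref{YX-inf-sup} with the expansion of its supremum in~\eqref{eq:inf-sup-1}, namely
\[
\norm{w}_{\Yk}^2 = \int_0^T\tak\Bigl(\norm{\p_t\Ip w}_{\Adualk}^2 + \norm{\nabla w}_{A_k}^2 + 2(\p_t\Ip w,w)_{\Omk}\Bigr)\dd t + \frac{\norm{w(0)}_{\Omk}^2}{1+\nu^{-1}L_H^2\tau_k}.
\]
Bounding the cross term by Young's inequality, $2(\p_t\Ip w,w)_{\Omk}\leq \norm{\p_t\Ip w}_{\Adualk}^2 + \norm{\nabla w}_{A_k}^2$, the integrand is at most $2(\norm{\p_t\Ip w}_{\Adualk}^2 + \norm{\nabla w}_{A_k}^2)$; together with $\tak\leq 1$ and the equivalences above this yields $\norm{w}_{\Yk}^2\lesssim \int_0^T \norm{\p_t\Ip w}_{\dualk}^2 + \norm{\nabla w}_\Omega^2\,\dd t + \norm{w(0)}_{\Omk}^2 = \norm{w}_{\Vkp}^2$.

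For the lower bound $\norm{w}_{\Vkp}\lesssim\norm{w}_{\Yk}$ I would instead use the explicit form~\eqref{discrete-weighted-Y-norm}, in which every summand is nonnegative; hence each summand is individually bounded by $\norm{w}_{\Yk}^2$. In particular $\int_0^T\tak\norm{\p_t\Ip w}_{\Adualk}^2\dd t\leq\norm{w}_{\Yk}^2$ and $\int_0^T\tak\norm{\nabla w}_{A_k}^2\dd t\leq\norm{w}_{\Yk}^2$, and dividing out $\tak\geq\exp(-\nu^{-1}L_H^2 T)$ controls the $\norm{\p_t\Ip w}_{\dualk}$ and $\norm{\nabla w}_\Omega$ contributions of $\norm{w}_{\Vkp}^2$ via the equivalences above. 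The only remaining piece is the initial term $\norm{w(0)}_{\Omk}^2$, which does not appear in~\eqref{discrete-weighted-Y-norm}; here I would invoke~\eqref{YX-inf-sup} directly, since the supremum there is nonnegative and gives $\norm{w(0)}_{\Omk}^2\leq (1+\nu^{-1}L_H^2\tau_1)\norm{w}_{\Yk}^2$.

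The main obstacle is precisely this initial‑time contribution in the lower bound: a naive attempt to recover $w(0)$ from the terminal value and the jumps by telescoping introduces a factor of $N_k\to\infty$ and fails to be uniform, so it is essential to extract $\norm{w(0)}_{\Omk}^2$ from the inf‑sup identity~\eqref{YX-inf-sup} rather than from the explicit expression~\eqref{discrete-weighted-Y-norm}. The indefinite sign of the cross term $2(\p_t\Ip w,w)_{\Omk}$ is likewise what dictates the asymmetry of the argument, forcing the use of the explicit nonnegative form~\eqref{discrete-weighted-Y-norm} for the lower bound while the absorbing identity above is used only for the upper bound.
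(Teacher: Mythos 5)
Your proof is correct and follows essentially the same route as the paper: both directions rest on the weight bounds \eqref{eq:weight_bounds}, the equivalences $\norm{\cdot}_{A_k}\eqsim\norm{\cdot}_\Omega$, $\norm{\cdot}_{\Adualk}\eqsim\norm{\cdot}_{\dualk}$, $\norm{\cdot}_{\Omk}\eqsim\norm{\cdot}_\Omega$, and the extraction of $\norm{w(0)}_{\Omk}\lesssim\norm{w}_{\Yk}$ from the inf-sup identity \eqref{YX-inf-sup}, exactly as in the paper. The only cosmetic difference is in the upper bound, where you absorb the mass, terminal and jump terms at once via the expanded identity \eqref{eq:inf-sup-1} and Young's inequality applied to the cross term $2(\p_t\Ip w,w)_{\Omk}$, whereas the paper bounds these terms individually using Lemma~\ref{lem:vkp_infty_bound} and the jump estimate $\sum_{n=0}^{N_k-1}\norm{\jump{w}_n}_{\Omk}^2\leq\norm{w}_{\Vkp}^2$ --- these are the same underlying computation, cf.\ \eqref{eq:inf-sup-2}.
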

\begin{proof}
Let $w\in\Vkp$ be arbitrary. The upper bound $\norm{w}_{\Yk}\lesssim \norm{w}_{\Vkp}$ is a consequence of the uniform bounds on $\tak$ in~\eqref{eq:weight_bounds}, the maximum norm bound of Lemma~\ref{lem:vkp_infty_bound}, and the bound for the jumps $\sum_{n=0}^{N_k-1}\norm{\jump{w}_n}_{\Omk}^2 \leq \norm{w}_{\Vkp}^2$. The converse bound follows again from~\eqref{eq:weight_bounds}, and from~\eqref{YX-inf-sup} which implies that $\norm{w(0)}_{\Omk}\lesssim \norm{w}_{\Yk}$.
\end{proof}}

\begin{lemma}\label{tech-result-discrete}
For each $k\in\N$, let the constant $\gamma_k>0$ be defined by
\begin{equation}
\gamma_k\coloneqq \sqrt{\frac{1+\nu^{-1}L_H^2\tau_k}{2}}.
\end{equation}
Then, for any $w\in\timeVkforward$, we have
	\begin{equation}\label{tech-bound-discrete}
		\int_0^T\nu^{-1}L_H^2\tak\norm{w}_{\Omega}^2\mathrm{d}t\leq \gamma_k^2\|w\|_{\Yk}^2+ {\frac{1}{2}} \|w(0)\|_{\Omk}^2.
	\end{equation}
\end{lemma}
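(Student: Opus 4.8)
The plan is to reduce the statement to the mass-lumped norm and then play off the weighted energy identity~\eqref{eq:inf-sup-2} against an elementary duality-plus-Young estimate. First I would use the first inequality in~\eqref{eq:L^2-mass_lumped_bound} to replace $\norm{w}_\Omega$ by $\norm{w}_{\Omk}$, so that it suffices to establish the stronger bound
\[
\int_0^T\nu^{-1}L_H^2\,\tak\norm{w}_{\Omk}^2\,\dd t\leq\gamma_k^2\norm{w}_{\Yk}^2+\tfrac12\norm{w(0)}_{\Omk}^2.
\]
Writing $c\coloneqq 1+\nu^{-1}L_H^2\tau_k$ so that $\gamma_k^2=c/2$, the guiding observation is that the weighted $L^2$-in-time term $\frac{\nu^{-1}L_H^2}{c}\int_0^T\tak\norm{w}_{\Omk}^2\,\dd t$ is already present in $\norm{w}_{\Yk}^2$ with coefficient one; multiplying by $\gamma_k^2$ yields exactly $\frac{\nu^{-1}L_H^2}{2}\int_0^T\tak\norm{w}_{\Omk}^2\,\dd t$, i.e.\ precisely \emph{half} of the desired left-hand side. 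This pins down the origin of the factor $\tfrac12$ and makes clear that the remaining half must be manufactured separately.

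The key step is to produce that second half. For a.e.\ $t\in(0,T)$, the definition of the dual norm $\norm{\cdot}_{\Adualk}$ together with Young's inequality gives the pointwise bound $2(\p_t\Ip w,w)_{\Omk}\leq\norm{\p_t\Ip w}_{\Adualk}^2+\norm{\nabla w}_{A_k}^2$; multiplying by the positive weight $\tak$ and integrating bounds $2\int_0^T\tak(\p_t\Ip w,w)_{\Omk}\,\dd t$ from above. On the other hand, the exact identity~\eqref{eq:inf-sup-2} expresses this \emph{same} quantity as $\frac{\nu^{-1}L_H^2}{c}\int_0^T\tak\norm{w}_{\Omk}^2\,\dd t$ plus the nonnegative terminal and jump contributions, minus $c^{-1}\norm{w(0)}_{\Omk}^2$. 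Equating the two and bounding the left side below by dropping the nonnegative jump and terminal terms yields
\[
\frac{\nu^{-1}L_H^2}{c}\int_0^T\tak\norm{w}_{\Omk}^2\,\dd t\leq\int_0^T\tak\bigl(\norm{\p_t\Ip w}_{\Adualk}^2+\norm{\nabla w}_{A_k}^2\bigr)\dd t+\tfrac1c\norm{w(0)}_{\Omk}^2.
\]
Multiplying through by $\gamma_k^2=c/2$ delivers the missing half, namely $\frac{\nu^{-1}L_H^2}{2}\int_0^T\tak\norm{w}_{\Omk}^2\,\dd t$, controlled by $\frac{c}{2}\int_0^T\tak(\norm{\p_t\Ip w}_{\Adualk}^2+\norm{\nabla w}_{A_k}^2)\,\dd t+\tfrac12\norm{w(0)}_{\Omk}^2$.

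Finally I would add the two halves: the sum $\frac{c}{2}\int_0^T\tak(\norm{\p_t\Ip w}_{\Adualk}^2+\norm{\nabla w}_{A_k}^2)\,\dd t+\frac{\nu^{-1}L_H^2}{2}\int_0^T\tak\norm{w}_{\Omk}^2\,\dd t$ equals $\gamma_k^2$ times the first three terms of $\norm{w}_{\Yk}^2$, which is at most $\gamma_k^2\norm{w}_{\Yk}^2$ once the discarded nonnegative terminal and jump terms are restored. Combining this with the $\tfrac12\norm{w(0)}_{\Omk}^2$ reproduces $\int_0^T\nu^{-1}L_H^2\tak\norm{w}_{\Omk}^2\,\dd t$, and the norm comparison from the first step finishes the argument. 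I expect the only real obstacle to be bookkeeping rather than any analytic difficulty: one must keep the constants $c$, $\gamma_k^2$, and $\nu^{-1}L_H^2/c$ mutually consistent and, at each stage, drop each nonnegative auxiliary quantity (the jumps, the terminal value, and the factor $\gamma_k^2$ multiplying them) in the direction that preserves the inequality. No compactness, regularity, or fixed-point input is required—everything rests on~\eqref{eq:inf-sup-2}, the definition of $\norm{\cdot}_{\Adualk}$, Young's inequality, and~\eqref{eq:L^2-mass_lumped_bound}.
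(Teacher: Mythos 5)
Your proposal is correct and follows essentially the same route as the paper's proof: both rest on the identity~\eqref{eq:inf-sup-2}, the dual-norm/Young bound $2(\p_t\Ip w,w)_{\Omk}\leq\norm{\p_t\Ip w}_{\Adualk}^2+\norm{\nabla w}_{A_k}^2$, dropping the nonnegative jump and terminal terms, multiplying by $\gamma_k^2=\tfrac{1}{2}(1+\nu^{-1}L_H^2\tau_k)$, and the comparison $\norm{w}_\Omega\leq\norm{w}_{\Omk}$. The only difference is cosmetic bookkeeping (you split the left-hand side into two halves where the paper adds the weighted $L^2$-in-time term to both sides before rescaling), so the two arguments are algebraically identical.
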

{
\begin{proof}
Let $w\in\Vkp$ be arbitrary.
The definition of the dual norm $\norm{\cdot}_{\Adualk}$ and the Cauchy--Schwarz inequality, applied to the left-hand side of~\eqref{eq:inf-sup-2}, imply that
\begin{equation*}
\frac{\nu^{-1}L_H^2}{1+\nu^{-1}L_H^2 \tau_k}\int_0^T \tak \norm{w}_{\Omk}^2\mathrm{d}t -  \frac{\norm{w(0)}_{\Omk}^2}{1+\nu^{-1}L_H^2\tau_k}  \leq \int_0^T \tak\left(\norm{\p_t \Ip w}_{\Adualk}^2+\norm{\nabla w}_{A_k}^2\right)\mathrm{d}t.
\end{equation*}
Therefore, we add $\frac{\nu^{-1}L_H^2}{1+\nu^{-1}L_H^2 \tau_k}\int_0^T \tak \norm{w}_{\Omk}^2\mathrm{d}t$ to both sides of the equation above to find that
\begin{equation}\label{eq:tech_result_1}
 \frac{2\nu^{-1}L_H^2}{1+\nu^{-1}L_H^2 \tau_k}\int_0^T \tak \norm{w}_{\Omk}^2\mathrm{d}t -  \frac{\norm{w(0)}_{\Omk}^2}{1+\nu^{-1}L_H^2\tau_k} \leq \norm{w}_{\Yk}^2.
\end{equation} 
We then obtain~\eqref{tech-bound-discrete} from~\eqref{eq:tech_result_1} after noting that $\norm{w}_{\Omega}\leq\norm{w}_{\Omk}$ by~\eqref{eq:L^2-mass_lumped_bound}.
\end{proof}
}

\paragraph{Proof of Lemma~\ref{KFP_wellposedness_discrete}}
{This result is essentially well-known so we sketch only the main ideas. Note that~\eqref{KFP_eqn_discrete} can be re-written in time-stepping form as the sequence of equations
\begin{equation}
(\overline{m}|_{I_n},v|_{I_n})_{\Omk} + \tau_k\langle L_n \overline{m}|_{I_n},v|_{I_n}\rangle_{V_k^*\times V_k} = (\overline{m}|_{I_{n-1}},v|_{I_n})_{\Omk} + \int_{I_n}\langle G,v|_{I_n}\rangle \mathrm{d}t,
\end{equation}
for each $n\in\{1,\dots,N_k\}$ for some operator $L_n  \in W(V_k,\Dk)$ of the form of~\eqref{L-operator}.
Therefore, the existence and uniqueness of the solution $\overline{m}$ along with its nonnegativity in the case of nonnegative data is a consequence of the DMP, c.f.\ Theorem~\ref{DMP-edge-stabilisation-result}.
It remains only to show the bound~\eqref{eq:KFP_apriori_bound}.
Taking the square-roots to both sides of the identity in~\eqref{YX-inf-sup} and applying the triangle inequality implies that
\[
\norm{\overline{m}}_{\Yk}\leq \sup_{v\in\Vk\setminus\{0\}}\frac{B_k(\overline{m},v)}{\|v\|_{\Xk}} + \frac{\norm{g}_{\Omk}} {\sqrt{1+\nu^{-1}L_H^2\tau_k}},
\]
where we have used the fact $\overline{m}(0)=g$, and where $B_k(\overline{m},v)$ is found by rewriting~\eqref{KFP_eqn_discrete} equivalently as
\begin{equation}
B_k(\overline{m},v) = \int_0^T \langle G,v \rangle - (\overline{m},\tilde{b}{\cdot}\nabla v)_\Omega \mathrm{d}t \quad \forall v\in \Vk.
\end{equation}
Note that $\sup_{v} \frac{B_k(\overline{m},v)}{\norm{v}_{\Xk}} \leq \sup_{v} \frac{\int_0^T \langle G,v \rangle\mathrm{d}t}{\norm{v}_{\Xk}}+\sup_{v} \frac{\int_0^T(\overline{m},\tilde{b}{\cdot}\nabla v)_\Omega \mathrm{d}t}{\norm{v}_{\Xk}} $, where the suprema are over all functions in $\Vk\setminus\{0\}$.
Since $ \int_0^T \tak^{-1}\nu L_H^{-2}\norm{\tilde{b}\cdot\nabla v}_\Omega^2\mathrm{d}t\leq \norm{v}_{\Xk}^2$ for all $v\in\Vk$, we have 
\begin{equation}
\sup_{v \in \Vk\setminus\{0\}} \frac{\int_0^T (\overline{m},\tilde{b}{\cdot}\nabla v)_\Omega \mathrm{d}t}{\norm{v}_{\Xk}} \leq \left(\int_0^T \nu^{-1}L_H^2 \tak \norm{\overline{m}}_{\Omega}^2\mathrm{d}t \right)^\frac{1}{2}
 \leq \gamma_k \norm{\overline{m}}_{\Yk}+\frac{1}{\sqrt{2}}\norm{g}_{\Omk},
\end{equation}
where the second inequality follows from~\eqref{tech-bound-discrete} of Lemma \ref{tech-result-discrete}.
Therefore, we find that
\[
\norm{\overline{m}}_{\Yk} \leq \gamma_k \norm{\overline{m}}_{\Yk}+ C\left( \norm{G}_{L^2(0,T;H^{-1}(\Omega))}+\norm{g}_{\Omega}\right),
\]
where the constant~$C$ depends only on $\nu$, $L_H$, $d$, $T$, and the shape-regularity of the meshes. We therefore obtain~\eqref{eq:KFP_apriori_bound} after noting that $\gamma_k\leq \gamma_1<1$ by~\eqref{time-step-size} and using the equivalence of norms of Corollary~\ref{cor:equiv_norms}.}
\hfill\proofbox

\paragraph{Proof of Lemma~\ref{HJB_wellposedness_discrete}}
{Let $k\in\N$, $\widetilde{F} \in L^2(0,T;H^{-1}(\Omega))$ and let $\widetilde{S}\in L^2(\Omega)$ be fixed. 
For each $w\in\Vk$, let $\Gamma_k(w)\in \Vk$ be defined as the unique solution of
\begin{equation}\label{eq:discrete_HJB_wellposedness_1}
B_k(\psi,\Gamma_k(w)) = \int_0^T\langle \widetilde{F},\psi\rangle  - (H[\nabla w],\psi)_\Omega\mathrm{d}t +(R_k\widetilde{S},\psi(T))_{\Omega,k}\quad \forall \psi\in\Vkpo,
\end{equation}
where $B_k$ is defined in~\eqref{b-bilinear-form} above.
It is clear that $\Gamma_k\colon \Vk\tends \Vk$ is well-defined, and moreover it is clear that $\overline{u}\in\Vk$ solves~\eqref{HJB_eqn_discrete} if and only if $\overline{u}$ is a fixed point of of $\Gamma_k$, i.e. $\overline{u}=\Gamma_k(\overline{u})$. We now show that $\Gamma_k$ is a contraction on $\Vk$ with respect to the norm $\norm{\cdot}_{\Xk}$, which implies existence and uniqueness of the the solution $\overline{u}$ of~\eqref{HJB_eqn_discrete} by the Banach Fixed Point Theorem. To see that $\Gamma_k$ is a contraction map on $\Vk$, let $w_1,\,w_2 \in \Vk$ be arbitrary, and note that~\eqref{XY-inf-sup} implies that
\begin{equation}\label{eq:discrete_HJB_wellposedness_2}
\norm{\Gamma_k(w_1)-\Gamma_k(w_2)}_{\Xk} = \sup_{\psi\in\Vkpo}\frac{\int_0^T (H[\nabla w_2]-H[\nabla w_1],\psi)_\Omega\mathrm{d}t}{\norm{\psi}_{\Yk}}.
\end{equation}
The Cauchy--Schwarz inequality then implies that, for any $\psi\in\Vkpo$,
\begin{multline}\label{eq:discrete_HJB_wellposedness_3}
\int_0^T(H[\nabla w_2]-H[\nabla w_1],\psi)_\Omega\mathrm{d}t \\ \leq \left(\int_0^T \nu L_H^{-2}\tak^{-1} \norm{H[\nabla w_2]-H[\nabla w_1]}_\Omega^2\mathrm{d}t\right)^{\frac{1}{2}}\left(\int_0^T\nu^{-1}L_H^2\tak\norm{\psi}_\Omega^2\mathrm{d}t\right)^\frac{1}{2}.
\end{multline}
We then see that $\left(\int_0^T \nu L_H^{-2}\tak \norm{H[\nabla w_2]-H[\nabla w_1]}_\Omega^2\mathrm{d}t\right)^{\frac{1}{2}}\leq \norm{w_1-w_2}_{\Xk}$ since $H$ is Lipschitz continuous by~\eqref{eq:Lipschitz_H} and since $\nu\mathbb{I}\leq A_k$. Furthermore, Lemma~\ref{tech-result-discrete} applied to $\psi \in \Vkpo$ implies that $\left(\int_0^T\nu^{-1}L_H^2\tak\norm{\psi}_\Omega^2\mathrm{d}t\right)^\frac{1}{2}\leq \gamma_k \norm{\psi}_{\Yk}$.
 Thus we find that
\begin{equation}
\norm{\Gamma_k(w_1)-\Gamma_k(w_2)}_{\Xk} \leq \gamma_k \norm{w_1-w_2}_{\Xk},
\end{equation}
which shows that $\Gamma_k$ is a contraction on $\Vk$, since $\gamma_k\leq \gamma_1<1$ by \eqref{time-step-size}.

It remains only to show the continuous dependence bound~\eqref{HJB_cont_dep_discrete}, where $\overline{u}_1$ and $\overline{u}_2$ are respective solutions of~\eqref{HJB_eqn_discrete} with respective data $(\widetilde{F}_1,\widetilde{S}_1)$ and $(\widetilde{F}_2,\widetilde{S}_2)$. Then, we have
\[
B_k(\psi,u_1-u_2) = \int_0^T  (H[\nabla u_2]-H[\nabla u_1],\psi)_\Omega + \langle \widetilde{F}_1-\widetilde{F}_2,\psi\rangle \mathrm{d}t + (R_k\widetilde{S}_1-R_k\widetilde{S}_2,\psi(T))_{\Omega,k},
\]
for all $\psi\in\Vkpo$.
Combining Lemma~\ref{lem:vkp_infty_bound}, the equivalence of norms $\norm{\cdot}_{\Yk}\eqsim \norm{\cdot}_{\Vkp}$, the $L^2$-stability of $R_k$, and Lemma~\ref{tech-result-discrete}, we obtain
\begin{equation}\label{eq:discrete_HJB_wellposedness_4}
\norm{u_1-u_2}_{\Xk} \leq \gamma_k\norm{u_1-u_2}_{\Xk} + C \left(\norm{\widetilde{F}_1-\widetilde{F}_2}_{L^2(0,T;H^{-1}(\Omega))}+\norm{\widetilde{S}_1-\widetilde{S}_2}_{\Omega} \right),
\end{equation}
where $C$ is a constant that depends on $d$, $\nu$, $L_H$, $T$ and the shape-regularity of the meshes. Therefore, $\norm{u_1-u_2}_{\Xk}\lesssim \norm{\widetilde{F}_1-\widetilde{F}_2}_{L^2(0,T;H^{-1}(\Omega))}+\norm{\widetilde{S}_1-\widetilde{S}_2}_{\Omega}$ since $\gamma_k\leq \gamma_1<1$ by~\eqref{time-step-size}.

Next, since $u_1$ and $u_2$ are extended to functions in $\Vkm$ with $u_1(T)=R_k \widetilde{S}_1$ and $u_2(T)=R_k \widetilde{S}_2$, the discrete integration-by-parts formula~\eqref{eq:discrete_ibp} implies that
\begin{multline}\label{eq:discrete_HJB_wellposedness_5}
\int_0^T -(\p_t \In (u_1-u_2),\psi)_{\Omk} \mathrm{d}t = \int_0^T (H[\nabla u_2]-H[\nabla u_1],\psi)_\Omega + \langle \widetilde{F}_1-\widetilde{F}_2,\psi\rangle \mathrm{d}t 
\\- \int_0^T (A_k\nabla(u_1-u_2),\nabla \psi)_\Omega\mathrm{d}t ,
\end{multline}
which extends to all test functions $\psi\in \Vk$.
Therefore $\int_0^T \norm{\p_t \In (u_1-u_2)}_{\dualk}^2\mathrm{d}t\lesssim \norm{u_1-u_2}^2_{\Xk}+\norm{\widetilde{F}_1-\widetilde{F}_2}^2_{L^2(0,T;H^{-1}(\Omega))}$.  Combining this bound with  $\norm{u_1(T)-u_2(T)}_\Omega\lesssim \norm{\widetilde{S}_1-\widetilde{S}_2}_\Omega$, we conclude that $\norm{u_1-u_2}_{\Vkm}\lesssim \norm{\widetilde{F}_1-\widetilde{F}_2}_{L^2(0,T;H^{-1}(\Omega))}+\norm{\widetilde{S}_1-\widetilde{S}_2}_{\Omega}$ which completes the proof of~\eqref{HJB_cont_dep_discrete}.}
	\hfill\proofbox

\bibliographystyle{siamplain}
\bibliography{p-2-references-revision}

\begin{thebibliography}{10}

\bibitem{achdou2013mean}
{\sc Y.~Achdou, F.~Camilli, and I.~Capuzzo-Dolcetta}, {\em Mean field games:
  Convergence of a finite difference method}, SIAM Journal on Numerical
  Analysis, 51 (2013), pp.~2585--2612, \url{https://doi.org/10.1137/120882421}.

\bibitem{achdou2010mean}
{\sc Y.~Achdou and I.~Capuzzo-Dolcetta}, {\em Mean field games: Numerical
  methods}, SIAM Journal on Numerical Analysis, 48 (2010), pp.~1136--1162,
  \url{https://doi.org/10.1137/090758477}.

\bibitem{achdou2020mean}
{\sc Y.~Achdou, P.~Cardaliaguet, F.~Delarue, A.~Porretta, and F.~Santambrogio},
  {\em Mean Field Games}, vol.~2281, Springer, 2019,
  \url{https://doi.org/10.1007/978-3-030-59837-2}.

\bibitem{achdou2016convergence}
{\sc Y.~Achdou and A.~Porretta}, {\em Convergence of a finite difference scheme
  to weak solutions of the system of partial differential equations arising in
  mean field games}, SIAM Journal on Numerical Analysis, 54 (2016),
  pp.~161--186, \url{https://doi.org/10.1137/15M1015455}.

\bibitem{aronson1967local}
{\sc D.~G. Aronson and J.~Serrin}, {\em Local behavior of solutions of
  quasilinear parabolic equations}, Archive for Rational Mechanics and
  Analysis, 25 (1967), pp.~81--122.

\bibitem{MR0755330}
{\sc J.-P. Aubin and A.~Cellina}, {\em Differential inclusions}, vol.~264 of
  Grundlehren der mathematischen Wissenschaften [Fundamental Principles of
  Mathematical Sciences], Springer-Verlag, Berlin, 1984.
\newblock Set-valued maps and viability theory.

\bibitem{BardiFischer19}
{\sc M.~Bardi and M.~Fischer}, {\em On non-uniqueness and uniqueness of
  solutions in finite-horizon mean field games}, ESAIM Control Optim. Calc.
  Var., 25 (2019), pp.~Paper No. 44, 33,
  \url{https://doi.org/10.1051/cocv/2018026}.

\bibitem{barrenechea2017edge}
{\sc G.~R. Barrenechea, E.~Burman, and F.~Karakatsani}, {\em Edge-based
  nonlinear diffusion for finite element approximations of convection-diffusion
  equations and its relation to algebraic flux-correction schemes}, Numer.
  Math., 135 (2017), pp.~521--545,
  \url{https://doi.org/10.1007/s00211-016-0808-z}.

\bibitem{barrenechea2018unified}
{\sc G.~R. Barrenechea, V.~John, P.~Knobloch, and R.~Rankin}, {\em A unified
  analysis of algebraic flux correction schemes for convection-diffusion
  equations}, SeMA J., 75 (2018), pp.~655--685,
  \url{https://doi.org/10.1007/s40324-018-0160-6}.

\bibitem{bonnans2022error}
{\sc J.~F. Bonnans, K.~Liu, and L.~Pfeiffer}, {\em Error estimates of a
  theta-scheme for second-order mean field games}, ESAIM Math. Model. Numer.
  Anal., 57 (2023), pp.~2493--2528, \url{https://doi.org/10.1051/m2an/2023059}.

\bibitem{AriasKaliseSilva2018}
{\sc L.~M. Brice\~{n}o Arias, D.~Kalise, and F.~J. Silva}, {\em Proximal
  methods for stationary mean field games with local couplings}, SIAM J.
  Control Optim., 56 (2018), pp.~801--836,
  \url{https://doi.org/10.1137/16M1095615},
  \url{https://doi.org/10.1137/16M1095615}.

\bibitem{burman2002nonlinear}
{\sc E.~Burman and A.~Ern}, {\em Nonlinear diffusion and discrete maximum
  principle for stabilized {G}alerkin approximations of the
  convection--diffusion-reaction equation}, Computer Methods in Applied
  Mechanics and Engineering, 191 (2002), pp.~3833--3855,
  \url{https://doi.org/10.1016/S0045-7825(02)00318-3}.

\bibitem{burman2005stabilized}
{\sc E.~Burman and A.~Ern}, {\em Stabilized {G}alerkin approximation of
  convection-diffusion-reaction equations: discrete maximum principle and
  convergence}, Math. Comp., 74 (2005), pp.~1637--1652,
  \url{https://doi.org/10.1090/S0025-5718-05-01761-8}.

\bibitem{CarliniSilva14}
{\sc E.~Carlini and F.~J. Silva}, {\em A fully discrete semi-{L}agrangian
  scheme for a first order mean field game problem}, SIAM J. Numer. Anal., 52
  (2014), pp.~45--67, \url{https://doi.org/10.1137/120902987}.

\bibitem{CarliniSilve15}
{\sc E.~Carlini and F.~J. Silva}, {\em A semi-{L}agrangian scheme for a
  degenerate second order mean field game system}, Discrete Contin. Dyn. Syst.,
  35 (2015), pp.~4269--4292, \url{https://doi.org/10.3934/dcds.2015.35.4269}.

\bibitem{Ciarlet1978}
{\sc P.~G. Ciarlet}, {\em The finite element method for elliptic problems},
  vol.~40 of Classics in Applied Mathematics, Society for Industrial and
  Applied Mathematics (SIAM), Philadelphia, PA, 2002,
  \url{https://doi.org/10.1137/1.9780898719208}.

\bibitem{ciarlet1973maximum}
{\sc P.~G. Ciarlet and P.-A. Raviart}, {\em Maximum principle and uniform
  convergence for the finite element method}, Computer methods in applied
  mechanics and engineering, 2 (1973), pp.~17--31,
  \url{https://doi.org/10.1016/0045-7825(73)90019-4}.

\bibitem{ducasse2020second}
{\sc R.~Ducasse, G.~Mazanti, and F.~Santambrogio}, {\em Second order local
  minimal-time {M}ean {F}ield {G}ames}, Nonlinear Differ. Equ. Appl., 29
  (2022), \url{https://doi.org/10.1007/s00030-022-00767-2}.

\bibitem{fattorini1999infinite}
{\sc H.~O. Fattorini}, {\em Infinite dimensional optimization and control
  theory}, vol.~54, Cambridge University Press, 1999.

\bibitem{gomes2016regularity}
{\sc D.~A. Gomes, E.~A. Pimentel, and V.~Voskanyan}, {\em Regularity theory for
  mean-field game systems}, Springer, 2016,
  \url{https://doi.org/10.1007/978-3-319-38934-9}.

\bibitem{GomesSaude2014}
{\sc D.~A. Gomes and J.~a. Sa\'{u}de}, {\em Mean field games models---a brief
  survey}, Dyn. Games Appl., 4 (2014), pp.~110--154,
  \url{https://doi.org/10.1007/s13235-013-0099-2}.

\bibitem{GueantLasryLions2003}
{\sc O.~Gu\'{e}ant, J.-M. Lasry, and P.-L. Lions}, {\em Mean field games and
  applications}, in Paris-{P}rinceton {L}ectures on {M}athematical {F}inance
  2010, vol.~2003 of Lecture Notes in Math., Springer, Berlin, 2011,
  pp.~205--266, \url{https://doi.org/10.1007/978-3-642-14660-2\_3}.

\bibitem{huang2006large}
{\sc M.~Huang, R.~P. Malham{\'e}, and P.~E. Caines}, {\em Large population
  stochastic dynamic games: closed-loop {M}c{K}ean-{V}lasov systems and the
  {N}ash certainty equivalence principle}, Communications in Information \&
  Systems, 6 (2006), pp.~221--252.

\bibitem{kuratowski1965general}
{\sc K.~Kuratowski and C.~Ryll-Nardzewski}, {\em A general theorem on
  selectors}, Bull. Acad. Polon. Sci. S{\'e}r. Sci. Math. Astronom. Phys, 13
  (1965), pp.~397--403.

\bibitem{ladyvzenskaja1968linear}
{\sc O.~A. Lady\v{z}enskaja, V.~A. Solonnikov, and N.~N. Uralceva}, {\em Linear
  and quasilinear equations of parabolic type.}, American Mathematical Society,
  Providence, R.I., 1968.
\newblock Translated from the Russian by S. Smith.

\bibitem{lasry2006jeux1}
{\sc J.-M. Lasry and P.-L. Lions}, {\em Jeux {\`a} champ moyen. {I}--{L}e cas
  stationnaire}, Comptes Rendus Math{\'e}matique, 343 (2006), pp.~619--625,
  \url{https://doi.org/10.1016/j.crma.2006.09.019}.

\bibitem{lasry2006jeux}
{\sc J.-M. Lasry and P.-L. Lions}, {\em Jeux {\`a} champ moyen. {II}--{H}orizon
  fini et contr{\^o}le optimal}, Comptes Rendus Math{\'e}matique, 343 (2006),
  pp.~679--684, \url{https://doi.org/10.1016/j.crma.2006.09.018}.

\bibitem{lasry2007mean}
{\sc J.-M. Lasry and P.-L. Lions}, {\em Mean field games}, Jpn. J. Math., 2
  (2007), pp.~229--260, \url{https://doi.org/10.1007/s11537-007-0657-8}.

\bibitem{MIZUKAMI1985181}
{\sc A.~Mizukami and T.~J. Hughes}, {\em A {P}etrov-{G}alerkin finite element
  method for convection-dominated flows: {A}n accurate upwinding technique for
  satisfying the maximum principle}, Computer Methods in Applied Mechanics and
  Engineering, 50 (1985), pp.~181--193,
  \url{https://doi.org/10.1016/0045-7825(85)90089-1}.

\bibitem{NeumullerSmears19}
{\sc M.~Neum\"{u}ller and I.~Smears}, {\em Time-parallel iterative solvers for
  parabolic evolution equations}, SIAM J. Sci. Comput., 41 (2019),
  pp.~C28--C51, \url{https://doi.org/10.1137/18M1172466}.

\bibitem{YohancePhD}
{\sc Y.~A.~P. Osborne}, {\em Analysis and numerical approximation of mean field
  game partial differential inclusions}, PhD thesis, University College London,
  2024, \url{https://discovery.ucl.ac.uk/id/eprint/10194241}.

\bibitem{osborne2022analysis}
{\sc Y.~A.~P. Osborne and I.~Smears}, {\em Analysis and numerical approximation
  of stationary second-order mean field game partial differential inclusions},
  SIAM Journal on Numerical Analysis, 62 (2024), pp.~138--166,
  \url{https://doi.org/10.1137/22M1519274}.

\bibitem{osborne2024erratum}
{\sc Y.~A.~P. Osborne and I.~Smears}, {\em Erratum: Analysis and numerical
  approximation of stationary second-order mean field game partial differential
  inclusions}, SIAM Journal on Numerical Analysis, 62 (2024), pp.~2415--2417,
  \url{https://doi.org/10.1137/24M165123X}.

\bibitem{osborne2024near}
{\sc Y.~A.~P. Osborne and I.~Smears}, {\em Near and full quasi-optimality of
  finite element approximations of stationary second-order mean field games},
  arXiv preprint arXiv:2402.00685,  (2024),
  \url{https://doi.org/10.48550/arXiv.2402.00685}.

\bibitem{osborne2024regularization}
{\sc Y.~A.~P. Osborne and I.~Smears}, {\em Regularization of stationary
  second-order mean field game partial differential inclusions}, arXiv preprint
  arXiv:2408.10810,  (2024), \url{https://doi.org/10.48550/arXiv.2408.10810}.

\bibitem{rathgeber2016firedrake}
{\sc F.~Rathgeber, D.~A. Ham, L.~Mitchell, M.~Lange, F.~Luporini, A.~T. McRae,
  G.-T. Bercea, G.~R. Markall, and P.~H. Kelly}, {\em Firedrake: automating the
  finite element method by composing abstractions}, ACM Transactions on
  Mathematical Software (TOMS), 43 (2016), pp.~1--27,
  \url{https://doi.org/10.1145/2998441}.

\bibitem{royden2018real}
{\sc H.~Royden and P.~Fitzpatrick}, {\em Real Analysis}, Pearson Modern
  Classics for Advanced Mathematics Series, Pearson, 2018.

\bibitem{simon1986compact}
{\sc J.~Simon}, {\em Compact sets in the space {$L^p(0,T;B)$}}, Ann. Mat. Pura
  Appl. (4), 146 (1987), pp.~65--96, \url{https://doi.org/10.1007/BF01762360}.

\bibitem{smears2014discontinuous}
{\sc I.~Smears and E.~S\"uli}, {\em Discontinuous {G}alerkin finite element
  approximation of {H}amilton--{J}acobi--{B}ellman equations with {C}ordes
  coefficients}, SIAM Journal on Numerical Analysis, 52 (2014), pp.~993--1016,
  \url{https://doi.org/10.1137/130909536}.

\bibitem{tabata1977finite}
{\sc M.~Tabata}, {\em A finite element approximation corresponding to the
  upwind finite differencing}, Memoirs of Numerical Mathematics, 4 (1977),
  pp.~47--63.

\bibitem{Verfurth2013}
{\sc R.~Verf\"{u}rth}, {\em A posteriori error estimation techniques for finite
  element methods}, Numerical Mathematics and Scientific Computation, Oxford
  University Press, Oxford, 2013,
  \url{https://doi.org/10.1093/acprof:oso/9780199679423.001.0001}.

\bibitem{xu1999monotone}
{\sc J.~Xu and L.~Zikatanov}, {\em A monotone finite element scheme for
  convection-diffusion equations}, Mathematics of Computation, 68 (1999),
  pp.~1429--1446, \url{https://doi.org/10.1090/S0025-5718-99-01148-5}.

\bibitem{yosida1980functional}
{\sc K.~Yosida}, {\em Functional analysis}, Springer-Verlag, Berlin-New York,
  sixth~ed., 1980.

\bibitem{ZMR0816732}
{\sc E.~Zeidler}, {\em Nonlinear functional analysis and its applications.
  {I}}, Springer-Verlag, New York, 1986.
\newblock Fixed-point theorems, Translated from the German by Peter R. Wadsack.

\end{thebibliography}
\end{document}